 \newcommand\R{\mathord{\mathbb R}}
 \newcommand\C{\mathord{\mathbb C}}
 \newcommand\F{\mathord{\mathbb F}}
 \newcommand\N{\mathord{\mathbb N}}
 \renewcommand\P{\mathord{\mathbb P}}
  \renewcommand{\i}{\mathbf{i}}
  \renewcommand{\r}{\mathbf{r}}
  \newcommand{\e}{\mathbf{e}}
  \newcommand{\f}{\mathbf{f}}
  \newcommand{\n}{\mathbf{n}\mathnormal}
  \renewcommand{\u}{\mathbf{u}}
  \newcommand{\w}{\mathbf{w}}
  \newcommand{\x}{\mathbf{x}}
  \newcommand{\y}{\mathbf{y}}
  \newcommand{\z}{\mathbf{z}}
  \newcommand{\0}{\mathbf{0}}
 \renewcommand\f{{\bf f}\mathnormal}
 \newcommand\bF{{\bf F}}
 \newcommand\bH{\mathbf {H}}
 \newcommand\cS{{\cal S}}
 \newcommand\cT{{\cal T}}
 \newcommand\cY{{\cal Y}}
 \newcommand\rS{{\rm S}}
  \newcommand{\lan}{\langle}
  \newcommand{\ran}{\rangle}
  \newcommand{\an}[1]{\lan#1\ran}
  \def\diag{\mathop{{\rm diag}}\nolimits}
  \newcommand{\hs}{\hspace*{\parindent}}
  \newcommand{\trans}{^\top}
  \newcommand{\qed}{\hspace*{\fill} $\Box$\\}
  \newcommand{\dist}{\mathrm{dist}}
  \renewcommand{\rS}{\mathrm{S}}
  \newtheorem{theo}{\bfseries \hs Theorem}[section]
  \newtheorem{defn}[theo]{\bfseries \hs Definition}
  \newtheorem{lemma}[theo]{\bfseries \hs Lemma}
  \newtheorem{corol}[theo]{\bfseries \hs Corollary}
  \newtheorem{con}[theo]{\bfseries \hs Conjecture}
  \numberwithin{equation}{section} 
 \newtheorem{remark}[theo]{\bfseries \hs Remark}
 \newtheorem{theorem}{Theorem}[section]
 \newtheorem{exm}[theorem]{Example}
 \renewcommand\det{{\rm det\;}}
 \renewcommand\dim{{\rm dim\;}}
 \renewcommand\deg{{\rm deg\;}}
\begin{document}

 \title{The geometric measure of entanglement of symmetric $d$-qubits is computable}
 \author{
  Shmuel Friedland and Li Wang\footnotemark[1]
 }
 \renewcommand{\thefootnote}{\fnsymbol{footnote}}

 \footnotetext[1]{
 Department of Mathematics, Statistics and Computer Science,
 University of Illinois at Chicago, Chicago, Illinois 60607-7045,
 USA, \texttt{friedlan@uic.edu,liwang8@uic.edu}.    
 }

 \renewcommand{\thefootnote}{\arabic{footnote}}
 \date{March 4, 2017}
 \maketitle
 \begin{abstract}  We give a simple formula for finding the spectral norm of a $d$-mode symmetric tensor in two variables
 	over the complex or real numbers
 	in terms of the complex or real roots of a corresponding polynomial in one complex variable.  This result implies that the geometric  measure
 	of  entanglement of symmetric $d$-qubits is polynomial-time computable.  We discuss a generalization to  $d$-mode symmetric tensors in more than two variables.

 \end{abstract}

 \noindent \emph{Keywords}: Symmetric tensors, symmetric $d$-qubits, spectral norm, geometric measure of entanglement, computation of spectral norm, anti-fixed and fixed points.

 \noindent {\bf 2010 Mathematics Subject Classification.} 13P15, 15A69, 65H04, 81P40.
\section{Introduction}
The spectral norm of a matrix has numerous applications in pure and applied mathematics.
One of the fundamental reasons for the tremendous use of this norm is that it is polynomial-time computable and the software for its computation is easily available 
on MAPLE, MATHEMATICA, MATLAB and other platforms.

Multiarrays, or $d$-mode tensors, are starting to gain popularity due to data explosion and other applications.  Usually, these problems deal with tensors with real numbers.
Since the creation of quantum mechanics, $d$-mode tensors over complex numbers became the basic tool in treating the $d$-partite states.  Furthermore,
the special case of $d$-partite qubits, viewed as $\otimes^d\C^2$, the tensor product of $d$ copies of $\C^2$, is the basic ingredient in building the quantum computer.  

The spectral norm of tensors is a well defined quantity for tensors over the real or complex numbers denoted as $\R$ and $\C$ respectively.  (In this paper we let $\F$ be either $\R$ or $\C$.) 
For complex valued tensors $\cT$ of Hilbert-Schmidt norm one, the spectral norm measures the geometric measure of entanglement of $\cT$, the most important feature in quantum information theory.
(See \S\ref{sec: spwcnrm}.)
Unlike in the matrix case, the computation of the spectral norm in general can be NP-hard \cite{FLTN16,HL13}.  However, there is a need to compute these norms in special cases
of interesting applications.  Even the simplest case of $d$-partite qubits poses  theoretical and numerical challenges \cite{GFE09}.  This can be partly explained by the fact that 
the space $\otimes^d\C^2$ has dimension $2^d$.

In this paper we mostly restrict ourselves  to $d$-symmetric tensors over $\F^n$, denoted as $\rS^d\F^n$.  The dimension of this space is ${n+d- 1\choose d}={n+d-1\choose n-1}$.  Hence for fixed $n$ this dimension is $O(d^{n-1})$.  In particular, the dimension of $\rS^d\C^2$, the space of symmetric $d$-qubits, is $d+1$.
A symmetric tensor $\cS\in\rS^d\F^n$ can be identified with a homogeneous
polynomial of degree $d$ in $n$ variables over $\F$.  It was already observed by J. J. Sylvester \cite{Syl51} that binary forms, i.e., $n=2$, posses very special properties related to polynomials of one complex variable.  

Chen, Xu and Zhu observed in \cite{CXZ10} that the spectral norm of a symmetric $d$-qubit with nonnegative entries can be computed by finding the corresponding maximal real root
of a certain real polynomial.  In \cite{AMM10} the authors computed numerically the spectral norm of symmetric qubits using Majorana representation combined with analytical and numerical results.  

The main purpose of this paper is to give an analytic expression for the spectral norm of a $d$-symmetric qubit, i.e. $\cS\in\rS^d\C^2$, in terms of the roots of the corresponding
polynomial of one complex variable of degree at most $(d-1)^2+1$, provided that this symmetric qubit is not in the exceptional family.  For the exceptional family of $d$-symmetric qubits, we give a polynomial time approximation algorithm.   If  $\cS$ is real valued then its real spectral norm depends only on the real roots of this polynomial,
or actually, on the real root of another polynomial of degree at most $d+1$.  

Recall that the problem of finding all complex valued roots of univariate polynomials with precision $\varepsilon$ is polynomial-time computable \cite{NR96}.
In particular, we deduce that the geometric measure of entanglement of symmetric $d$-qubits is polynomial-time computable.

In principle we can extend these results to tensors in $\rS^d\C^n$ for $n>2$.  This will require solving a system of polynomial equations in $n$ complex variables, which is a much harder task \cite{BCSS98}.  We conjecture in \S\ref{sec:nge3} that for a fixed $n\ge 3$, for most symmetric $d$-qudits, the spectral norm of $\cS\in\rS^d\C^n$ is polynomial-time computable in $d\in\N$. 

We now survey briefly the contents of our paper. In \S\ref{sec: spwcnrm} we state our notations for tensors.  We recall the definition of the spectral norm of a tensor
$\cT$. We state the well known connection between the notion of the geometric measure of entanglement and the spectral norm of the $d$-partite state. 
In \S\ref{sec:specnrmsym} we discuss the spectral norm of $d$-symmetric tensors on $\F^n$.  We consider a standard orthonormal basis in $\rS^d\C^n$,  the analog of Dicke states in $\rS^d\C^2$ \cite{Dic}, and the entanglement of each element in the basis. 
We give an upper bound on the entanglement of symmetric states in $\rS^d\C^n$.
In \S\ref{sec:critpts}
we recall the remarkable theorem of Banach \cite{Ban38} that characterizes the spectral norm of a symmetric tensor, which was rediscovered a number of times in the mathematical and physical literature \cite{CHLZ12,Fri13,Hubetall09}.  Let $f(\x)$ be a homogeneous polynomial of degree $d$, whose maximum and minimum on the corresponding unit sphere in $\F^n$ gives the spectral norm of the corresponding symmetric tensor.
We show that the critical points of the real part of $\f(\x)$ are anti-fixed and fixed points of the corresponding polynomial maps in $\F^n$.  Using the degree theory we give lower and upper bounds on the number of complex anti-fixed points for nonsingular $\cS\in\rS^d\C^n$.  (The set of singular $\cS\in\rS^d\C^n$  is a variety \cite{FO14}.)  \S\ref{sec:dqubit} is the most important section of this paper.  Here we give a formula to compute the spectral norm of a symmetric tensor $\cS\in\rS^d\F^2$.
This formula depends on the complex or real zeros of a corresponding polynomial of degree at most $(d-1)^2+1$.  In particular, we give a formula
to compute the geometric measure of entanglement of symmetric $d$-qubits states.  Unfortunately, our formula is not applicable for a special one real parameter family.
The analysis of this family and the computation of the spectral norm of the corresponding symmetric tensors $\cS\in\rS^d\C^2$ within $\varepsilon$ precision is given in \S\ref{sec:excepcase}. 
In \S\ref{sec:nge3} we discuss briefly the complexity of finding the spectral norm of $\cS\in\rS^d\C^n$ for a fixed $n\ge 3$ and arbitrary $d$.  We give some evidence for our conjecture that for a nonsingular $\cS$ the computation of the spectral norm of $\cS$ within precision $\varepsilon>0$ is polynomial in $d$.
Appendix \ref{sec:examples} gives numerical examples of our method for calculating the spectral norm of $\cS\in\rS^d\F^2$.


\section{Spectral norm and entanglement}\label{sec: spwcnrm}
For a positive integer $d$, i.e., $d\in\N$, we denote by $[d]$ the set of consecutive integers $\{1,\ldots,d\}$.
Let $\F\in\{\R,\C\}$, $\n=(n_1,\ldots,n_d)\in\N^d$.  We will identify the tensor product space
$\otimes_{i=1}^d \F^{n_i}$ with the space of $d$-arrays $\F^{\mathbf{n}}$.
The entries of $\cT\in\F^{\mathbf{n}}$ 
are denoted as $\cT_{i_1,\ldots,i_d}$.
So $d=1,d=2$ and $d\ge 3$ are called vectors, matrices and tensors respectively.   Note that the dimension of $\F^{\mathbf{n}}$ is $N(\n)=n_1\cdots n_d$. 

Assume that $d\in\N$ and $k\in [d]$.
Suppose that $\mathbf{m}=(m_1,\ldots,m_k)$ is obtained by deleting some coordinates of $\mathbf{n}$.  That is, $\mathbf{m}=(n_{l_1},\ldots,n_{l_k})$, where
$1\le l_1<\ldots<l_k\le d$.   Denote by $\mathbf{m}'\in \N^{d-k}$ the vector obtained from $\mathbf{n}$ by deleting the coordinates $l_1,\ldots,l_k$.
That is $\mathbf{m}'=(n_{l_1'},\ldots,n_{l_{d-k}'})$, where $1\le l_1'<\cdots<l_{d-k}'\le d$.  (It is possible that $d-k=0$.)
Denote by $\cS\times \cT=\cT\times \cS$ the multiarray in $\F^{\mathbf{m}'}$ obtained by the contraction on the indices $i_{l_1},\ldots,i_{l_k}$:
\[(\cT\times \cS)_{i_{l_1'},\ldots,i_{l'_{d-k}}}=\sum_{i_{l_1}\in[n_{l_1}],\ldots,i_{l_k}\in[n_{l_k}]} \cT_{i_1,\ldots,i_d}\cS_{i_{l_1},\ldots,i_{l_d}}.\]
The inner product on $\F^{\mathbf{n}}$ is given as $\an{\cS,\cT}:=\cS\times \overline{\cT}$, where $\overline{\cT}=[\overline{\cT_{i_1,\ldots,i_d}}]$.
Furthermore, $\|\cS\|=\sqrt{\an{\cS,\cS}}$ is the Hilbert-Schmidt norm of $\cS$.  Assume that $\x_i=(x_{1,i},\ldots,x_{n_i,i})\trans\in\F^{n_i}$ for $i\in[d]$.
Then $\otimes_{i=1}^d \x_i$ is a tensor in $\F^{\mathbf{n}}$, with the entries $(\otimes_{i=1}^d \x_i)_{i_1,\ldots,i_d}=x_{i_1,1}\cdots x_{i_d,d}$.
($\otimes_{i=1}^d \x_i$ is called a rank one tensor if all $\x_i\ne \0$.)

Denote the unit sphere in $\F^n$ by $\rS(n,\F)=\{\x\in\F^n, \|\x\|=1\}$. Recall that the spectral norm of $\cT\in \F^{\mathbf{n}}$ is given as
\begin{equation}\label{specnrmdef}
\|\cT\|_{\sigma,\F}=\max\{|\cT\times \otimes_{i=1}^d \x_i|, \; \x_i\in \rS(n_i,\F) \textrm{ for } i\in[d]\}.
\end{equation}
Unlike in the matrix case, for a real tensor $\cT\in \R^{\mathbf{n}}$ it is possible that $\|\cT\|_{\sigma,\R}<\|\cT\|_{\sigma,\C}$ \cite{FLTN16}.
For simplicity of notation we will let $\|\cT\|_{\sigma}$ denote $\|\cT\|_{\sigma,\C}$, and no ambiguity will arise.

A  standard way to compute the spectral norm of $\cT$ is an alternating maximization in (\ref{specnrmdef}) by maximizing each time with respect to
a different variable \cite{LMV00}.  Other variants of this method is maximization on two variables using the SVD algorithms \cite{FMPS13}, or the
Newton method \cite{FT15,Tong}.  These methods in the best case yield a convergence to a local maximum, which provide a lower bound
to $\|\cT\|_{\sigma,\F}$.  Semidefinite relaxation methods, as in \cite{Nie14}, will yield an upper bound to $\|\cT\|_{\sigma,\F}$, which will converge in some cases to $\|\cT\|_{\sigma,\F}$.

Recall that in quantum physics $\cT\in\C^{\mathbf{n}}$ is called a state if $\|\cT\|=1$.  Furthermore, all tensors of the form $\zeta\cT$, where $\|\cT\|=1$ and 
$\zeta\in\C,|\zeta|=1$ are viewed as the same state.  That is, the space of the states in $\C^{\mathbf{n}}$ is the quotient space $\rS(N(\n),\C)/\rS(1,\C)$.  Denote by $\Pi^{\mathbf{n}}$ the product states in $\C^{\mathbf{n}}$:
\[\Pi^{\mathbf{n}}=\{\otimes_{i=1}^d \x_i,\; \x_i\in\rS(n_i,\C),i\in[d]\}.\]

The geometric measure of entanglement of a state $\cT\in\C^{\mathbf{n}}$ is 
\[\dist(\cT,\Pi^{\mathbf{n}})=\min_{\cY\in\Pi^{\mathbf{n}}} \|\cT-\cY\|.\]
As $\|\cT\|=\|\cY\|=1$ it follows that $\dist(\cT,\Pi^{\mathbf{n}})=\sqrt{2(1-\|\cT\|_{\sigma})}$.  Hence an equivalent measurement of entanglement is \cite{GFE09}
\begin{equation}\label{defetaT}
\eta(\cT)=-\log_2 \|\cT\|_\sigma^2.
\end{equation}
The maximal entanglement is
\begin{equation}\label{maxentn}
\eta(\n)=\max_{\cT\in\C^{\mathbf{n}}, \|\cT\|=1} -\log_2\|\cT\|_\sigma^2.
\end{equation}
See \cite{DFL} for other measurements of entanglement using the nuclear norm of $\cT$.  Lemma 9.1 in \cite{FLTN16} implies
\[\eta(\n)\le \log_2 N(\n).\]

Let $n^{\times d}=(n,\ldots,n)\in\N^d$.
For $n=2$ we get that $\eta(2^{\times d})\le d$.  In \cite{Jungetall08} it is shown that $\eta(2^{\times d})\le d-1$.
On the other hand, it is shown in \cite{GFE09} that $\eta(\cT)\ge d - 2\log_2 d-2$
for the set of states of Haar measure at least $1-e^{-d^2}$ on the sphere $\|\cT\|=1$ in $\otimes^d\C^{2}$.

A tensor $\cS=[\cS_{i_1,\ldots,i_d}]\in\otimes^d\F^n$ is called symmetric if $\cS_{i_1,\ldots,i_d}=
\cS_{i_{\omega(1)},\ldots,i_{\omega(d)}}$ for every permutation $\omega:[d]\to[d]$.  
Denote by $\rS^d\F^n\subset \otimes^d\F^n$ the vector space of $d$-mode symmetric tensors on $\F^n$.
In what follows we assume that $\cS$ is a symmetric tensor and $d\ge 2$, unless stated otherwise.  A tensor
$\cS\in\rS^d\F^n$ defines a unique homogeneous polynomial of degree $d$ in $n$ variables
\begin{equation}\label{defpolfx}
f(\x)=\cS\times\otimes^d\x=\sum_{0\le j_k\le d,k\in[n], j_1+\cdots +j_n=d} \frac{d!}{j_1!\cdots j_n!} f_{j_1,\ldots,j_n} x_1^{j_1}\cdots x_n^{j_n}.
\end{equation}
Conversely, a homogeneous polynomial $f(\x)$ of degree $d$ in $n$ variables defines a unique symmetric $\cS\in\rS^d\F^n$ by the following relation.
Consider the multiset $\{i_1,\ldots,i_d\}$, where each $i_l\in [n]$.  Let $j_k$ be the number of times the integer $k\in [n]$ appears in the multiset  $\{i_1,\ldots,i_d\}$.
Then $\cS_{i_1,\ldots,i_d}=f_{j_1,\ldots,j_n}$.  Furthermore
\begin{equation}\label{symtenhsnorm}
\|\cS\|^2=\sum_{0\le j_k\le d,k\in[n],j_1+\cdots + j_n=d}\frac{d!}{j_1!\cdots j_n!} |f_{j_1,\ldots,j_n}|^2,
\end{equation}
where $\cS_{i_1,\ldots,i_d}=f_{j_1,\ldots,j_n}$.  
\section{Standard basis of symmetric tensors and their entanglement}\label{sec:specnrmsym}
\begin{defn}\label{defJdnDIc}

\item Denote by $J(d,n)$ be the set of all $n$-tuples $\{j_1,\ldots,j_n\}$ appearing in \eqref{defpolfx}:
	\begin{equation}\label{defJdn}
	J(d,n)= \{\{j_1,\ldots,j_n\},\; j_k\in\{0,1,\ldots,d\} \textrm{ for } k\in [n], j_1+\cdots+j_n=d\}.
	\end{equation}
\begin{enumerate}
\item For each $\{j_1,\ldots,j_n\}\in J(d,n)$ let $\cS(j_1,\ldots,j_n)\in\rS^d\C^n$ be the following symmetric tensor with entries $\cS_{i_1,\ldots,i_d}$ for $i_1,\ldots,i_d\in[n]$:
		The entry $\cS_{i_1,\ldots,i_d}=\sqrt{\frac{j_1!\cdots j_n!}{d!}}$ if $k$ appears 
		$j_k$ times in the multiset $\{i_1,\ldots,i_d\}$ for each $k\in[n]$.   Otherwise $\cS_{i_1,\ldots,i_d}=0$. 
\item  Let 
		\begin{eqnarray}\label{defSdn}
		&&\cS(d,n)=\cS(j_1,\ldots,j_n), \textrm{ where}\\ 
		&&j_1=\cdots =j_l=\lfloor\frac{d}{n}\rfloor,\;j_{l+1}=\cdots = j_{n}=\lceil\frac{d}{n}\rceil, \;l=n\lceil\frac{d}{n}\rceil - d.\label{defj1jn}
		\end{eqnarray}
\end{enumerate}		
\end{defn}	
In the following lemma we show that the set of the above vectors $\cS(j_1,\ldots,j_n)$ is an orthonormal basis for $\rS^d\F^n$. 
We call this basis a \emph{standard} basis of symmetric tensors.	
For $n=2$ the standard basis of symmetric tensors is called  the Dicke basis \cite{Dic}. 
\begin{lemma}\label{lem:Sdn} Assume that $n,d\ge 2$ are two positive integers.  
	\begin{enumerate}
		\item Denote by $|J(d,n)|$ the cardinality of the set $J(d,n)$.  Then $|J(d,n)|={n+d-1\choose n-1}$.  Furthermore, $\dim\rS^d\F^n= |J(d,n)|$.
	\item The set $\cS(j_1,\ldots,j_n), \{j_1,\ldots,j_d\}\in J(d,n)$ is an orthonormal basis for $\rS^d\F^n$. 		
\end{enumerate}
\end{lemma}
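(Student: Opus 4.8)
The plan is to prove the two assertions in turn, both resting on the standard ``stars and bars'' correspondence between monomials of degree $d$ in $n$ variables and multisets of size $d$ drawn from $[n]$. For part (1), I would first note that an element $\{j_1,\ldots,j_n\}\in J(d,n)$ is just an ordered $n$-tuple of nonnegative integers with $j_1+\cdots+j_n=d$; encoding such a tuple as a binary string of $d$ ones and $n-1$ zeros gives the bijection proving $|J(d,n)|=\binom{n+d-1}{n-1}$. For the dimension claim I would invoke the linear bijection $\cS\leftrightarrow f(\x)$ of \eqref{defpolfx}: the map sending $\cS\in\rS^d\F^n$ to its coefficient family $(f_{j_1,\ldots,j_n})_{\{j_1,\ldots,j_n\}\in J(d,n)}$ is linear and invertible, because a symmetric tensor is determined by—and can be prescribed freely on—the multiplicity classes of its index tuples, and these classes correspond bijectively to the elements of $J(d,n)$. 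Hence $\dim\rS^d\F^n=|J(d,n)|=\binom{n+d-1}{n-1}$.

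For part (2) I would argue in three short steps. \emph{Orthogonality}: by Definition \ref{defJdnDIc}, $\cS(j_1,\ldots,j_n)$ is supported exactly on those tuples $(i_1,\ldots,i_d)\in[n]^d$ whose multiplicity vector is $(j_1,\ldots,j_n)$, and distinct elements of $J(d,n)$ yield disjoint such supports, so $\an{\cS(j_1,\ldots,j_n),\cS(j_1',\ldots,j_n')}=0$ when $\{j_1,\ldots,j_n\}\neq\{j_1',\ldots,j_n'\}$. \emph{Normalization}: the number of ordered tuples in $[n]^d$ with multiplicity vector $(j_1,\ldots,j_n)$ is the multinomial coefficient $\frac{d!}{j_1!\cdots j_n!}$, and on each of them the entry of $\cS(j_1,\ldots,j_n)$ equals $\sqrt{j_1!\cdots j_n!/d!}$, so $\|\cS(j_1,\ldots,j_n)\|^2=\frac{d!}{j_1!\cdots j_n!}\cdot\frac{j_1!\cdots j_n!}{d!}=1$. \emph{Spanning}: being orthonormal, the $\cS(j_1,\ldots,j_n)$ are linearly independent, and by part (1) their number equals $\dim\rS^d\F^n$, so they form a basis; equivalently, reading off \eqref{defpolfx} gives the explicit expansion $\cS=\sum_{\{j_1,\ldots,j_n\}\in J(d,n)}\sqrt{\tfrac{d!}{j_1!\cdots j_n!}}\,f_{j_1,\ldots,j_n}\,\cS(j_1,\ldots,j_n)$, exhibiting an arbitrary $\cS\in\rS^d\F^n$ in the span.

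There is no real obstacle here: the statement is a structural bookkeeping fact. The only point needing care is keeping straight the passage between \emph{ordered} index tuples in $[n]^d$, on which the tensor entries actually live, and the \emph{unordered} multiplicity classes indexed by $J(d,n)$, so that the normalization constant $\sqrt{j_1!\cdots j_n!/d!}$ cancels against the count $\frac{d!}{j_1!\cdots j_n!}$ of tuples in a class. Since this is precisely the normalization already appearing in \eqref{symtenhsnorm}, consistency with that formula serves as a convenient check that the constants are correct.
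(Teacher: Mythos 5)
Your proof is correct and follows essentially the same route as the paper's: the standard stars-and-bars count for $|J(d,n)|$ (the paper phrases it as the shift $i_p'=i_p+p-1$ onto $d$-subsets of $[n+d-1]$, which is the same bijection), and orthonormality by direct inspection of the entries (your disjoint-supports argument plus the cancellation of $\sqrt{j_1!\cdots j_n!/d!}$ against the multinomial count is exactly the content of the paper's appeal to \eqref{symtenhsnorm} and its coefficient formula for $\an{\cS,\cT}$). If anything, you are slightly more explicit than the paper on the two points it dismisses with ``clearly,'' namely that $\dim\rS^d\F^n=|J(d,n)|$ and that the family spans.
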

\begin{proof} \emph{1}.   Consider the homogeneous function $f(\x)$ given by (\ref{defpolfx}).
	Each monomial in $f(\x)$ corresponds exactly to the $n$-tuple $\{j_1,\ldots,j_n\}\in J(d,n)$.
	As pointed out at the beginning of this section each $\{j_1,\ldots,j_n\}$ corresponds to a unique multiset $\{i_1,\ldots,i_d\}$ in the product set $[n]^{\times d}$.  Without loss of generality we can assume that $1\le i_1\le \cdots \le i_d\le n$.  Let $i_p'=i_p+p-1$ for $p\in[d]$.  Then $\{i_1',\ldots,i_d'\}$  is a set of $d$ distinct integers in $[n+d-1]$.  
	The number of such sets is ${n+d-1\choose d}={n+d-1\choose n-1}$.  Hence $|J(d,n)|={n+d-1\choose n-1}$.  Clearly $\dim \rS^d\F^n={n+d-1\choose n-1}$.
	
	\noindent 
	\emph{2}.  The equality (\ref{symtenhsnorm}) yields that $\cS(j_1,\ldots,j_n)$ has norm one.  Clearly the set of symmetric tensors  $\cS(j_1,\ldots,j_n), \{j_1,\ldots,j_n\}\in J(d,n)$ is a basis in $\rS^d\F^n$.  Let $\cT\in\rS^d\F^n$ and assume that
	\[\cT\times \otimes^d\x=\sum_{j_k+1\in[d+1],k\in[n], j_1+\cdots+j_k=d}\frac{d!}{j_1!\cdots j_n!} g_{j_1,\ldots,j_n}x_1^{j_1}\cdots x_n^{j_n}.\]
	Then
	\[\an{\cS,\cT}=\sum_{j_k+1\in[d+1],k\in[n], j_1+\cdots+j_k=d}\frac{d!}{j_1!\cdots j_n!}  f_{j_1,\ldots,j_n}\overline{g_{j_1,\ldots,j_n}}.\]
	Hence $\cS(j_1,\ldots,j_n): \{j_1,\ldots,j_n\}\in J(d,n)$ is an orthonormal basis for $\rS^d\F^n$.\qed
	\end{proof}

In the following lemma we find the entanglement of each $\cS(j_1,\ldots,j_n)$ and the maximum entanglement of these states.
\begin{lemma}\label{lem:Sdn1} Assume that $n,d\ge 2$ are two positive integers.  Then
	\begin{enumerate}
	\item  For each $\{j_1,\ldots,j_n\}\in J(d,n)$ the following equality holds
		\begin{equation}\label{entanfSj1jn}
		\eta(\cS(j_1,\ldots,j_n))=\log_2d^d - \log_2 d! + \sum_{k=1}^n (\log_2 j_k!-\log_2 j_k^{j_k}).
		\end{equation}
		\item 
		\begin{eqnarray}\label{specnrmSdn}
		&&\|\cS(j_1,\ldots,j_n)\|_{\sigma}\ge \|\cS(d,n)\|_{\sigma}=\sqrt{ \frac{ d!\big(\lfloor\frac{d}{n}\rfloor\big)^{l\lfloor\frac{d}{n}\rfloor}\big(\lceil\frac{d}{n}\rceil\big)^{(n-l)\lceil\frac{d}{n}\rceil}}{d^d \big(\lfloor\frac{d}{n}\rfloor !\big)^l\big(\lceil\frac{d}{n}\rceil !\big)^{n-l}}},\\
		\label{etaSdn}
		&&\eta(\cS(j_1,\ldots,j_n))\le \eta(\cS(d,n))=\log_2 \frac{d^d \big(\lfloor\frac{d}{n}\rfloor !\big)^l\big(\lceil\frac{d}{n}\rceil !\big)^{n-l}}{ d!\big(\lfloor\frac{d}{n}\rfloor\big)^{l\lfloor\frac{d}{n}\rfloor}\big(\lceil\frac{d}{n}\rceil\big)^{(n-l)\lceil\frac{d}{n}\rceil}},
		\end{eqnarray}
		for each  $\{j_1,\ldots,j_n\}\in J(d,n)$.
		\item Assume that the integer $n\ge 2$ is fixed and $d\gg 1$.  Then
		\begin{equation}\label{asforetaSdn}
		\eta\big(\cS(d,n)\big)=\frac{1}{2}\big((n-1)\log_2 d - n\log_2 n\big)+O(\frac{1}{d}).
		\end{equation}
	\end{enumerate}
\end{lemma}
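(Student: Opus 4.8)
The plan is to compute $\|\cS(j_1,\ldots,j_n)\|_\sigma$ in closed form, which gives the first assertion via $\eta(\cdot)=-\log_2\|\cdot\|_\sigma^2$; then to optimize the resulting expression over $J(d,n)$ for the second assertion; and finally to feed the optimal value into Stirling's formula for the third.

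\textbf{First assertion.} By the tensor--polynomial correspondence (\ref{defpolfx}), the homogeneous polynomial attached to $\cS(j_1,\ldots,j_n)$ is the single monomial
\[
f(\x)=\cS(j_1,\ldots,j_n)\times\otimes^d\x=\sqrt{\tfrac{d!}{j_1!\cdots j_n!}}\;x_1^{j_1}\cdots x_n^{j_n},
\]
because the only nonzero coefficient $f_{j_1,\ldots,j_n}$ equals $\sqrt{j_1!\cdots j_n!/d!}$. By Banach's theorem (recalled in \S\ref{sec:critpts}, \cite{Ban38}), $\|\cS(j_1,\ldots,j_n)\|_\sigma=\max_{\x\in\rS(n,\C)}|f(\x)|$, so it remains to maximize $|x_1|^{j_1}\cdots|x_n|^{j_n}$ over $\|\x\|=1$. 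Writing $t_k=|x_k|^2\ge 0$ with $\sum_{k=1}^n t_k=1$, the weighted AM--GM inequality (or a Lagrange-multiplier computation) gives $\prod_k t_k^{j_k}\le\prod_k(j_k/d)^{j_k}$, with equality at $t_k=j_k/d$ (convention $0^0=1$). Hence
\[
\|\cS(j_1,\ldots,j_n)\|_\sigma^2=\frac{d!\,\prod_{k=1}^n j_k^{\,j_k}}{d^{d}\,\prod_{k=1}^n j_k!},
\]
and taking $-\log_2$ as in (\ref{defetaT}) yields (\ref{entanfSj1jn}).

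\textbf{Second assertion.} The first assertion gives $\eta(\cS(j_1,\ldots,j_n))=\log_2 d^{d}-\log_2 d!+\sum_{k=1}^n\log_2\rho(j_k)$, where $\rho(m):=m!/m^m$ and $\rho(0):=1$; the first two summands are independent of the tuple, so it suffices to show that $\prod_{k=1}^n\rho(j_k)$ is maximized over $J(d,n)$ at the balanced tuple (\ref{defj1jn}). The key point is that $\rho$ is log-concave on $\{0,1,2,\dots\}$: one computes $\rho(m+1)/\rho(m)=(1+1/m)^{-m}$ for $m\ge 1$ and $\rho(1)/\rho(0)=1$, and since $m\mapsto(1+1/m)^m$ is increasing the ratio $\rho(m+1)/\rho(m)$ is strictly decreasing in $m\ge 0$. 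Consequently, whenever a tuple has entries with $j_a\ge j_b+2$ one has $\rho(j_a-1)\,\rho(j_b+1)>\rho(j_a)\,\rho(j_b)$, i.e.\ transferring one unit from the larger to the smaller entry strictly increases $\prod_k\rho(j_k)$; iterating this move (which strictly decreases $\sum_k j_k^2$ and therefore terminates) one reaches the tuple, unique up to reordering, all of whose entries differ by at most $1$, namely (\ref{defj1jn}). Substituting $j_1=\cdots=j_l=\lfloor d/n\rfloor$ and $j_{l+1}=\cdots=j_n=\lceil d/n\rceil$ into the closed form above gives (\ref{specnrmSdn}) and (\ref{etaSdn}).

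\textbf{Third assertion.} Insert Stirling's expansion $\log_2 m!=m\log_2 m-m\log_2 e+\tfrac12\log_2(2\pi m)+O(1/m)$ into (\ref{etaSdn}), using $\lfloor d/n\rfloor=\lceil d/n\rceil=d/n+O(1)$ and $l=O(1)$. The terms linear in $d$ cancel: the $-m\log_2 e$ pieces coming from $\log_2 d!$ and from $\log_2(\lfloor d/n\rfloor!)$, $\log_2(\lceil d/n\rceil!)$ add up to $0$ because $l\lfloor d/n\rfloor+(n-l)\lceil d/n\rceil=d$. Collecting the surviving logarithmic and constant terms then yields (\ref{asforetaSdn}). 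The only non-routine step is the second assertion, i.e.\ the log-concavity of $m\mapsto m!/m^m$ together with the accompanying exchange argument; the first and third assertions follow directly from Banach's theorem and from Stirling's formula, respectively.
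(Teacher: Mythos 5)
Your proposal follows the paper's own proof essentially step for step: part 1 is the same Lagrange/weighted AM--GM maximization of $|x_1|^{2j_1}\cdots|x_n|^{2j_n}$ on the sphere (the paper also tacitly relies on Banach's theorem \eqref{Banthm}, which you invoke explicitly); part 2 is the same exchange argument, resting on the same inequality $\bigl(1+\tfrac1a\bigr)^a<\bigl(1+\tfrac1{b-1}\bigr)^{b-1}$ for $a\le b-2$ that the paper proves, with your termination remark (monotonicity of $\sum_k j_k^2$) being a small tidying-up that the paper omits; part 3 is Stirling's formula in both cases.

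One caveat on part 3, which you share with the paper rather than introduce: if you actually collect the surviving terms as you describe, you do not land on \eqref{asforetaSdn} but on
\[
\eta\big(\cS(d,n)\big)=\tfrac{1}{2}\big((n-1)\log_2 (2\pi d) - n\log_2 n\big)+O(\tfrac{1}{d}),
\]
i.e.\ there is an additional constant $\tfrac{n-1}{2}\log_2(2\pi)$ coming from the $\tfrac12\log_2(2\pi m)$ factors in Stirling's expansion (the $n$ factors from the $j_k!$ minus the one from $d!$). A numerical check with $n=2$, $d=12$ gives $\eta(\cS(12,2))=\log_2\frac{4096}{924}\approx 2.15$, against $\tfrac12\log_2 12-1\approx 0.79$ from \eqref{asforetaSdn} and $\approx 2.12$ from the corrected formula. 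So your assertion that the computation ``yields \eqref{asforetaSdn}'' is not literally correct; the paper's one-line proof of part 3 has the same defect, and the fix is to the statement, not to your method.
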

\begin{proof} \emph{1}.
	Clearly
	\[|\cS(j_1,\ldots,j_n)\times\otimes^d\x|^2=\frac{d!}{j_1!\cdots j_n!}(|x_1|^2)^{j_1}\cdots (|x_n|^2)^{j_n}.\]
	Use Lagrange multipliers to deduce that the maximum of the above function for $\|\x\|=1$ is achieved at the points $|x_k|^2=\frac{j_k}{j_1+\cdots+j_n}=\frac{j_k}{d}$ for $k\in[n]$.  Hence
	\begin{equation}\label{specnrmSj1jn}
	\|\cS(j_1,\ldots,j_n)\|_{\sigma,\R}^2=\|\cS(j_1,\ldots,j_n)\|_{\sigma}^2=\frac{d!\prod_{k=1}^n j_k^{j_k}}{d^d\prod_{k=1}^n j_k!}.
	\end{equation}
	This establishes (\ref{entanfSj1jn}).
	
	\noindent
	\emph{2}.  Let $a,b$ be nonnegative integers such that $a\le b-2$.  We claim that 
	\[\frac{a!b!}{a^a b^b}< \frac{(a+1)!(b-1)!}{(a+1)^{a+1}(b-1)^{b-1}}.\]
	Indeed, the above inequality is equivalent to 
	\[\frac{a!(a+1)^{a+1}}{a^a(a+1)!}<\frac{(b-1)!b^b}{(b-1)^{b-1}b!}\iff \left(\frac{a+1}{a}\right)^a <\left(\frac{b}{b-1}\right)^{b-1}.\]
	As $0^0=1$ we deduce that the above inequalities hold for $a=0$ and $b\ge 2$.  Assume that $a\ge 1$.
	Then the last inequality in the above displayed inequality is equivalent to the well known statement that the sequence $(1+\frac{1}{m})^m$ is a strictly increasing .  
	
	Consider $\|\cS(j_1,\ldots,j_n)\|^{-2}$.  Suppose that there exists $j_p,j_q$ such that $|j_p-j_q|\ge 2$.  Without loss of generality we may assume that $j_p\le j_q-2$.
	Let $j'_l=j_l$ for $l\in [n]\setminus\{p,q\}$, and $j'_p=j_p+1, j'_q=j_q-1$.  Then the above inequality yields that $\|\cS(j_1,\ldots,j_n)\|^{-2}<\|\cS(j_1',\ldots,j_n')\|^{-2}$.
	Hence the maximum value of $\|\cS(j_1,\ldots,j_n)\|^{-2}$, where $\{j_1,\ldots,j_n\}\in J(d,n)$, is achieved for $\{j_1,\ldots,j_n\}$ satisfying $|j_p-j_q|\le 1$ for all $p,q\in[n]$.  Without loss of generality we can assume that $j_1,\ldots,j_n$ are given by (\ref{defj1jn}).  Hence (\ref{etaSdn}) holds.
	
	\noindent
	\emph{3}.  The equality (\ref{asforetaSdn}) follows from Sterling's formula \cite[p. 52]{Fel58}
	\[k!=\sqrt{2\pi k} k^k e^{-k}e^{\theta_k/12 k}, \quad 0<\theta_k<1.\] 
\qed	
\end{proof}

We now comments on the results given in Lemma \ref{lem:Sdn}.  Parts \emph{1} and \emph{2} are well known.  For $n=2$ Lemma \ref{lem:Sdn} is well known in physics community \cite{AMM10}.  The states $\cS(j_1,j_2)$ are called \emph{Dicke} states.
Note that
\[
\|\cS(3,2)\|_{\sigma}=\frac{2}{3}\approx 0.6667,\;\|\cS(4,2)\|_{\sigma}=\frac{\sqrt{6}}{4}\approx 0.61237,\;\|\cS(5,2)\|_{\sigma}=\frac{6\sqrt{6}}{25}\approx 0.5879.\]
It is known that the most entangled $3$-qubit state with respect to geometric measure is $\cS(3,2)$ \cite{TWP09,CXZ10}.  That is, the spectral norm of a nonsymmetric $3$-qubit
is not less than the spectral norm of $\cS(3,2)$, which is equivalent to the equality
$\eta((2,2,2))=\eta(\cS(3,2))$, see (\ref{maxentn}). However for $d>3$, Lemma \ref{lem:Sdn} shows that the states $\cS(d,2)$ are not the most entangled states in $\rS^d\C^2$.  See examples in \cite{AMM10}, which are also discussed in Appendix \ref{sec:examples}.

Lemma 4.3.1 in \cite{Ren05} yields that
\begin{equation}\label{upbndentsym}
\eta(\cS)\le \log_2 {n+d-1\choose n-1}, \quad \cS\in\rS^d\C^n, \|\cS\|=1.
\end{equation}
(See also \cite{MGBB10}.)   In particular, for $n=2$ we have the inequality:
\begin{equation}\label{qubitsentangi}
\eta(\cS)\le \log_2(d+1) \quad \cS\in\rS^d\C^2, \|\cS\|=1.
\end{equation}

Note that for a fixed $n$ and large $d$ we have the complexity expression
\begin{equation}\label{asymptfornchos}
\log_2 {n+d-1\choose n-1}= (n-1) \log_2 (d+1) +\frac{(n-1)(n-2)}{2\ln 2} - \log_2 (n-1)!+O\left(\frac{1}{d}\right).
\end{equation}

Let 
\begin{equation}\label{defetadn}
\eta_{sym}(d,n)=\max\{\eta(\cS),\; \cS\in\rS^d\C^n, \|\cS\|=1\}.
\end{equation}
Combining the inequality (\ref{upbndentsym})  with \eqref{etaSdn} we obtain
\begin{equation}\label{lowupbdsetadn}
\log_2 \frac{d^d \big(\lfloor\frac{d}{n}\rfloor !\big)^l\big(\lceil\frac{d}{n}\rceil !\big)^{n-l}}{ d!\big(\lfloor\frac{d}{n}\rfloor\big)^{l\lfloor\frac{d}{n}\rfloor}\big(\lceil\frac{d}{n}\rceil\big)^{(n-l)\lceil\frac{d}{n}\rceil}}\le \eta_{sym}(d,n)\le  \log_2 {n+d-1\choose n-1}, \;l=n\lceil\frac{d}{n}\rceil -d.
\end{equation}
In particular
\begin{equation}\label{lowupbdsetad2}
\log_2\frac{d^d\big(\lfloor\frac{d}{2}\rfloor!\big)\big(\lceil\frac{d}{2}\rceil!\big)}{d! \big(\lfloor\frac{d}{2}\rfloor\big)^{\lfloor\frac{d}{2}\rfloor} \big(\lceil\frac{d}{2}\rceil\big)^{\lceil\frac{d}{2}\rceil}}\le \eta_{sym}(d,2)\le \log_2(d+1).
\end{equation}

There is a gap of factor $2$ between the lower and  the upper bounds in  (\ref{lowupbdsetadn}) and (\ref{lowupbdsetad2}) for fixed $n$ and $d\gg 1$.
In \cite{FK16} it is shown that the following inequality holds with respect to the corresponding Haar measure on the unit ball $\|\cS\|=1$ in $\rS^d\C^2$:
\begin{equation}\label{coplexentangsym1}
\Pr(\eta(\cS)\le \log_2 d -\log_2 (\log _2 d) +\log_2 4 -\log_2 5)\le \frac{1}{d^6}.
\end{equation}
This shows that the upper bounds in (\ref{lowupbdsetad2}) have the correct order.  In particular, (\ref{coplexentangsym1}) is the analog of the inequality $\eta(\cT)\ge d - 2\log_2 d-2$
for most $d$-qubit states in \cite{GFE09}. 

We now define a relative entanglement of a symmetric state $\cS\in \rS^d\C^n$, denoted as $\eta_{rel}(\cS)$. The value of $\eta_{rel}(\cS)$  for small values of $d$ should give an idea how entangled is $\cS$, independently of the value of $d$.  Let
\begin{equation}\label{defrelentnagl}
\eta_{rel}(\cS)=-\log_2\|\cS\|_{\sigma}^2 -\log_2 {n+d-1\choose n-1}, \quad \cS\in\rS^d\C^n, \|\cS\|=1.
\end{equation}
In particular, 
\begin{equation}\label{defrelentnagl2}
\eta_{rel}(\cS)=-\log_2\|\cS\|_{\sigma}^2 -\log_2 {(d+1)}, \quad \cS\in\rS^d\C^2, \|\cS\|=1.
\end{equation}
Thus $\eta_{rel}(\cS)\le 0$. The inequality (\ref{coplexentangsym1}) shows that $\eta_{rel}(\cS)\ge -\log_2(\log_2 d)$ for most of $\cS$ for $d\gg 1$ and $n=2$.  
For large values of $d$ we can't rule out that $-\eta_{rel}(\cS)$ can be quite large for most of $\cS$.  

\begin{exm}\label{table:entangl} 
	In this example we give the table of $\eta(\cS)$ and $\eta_{rel}(\cS)$ for $d$ from $3$ to $12$ for the most entangled $\cS\in \rS^d\C^2$, which are known to us. For $d=3$ we let $\cS=\cS(3,2)$.  For $d$ from $4$ to $12$ we choose the states from \cite[\S6]{AMM10}.
	\begin{table}\caption{Computational results for Example \ref{table:entangl}.}
		\label{table:exm:entangl}
		\centering
		\begin{scriptsize}
			\begin{tabular}{|c|l|l||c|l|l|} \hline
				$d$ &    $\eta(\cS)$ & $\eta_{rel}(\cS)$ & $d$ &  $\eta(\cS)$     & $\eta_{rel}(\cS)$  \\ 
				\hline 
				3& 1.1699  &  -0.8301 & 8 & 2.45    &-0.7199    \\ \hline              
				4 & 1.5850     & -0.7370  & 9 & 2.554  & -0.7679   \\  \hline   
				5 & 1.74     & -0.8450  &10& 2.7374& -0.7220      \\  \hline 
				6 & 2.1699   &  -0.6374 &11&  2.83   & -0.7550 \\  \hline 
				7 & 2.299   &  -0.701   &12&  3.1175& -0.5829     \\  \hline   
			\end{tabular} 
		\end{scriptsize}
	\end{table} 
\end{exm} 

This table shows that the relative entanglement of the most entangled symmetric states known to us for $d$ from $3$ to $12$ has relatively small variation,
to compare with the variation of the entanglement of these states.
\section{Critical points of $\Re (\cS\times\otimes^d \x)$ on $\rS(n,\F)$ }\label{sec:critpts}
Recall that $\cS\in\rS^d\C^n$ is called nonsingular \cite{FO14} if 
\[\cS\times\otimes^{d-1}\x=\0\Rightarrow \x=\0.\]
Otherwise $\cS$ is called singular.
A nonzero homogeneous polynomial $f(\x)$ defines a hypersurface $H(f):=\{\x\in\C^{n}\setminus\{\0\},f(\x)=0\}$ in the $n-1$ projective space $\P\C^n$.  
$H(f)$ is called a smooth hypersurface if $\nabla f(\x)\ne \0$ for each $\x\ne \0$ that satisfies $f(\x)=0$.
The following lemma is probably well known to the experts.
\begin{lemma}\label{relatfxS}  Assume that $\cS\in\rS^d\C^n$.
	Let $f(\x)=\cS\times\otimes^d\x$.  Then
	\begin{eqnarray}\label{Fformulas}
	&&\bF(\x)=\cS\times \otimes^{d-1}\x=\frac{1}{d}\nabla f(\x)=\frac{1}{d}(\frac{\partial f}{\partial x_1}(\x),\ldots,\frac{\partial f}{\partial x_n}(\x)),\\
	\label{Eulerform}
	&&\sum_{i=1}^d x_iF_i(\x)=f(\x)=\cS\times\otimes^d\x.
	\end{eqnarray}
	$\cS$ is nonsingular if and only if $H(f)$ is a smooth hypersurface in $\P\C^n$.
\end{lemma}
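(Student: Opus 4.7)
The plan is to verify the three assertions in sequence, with all the real content sitting in the third part where smoothness of $H(f)$ must be related to nonsingularity of $\cS$; the first two are bookkeeping.

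For the gradient identity \eqref{Fformulas} I would write out
$f(\x)=\sum_{i_1,\ldots,i_d\in[n]}\cS_{i_1,\ldots,i_d}\,x_{i_1}\cdots x_{i_d}$
and differentiate with respect to $x_k$. Each index position contributes a term in which the factor $x_k$ is removed; since $\cS$ is symmetric, the $d$ terms obtained by fixing $i_j=k$ for $j=1,\ldots,d$ are all equal, and each equals $(\cS\times\otimes^{d-1}\x)_k$. Summing gives $\partial f/\partial x_k=d\,F_k(\x)$. The Euler-type relation \eqref{Eulerform} then follows directly from the definition of contraction: $\sum_{i=1}^n x_i F_i(\x)=\sum_{i,i_2,\ldots,i_d}\cS_{i,i_2,\ldots,i_d}x_i x_{i_2}\cdots x_{i_d}=f(\x)$. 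Alternatively, one can combine \eqref{Fformulas} with the classical Euler identity $\sum_i x_i\partial_i f=d\,f$ valid for any homogeneous polynomial of degree $d$.

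For the nonsingularity equivalence, the key observation is that by \eqref{Fformulas} the conditions $\cS\times\otimes^{d-1}\x=\0$ and $\nabla f(\x)=\0$ are literally identical. So ``$\cS$ nonsingular'' translates into: the only $\x\in\C^n$ with $\nabla f(\x)=\0$ is $\x=\0$. Smoothness of $H(f)$ means: whenever $\x\ne\0$ and $f(\x)=0$, then $\nabla f(\x)\ne\0$. The forward direction is immediate, since if $\nabla f(\x)=\0$ forces $\x=\0$, then \emph{a fortiori} no nonzero zero of $f$ can be a critical point. For the converse, assume $H(f)$ is smooth and suppose $\nabla f(\x)=\0$ for some $\x$; by Euler's relation \eqref{Eulerform} we get $f(\x)=\tfrac{1}{d}\sum_i x_i\partial_i f(\x)=0$, hence $\x$ would lie on $H(f)$ with vanishing gradient unless $\x=\0$. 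Smoothness thus forces $\x=\0$, i.e.\ $\cS$ is nonsingular.

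There is no real obstacle here; the only mildly delicate point is the combinatorial coefficient $d$ in part one, which must appear exactly once (from the $d$ symmetric copies) rather than being further modified by the multinomial weights in \eqref{defpolfx} — so I would write $f(\x)$ in its raw unsymmetrized form $\sum\cS_{i_1,\ldots,i_d}x_{i_1}\cdots x_{i_d}$ before differentiating, which makes the counting transparent.
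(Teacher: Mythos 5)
Your proposal is correct and follows essentially the same route as the paper: differentiate the raw unsymmetrized expansion of $f$, use symmetry to collect the $d$ equal terms, invoke Euler's identity, and then translate nonsingularity into the nonvanishing of $\nabla f$ away from the origin. You actually spell out the one step the paper leaves implicit — that Euler's identity forces any critical point of $f$ to lie on $H(f)$, which is exactly what makes the converse direction of the smoothness equivalence work.
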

\begin{proof} Assume that $\cS=[\cS_{i_1,\ldots,i_d}]\in \rS^d\F^n$.  Then 
	\[f(\x)=\sum_{i_j\in[n], j\in [d]}\cS_{i_1,\ldots,i_d}x_{i_1}\cdots x_{i_d}.\]
	Hence
	\[\frac{\partial f}{\partial x_l}=\sum_{k=1}^d\left(\sum_{i_k=l, i_j\in [n], j\in[d]\setminus\{k\}} \cS_{i_1,\ldots,i_d} \frac{x_{i_1}\cdots x_{i_d}}{x_{l}}\right).\]
	As $\cS$ is symmetric we can interchange in the above formula $i_k=l$ with $i_1$ for each $k\in [d]$.  Hence $d F_l = \frac{\partial f}{\partial x_l}$.  This proves (\ref{Fformulas}). The equality (\ref{Eulerform}) is Euler's identity.
	The equality (\ref{Fformulas}) implies that $\cS$ is nonsingular if and only if $H(f)$ is a smooth hypersurface.\qed 
\end{proof}

The remarkable result of Banach \cite{Ban38} claims that the spectral norm of a symmetric
tensor can be computed as a maximum on the set of rank one symmetric tensors:
\begin{equation}\label{Banthm}
\|\cS\|_{\sigma,\F}=\max\{|\cS\times \otimes^d \x|, \; \x\in\rS(n,\F), \textrm{ for } \cS\in\rS^d\F^n \}.
\end{equation}
This result was rediscovered several times since 1938.  In quantum information theory (QIT), for the case $\F=\C$, it appeared in \cite{Hubetall09}.  In mathematical literature, for the case $\F=\R$, it appeared in \cite{CHLZ12,Fri13}.  (Observe that  a natural generalization of Banach's theorem to partially symmetric tensors is given in \cite{Fri13}.)

Fix $\x\in\C^n$ and let $\zeta\in \C$.  Then $\cS\times\otimes^d\left(\zeta\x\right)=\zeta^d\left(\cS\times\otimes^d\x\right)$.  Hence there exists $\zeta\in\C, |\zeta|=1$ such that
$|\cS\times\otimes^d\x|=\Re\left(\cS\times\otimes^d\left(\zeta\x\right)\right)$.
Therefore for  $\F=\C$ we can replace the characterization (\ref{specnrmdef}) with:
\begin{equation}\label{maxcharspecnrmc}
\|\cS\|_{\sigma}=\max_{\x\in\rS(n,\C)} \Re(\cS\times\otimes^d\x), \textrm{ for } \cS\in\rS^d\C^n.
\end{equation}
\begin{lemma}\label{critptlem}  Assume that $\cS\in\rS^d\F^n, d\ge 2$.  
	A point $\x\in\rS(n,\F)$ is a critical point of $\Re f(\x)$ on $\rS(n,\F)$
	if and only if
	\begin{equation}\label{critpt}  
	\cS\times \otimes^{d-1}\x=\lambda\overline{\x}, \quad \x\in\rS(n,\F), \lambda\in\R,
	\end{equation}
	where $\overline{\x}$ denote the complex conjugate of $\x$.   The number of critical values $\lambda$ satisfying (\ref{critpt}) is finite.
\end{lemma}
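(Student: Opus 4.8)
The plan is to apply the Lagrange multiplier rule, treating $\C^n$ as $\R^{2n}$ when $\F=\C$ and $\R^n$ as itself when $\F=\R$. In either case $\rS(n,\F)$ is a regular level set of the real polynomial $\x\mapsto\|\x\|^2$, so $\x\in\rS(n,\F)$ is a critical point of $\Re f$ on $\rS(n,\F)$ exactly when $\nabla(\Re f)(\x)=2\mu\,\nabla\bigl(\tfrac12\|\x\|^2\bigr)(\x)$ for some real $\mu$, all gradients taken in the underlying real coordinates. For $\F=\C$ I would rewrite this using Wirtinger derivatives: with $x_k=u_k+iv_k$, the condition becomes $\partial(\Re f)/\partial\overline{x_k}=\mu\,x_k$ for $k\in[n]$. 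Since $f$ is holomorphic, $\partial(\Re f)/\partial\overline{x_k}=\tfrac12\,\overline{\partial f/\partial x_k}$, so the condition is equivalent to $\partial f/\partial x_k=2\mu\,\overline{x_k}$, i.e. $\nabla f(\x)=2\mu\,\overline{\x}$. Invoking Lemma~\ref{relatfxS}, which gives $\cS\times\otimes^{d-1}\x=\tfrac1d\nabla f(\x)$, and setting $\lambda=2\mu/d\in\R$, this is precisely \eqref{critpt}. The converse runs the same chain of equivalences backwards: if $\cS\times\otimes^{d-1}\x=\lambda\overline{\x}$ with $\lambda$ real and $\|\x\|=1$, then $\nabla f(\x)=d\lambda\,\overline{\x}$, hence $\nabla(\Re f)(\x)$ is a real multiple of the constraint gradient, so $\x$ is critical. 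The real case $\F=\R$ is the specialization $\overline{\x}=\x$, $\Re f=f$, where \eqref{critpt} is just the classical Lagrange condition $\nabla f(\x)=d\lambda\,\x$.

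For the finiteness of the set of admissible $\lambda$, I would first contract \eqref{critpt} with $\x$: by Euler's identity \eqref{Eulerform} the left-hand side equals $\cS\times\otimes^d\x=f(\x)$, while the right-hand side equals $\lambda\|\x\|^2=\lambda$. Hence every critical point $\x$ satisfies $f(\x)=\lambda$, so the set of values $\lambda$ in question equals the image of the critical set $C\subseteq\rS(n,\F)$ under $\Re f$. Now $C$ is cut out by the polynomial equations $\|\x\|^2=1$ together with the vanishing of the $2\times2$ minors of the matrix whose rows are $\nabla(\Re f)(\x)$ and $\nabla\|\x\|^2(\x)$ (this encodes the Lagrange dependence after eliminating $\mu$, and is equivalent to it since $\nabla\|\x\|^2\neq\0$ on the sphere). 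Thus $C$ is a compact real algebraic subset of $\R^{2n}$, respectively $\R^n$. Such a set has finitely many connected components, each of which is semialgebraically path-connected. Along any piecewise-$C^1$ path contained in $C$ the derivative of $\Re f$ vanishes identically, since the path lies on $\rS(n,\F)$, its velocity is tangent to $\rS(n,\F)$, and the tangential component of $\nabla(\Re f)$ vanishes on $C$ by definition. Therefore $\Re f$ is constant on each connected component of $C$, so $\Re f(C)$, and hence the set of critical values $\lambda$, is finite.

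The step I expect to be the main obstacle is this last finiteness argument: one must be careful to present the critical set as a semialgebraic (here, real algebraic) set, to invoke that such sets have finitely many path-connected components, and to verify that $\Re f$ is locally constant along paths inside $C$ rather than merely that its image has measure zero. The remainder is a routine but slightly delicate passage between the real picture and the Wirtinger picture, together with the bookkeeping that shows the multiplier may be taken real precisely because both $\rS(n,\F)$ and $\Re f$ are real objects; an alternative for $\F=\C$ would be to compute directly with $\u,\v$ and the real gradients of $\Re f$, which is longer but avoids Wirtinger calculus.
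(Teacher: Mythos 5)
Your proof is correct. The characterization of critical points is, up to notation, the same argument as the paper's: the paper derives the Lagrange condition by hand, expanding $\Re\bigl(\cS\times\otimes^d(\x+t\y)\bigr)$ to first order in $t$ for $\y$ with $\Re(\y^*\x)=0$ and concluding that $\overline{\cS\times\otimes^{d-1}\x}$ is $\R$-colinear with $\x$; your Wirtinger computation $\partial(\Re f)/\partial\overline{x_k}=\tfrac12\overline{\partial f/\partial x_k}$ packages exactly the same identification of the real gradient, and the passage to \eqref{critpt} via Lemma \ref{relatfxS} is identical. Where you genuinely diverge is the finiteness claim. The paper disposes of it in one line by citing Milnor and asserting that the restriction of a polynomial to the sphere ``has a finite number of critical points'' --- which, read literally, is false in general: for the exceptional tensors of \S\ref{sec:excepcase} the anti-eigenvectors form a circle, so the critical set is infinite, and only the number of critical \emph{values} is finite, which is what the lemma actually asserts. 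Your argument proves the correct statement: you first use Euler's identity to identify each admissible $\lambda$ with the value $f(\x)=\Re f(\x)$ at the corresponding critical point, then exhibit the critical set $C$ as a compact real algebraic set, invoke finiteness of its (semialgebraically path-connected) components, and check that $\Re f$ is constant along paths in $C$ because the tangential gradient vanishes there. This costs you the standard semialgebraic machinery (finiteness of components, piecewise-$C^1$ connecting paths), but it buys a self-contained and correctly stated proof of the finiteness assertion, which the paper's citation only gestures at.
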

\begin{proof} 
	Assume first that $\F=\R$.  Let $\x\in\rS(n,\R)$.  Suppose first that $\x$ is a critical point of $f(\z)=\cS\times\otimes^d\z$ for $\z\in\rS(n,\R)$.  Let $\y\in\R^n$ be orthogonal to $\x$: $\y\trans \x=0$.  Then $\|\x+t\y\|=\sqrt{1+t^2\|\y\|^2}=1+O(t^2)$ for $t\in\R$.  Clearly
	\[\cS\times \otimes ^d (\x+t\y)=\cS\times \otimes^d \x+td\y\trans (\cS\times \otimes^{d-1}\x) +O(t^2).\]
	As $\x$ is a critical point of $\cS\times\otimes^d\z$ for $\z\in\rS(n,\R)$ it follows that $y\trans (\cS\times \otimes^{d-1}\x)=0$ for each $\y$ orthogonal to $\x$.
	Hence $\cS\times \otimes^{d-1}\x$ is colinear with $\x$.  As $\bar\x=\x$ for each $\x\in\R^n$ we deduce (\ref{critpt}).  Similar arguments show that if (\ref{critpt}) holds for
	$\x\in\rS(n,\R)$ then $\x$ is a critical point.  
	
	As $f(\x)$ is a polynomial on $\R^n$ it follows that 
	its restriction on $\rS(n,\R)$ has a finite number of critical points \cite{Mil}.  This proves \emph{3} for $\F=\R$.
	
	Assume second that $\F=\C$.  View $\C^{n}$ as $2n$-dimensional real vector space $\R^{2n}$ with the standard inner product $\Re (\y^*\x)$, where $\y^*=\overline{\y}\trans$.  Hence $\|\x\|=\sqrt{\Re (\x^*\x)}$.  Assume that $\x\in\rS(n,\C)$
	is a critical point of $\Re (\cS\times \otimes^d\z)$ on $\rS(n,\C)$.  Let $\y\in\C^n$ be orthogonal to $\x$: $\Re (\y^*\x)=\Re(\overline{ \y}\trans \x)=0$.  Then $\|\x+t\y\|=\sqrt{1+t^2\|\y\|^2}
	=1+O(t^2)$ for $t\in\R$.   Hence
	\[\Re(\cS\times \otimes ^d (\x+t\y))=\Re(\cS\times \otimes^d \x)+td\Re(\y\trans (\cS\times \otimes^{d-1}\x)) +O(t^2).\]
	As $\x$ is a critical point we deduce that 
	\[0=\Re(\y\trans (\cS\times \otimes^{d-1}\x))=\Re(\y^* (\overline{\cS\times \otimes^{d-1}\x})).\]
	Hence $\overline{\cS\times \otimes^{d-1}\x}$ is $\R$-colinear with $\x$.  Thus (\ref{critpt}) holds. 
	As $q(\x):=\Re(\cS\times \otimes^d\x)$ is a polynomial on $\C^n\sim\R^{2n}$ it follows that 
	its restriction on $\rS(n,\C)$ has a finite number of critical points \cite{Mil}.  This proves \emph{3} for $\F=\C$.\qed
\end{proof}

Clearly, a maximum point of $|\cS\times \otimes^d\x|$ on $\rS(n,\F)$ is a critical point of $\Re\left(\cS\times\otimes^d\z\right)$ on $\rS(n,\F)$.  Hence
\begin{corol}\label{maxeig}  Let the assumptions and results of Lemma \ref{critptlem} hold.  Then
\begin{enumerate} 
		\item Assume that $\cS\in\rS^d\R^n$.  Then there exists $\x\in\rS(n,\R)$ satisfying (\ref{critpt}) such that $|\lambda|=\|\cS\|_{\sigma,\R}$.  Furthermore, $\|\cS\|_{\sigma,\R}$ is the maximum of all $|\lambda|$ satisfying (\ref{critpt}).
		\item Assume that $\cS\in\rS^d\C^n$.  Then there exists $\x\in\rS(n,\C)$ satisfying (\ref{critpt}) such that $\lambda=\|\cS\|_{\sigma}$.  Furthermore, $\|\cS\|_{\sigma}$ is the maximum of all $|\lambda|$ satisfying (\ref{critpt}).
\end{enumerate}
\end{corol}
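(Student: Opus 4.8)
The plan is to derive the statement from three ingredients already in place: Banach's theorem (\ref{Banthm}) (in the complex case its phase-normalized form (\ref{maxcharspecnrmc})), the observation recorded just before the statement that a maximizer of $|\cS\times\otimes^d\x|$ on the compact sphere $\rS(n,\F)$ is a critical point of $\Re(\cS\times\otimes^d\z)$, and Lemma \ref{critptlem}, which turns ``critical point'' into equation (\ref{critpt}). The only genuinely new computation is to feed (\ref{critpt}) into Euler's identity (\ref{Eulerform}) in order to identify the multiplier $\lambda$ with the value $f(\x):=\cS\times\otimes^d\x$ at the critical point.

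\textbf{The complex case (part 2).} Assume $\cS\in\rS^d\C^n$. By (\ref{maxcharspecnrmc}) and compactness of $\rS(n,\C)$ there is $\x\in\rS(n,\C)$ with $\Re f(\x)=\|\cS\|_\sigma$; in particular $\x$ is a critical point of $\Re f$ on $\rS(n,\C)$, so Lemma \ref{critptlem} supplies $\lambda\in\R$ with $\cS\times\otimes^{d-1}\x=\lambda\overline{\x}$. Substituting this into (\ref{Eulerform}) gives $f(\x)=\sum_i x_i F_i(\x)=\lambda\sum_i x_i\overline{x_i}=\lambda\|\x\|^2=\lambda$, so $f(\x)$ is real and $\lambda=\Re f(\x)=\|\cS\|_\sigma$; this is the existence claim. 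For the maximality clause, apply the \emph{same} substitution into (\ref{Eulerform}) to an arbitrary $\x\in\rS(n,\C)$ satisfying (\ref{critpt}) with multiplier $\lambda$: it yields $f(\x)=\lambda$, whence $|\lambda|=|\cS\times\otimes^d\x|\le\|\cS\|_\sigma$ by (\ref{specnrmdef}). Combined with the previous step, $\|\cS\|_\sigma=\max\{|\lambda| : \x\in\rS(n,\C)\textrm{ satisfies }(\ref{critpt})\}$.

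\textbf{The real case (part 1).} Assume $\cS\in\rS^d\R^n$; now $f$ is real-valued on $\R^n$. By (\ref{Banthm}) and compactness, a maximizer $\x\in\rS(n,\R)$ of $|f|$ has $f(\x)=\pm\|\cS\|_{\sigma,\R}$, so $\x$ is a global maximum or a global minimum of $f$ on $\rS(n,\R)$, hence a critical point. Lemma \ref{critptlem} then gives (\ref{critpt}) with $\lambda\in\R$, and since $\overline{\x}=\x$ Euler's identity (\ref{Eulerform}) yields $f(\x)=\lambda\|\x\|^2=\lambda$, so $|\lambda|=\|\cS\|_{\sigma,\R}$. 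The maximality of $|\lambda|$ over all real solutions of (\ref{critpt}) follows verbatim from the complex-case argument, using $f(\x)=\lambda$ and $|f(\x)|\le\|\cS\|_{\sigma,\R}$ for $\x\in\rS(n,\R)$.

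I do not anticipate a serious obstacle. The point that deserves the most care is the implication ``maximizer of $|f|$ on the sphere $\Rightarrow$ critical point in the sense of Lemma \ref{critptlem}'': in the complex case this is already absorbed into the reduction (\ref{maxcharspecnrmc}) from $|f|$ to $\Re f$, and in the real case it holds because an extremum of $|f|$ on the sphere is an extremum, hence a critical point, of the real-valued $f$. The second delicate point, needed specifically for part 2, is that (\ref{Eulerform}) pins down $\lambda=f(\x)$ itself and not merely $|\lambda|=|f(\x)|$, which is exactly what upgrades $|\lambda|=\|\cS\|_\sigma$ to $\lambda=\|\cS\|_\sigma$.
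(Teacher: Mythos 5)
Your proof is correct and follows exactly the route the paper intends: the paper derives this corollary in one line from the remark that a maximizer of $|\cS\times\otimes^d\x|$ (equivalently, via (\ref{maxcharspecnrmc}), of $\Re f$) is a critical point, together with Lemma \ref{critptlem}. Your use of Euler's identity (\ref{Eulerform}) to pin down $\lambda=f(\x)$ — which both upgrades $|\lambda|$ to $\lambda$ in the complex case and gives the maximality clause — correctly supplies the detail the paper leaves implicit.
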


We call $\x\in\rS(n,\F)$ and $\lambda\in\F$ an eigenvector and an eigenvalue of $\cS\in\rS^d\F^n$ if the following conditions hold \cite{CS}:
\begin{equation}\label{defeigvl}
\cS\times\otimes^{d-1}\x=\lambda\x, \quad \x\in\rS(n,\F),\;\lambda\in \F, \quad \cS\in\rS^d\F^n.
\end{equation}

Assume that $\F=\R$.  Then (\ref{defeigvl}) is equivalent to (\ref{critpt}).  Assume first that $d$ is odd and $\x$ is an eigenvector of $\cS$.  Then $-\x$ is an eigenvector of $\cS$ corresponding to $-\lambda$.  Hence without loss of generality we can consider only nonnegative eigenvalues of (\ref{defeigvl}).  Assume second that $d$ is even and $\x$ is an eigenvector of $\cS$.  Then $-\x$ is also eigenvector of $\cS$ corresponding to $\lambda$.

Suppose that $\F=\C$.  Assume that $\x\in\rS(n,\C)$ and $\lambda\in\C$ are an eigenvector and the corresponding eigenvalue of $\cS\in\rS^d\C^n$.  Let $\zeta\in\C,|\zeta|=1$.
Then $\zeta\x$ is an eigenvector of $\cS$ with the corresponding eigenvalue $\zeta^{d-2}\lambda$.  Assume that $\lambda\ne 0$.  For $d>2$ we can choose $\zeta,|\zeta|=1$ such that $\zeta^{d-2}\lambda=|\lambda|>0$.  
Furthermore,  the number of such choices of $\zeta$ is $d-2$.   In this context it is natural to consider the eigenspace $\mathrm{span}(\x)$, to which correspond a unique eigenvector
$\lambda\ge 0$.
It is shown in \cite{CS} that the number of different 
eigenspaces of generic $\cS\in\rS^d\C^n$ is
\begin{equation}\label{CSnumbereigv}
c(2,n) =n, \quad c(d,n)=\frac{(d-1)^n -1}{d-2} \textrm{ for } d\ge 3.
\end{equation}
Hence for generic $\cS\in\rS^d\R^n$ one has the above number of eigenspaces $\mathrm{span}(\x), \x\in\rS(n,\C)$.  The obvious question is what is the maximal number
of eigenspaces $\mathrm{span}(\x)$ corresponding to $\x\in\rS(n,\R)$ for generic $\cS\in\rS^d\R^n$.  Since $\cS\times \otimes^d\x$ has at least two critical points on
$\rS(n,\R)$ for $\cS\ne 0$ it follows that $\cS\ne 0$ has at least one real eigenspace. 

A vector $\x\in\rS(n,\C)$ and a scalar $\lambda\in\R$ that satisfy (\ref{critpt}) are called the \emph{anti-eigenvector} and \emph{anti-eigenvalue} of $\cS\in\rS^d\C^n$.
Note that if $\x$ is an anti-eigenvector and $\lambda$ a corresponding anti-eigenvalue then $\zeta\x$ is also anti-eigenvector with a corresponding anti-eigenvalue $\varepsilon\lambda$,  where $\varepsilon=\pm 1$ and $\zeta^d=\varepsilon$.  Hence, we can always assume that each nonzero anti-eigenvalue is positive, and there are $d$ different choices of $\zeta$ such that $\zeta\x\in\mathrm{span}(\x)$ is an anti-eigenvector corresponding to a given positive anti-eigenvalue $\lambda$.

The above result for symmetric complex valued matrices is a particular case of Schur's theorem.  Namely, assume that $T\in\rS^2\C$, i.e. $T$ is a complex symmetric matrix.
Then there exists a unitary matrix $U\in \C^{n\times n}$ such that $U\trans AU=\diag(a_1,\ldots,a_n), a_1\ge \cdots\ge a_n\ge 0 $. Here $a_i=\sigma_i(T), i\in [n]$ are the singular values of $T$.    
Let $U=[\u_1,\cdots,\u_n] $.  Then $AU=\bar U \diag(a_1,\ldots,a_n)$ which is equivalent to $A\u_i=a_i \bar\u_i, i\in[n]$, which is a special case of (\ref{critpt}).  

We now give an estimate of the number of different positive anti-eigenvalues for a generic $\cS\in\rS^d\C^n$.

\begin{theorem}\label{estnumbanteig}  Assume that $\cS\in\rS^d\C^n$ is nonsingular.  Then the number of positive anti-eigenvalues with corresponding anti-eigenspaces
	is finite.  This number $\mu(\cS)$, counting with multiplicities,  satisfies the inequalities 
	\begin{equation}\label{estnumbanteig1}
	\frac{(d-1)^n-1}{d}\le \mu(\cS)\le \frac{(d-1)^{2n} -1}{(d-1)^2-1}.
	\end{equation}
\end{theorem}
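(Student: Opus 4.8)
The plan is to turn the anti-eigenvalue equation $\bF(\x)=\lambda\overline\x$, where $\bF=\cS\times\otimes^{d-1}\cdot$, into a fixed point problem for a self-map of $\P\C^n$ and then read off both bounds from the Lefschetz--Hopf theorem. First I would note that, by Lemma~\ref{relatfxS}, nonsingularity of $\cS$ is exactly the condition $\bF^{-1}(\0)=\{\0\}$, and the same then holds for the holomorphic degree $(d-1)$ map $\mathbf{G}(\y):=\overline{\bF(\overline\y)}$. Hence $\bF$ and $\mathbf{G}$ are base point free and descend to morphisms $\varphi_\bF,\varphi_\mathbf{G}\colon\P\C^n\to\P\C^n$ with $\varphi_\bF^{*}\mathcal{O}(1)=\varphi_\mathbf{G}^{*}\mathcal{O}(1)=\mathcal{O}(d-1)$. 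Writing $\kappa$ for coordinatewise conjugation on $\P\C^n$, I set $\widetilde\psi:=\kappa\circ\varphi_\bF$ (anti-holomorphic) and $\Psi:=\varphi_\mathbf{G}\circ\varphi_\bF$ (holomorphic, with $\Psi^{*}\mathcal{O}(1)=\mathcal{O}((d-1)^{2})$). The elementary observations are: (a) from $\mathbf{G}(\bF(\x))=\overline{\bF(\overline{\bF(\x)})}$ one gets $\widetilde\psi\circ\widetilde\psi=\Psi$; (b) $[\x]$ is a fixed point of $\widetilde\psi$ iff $\bF(\x)=\mu\overline\x$ for some $\mu\in\C^{*}$, and rescaling $\x$ by a suitable $d$-th root of $|\mu|/\mu$ makes the anti-eigenvalue positive, so $\mathrm{Fix}(\widetilde\psi)$ is precisely the set of anti-eigenspaces of $\cS$ (each carrying a positive anti-eigenvalue, as recorded before the theorem); and (c) such an $\x$ is an eigenvector of $\mathbf{G}\circ\bF$, whence $\mathrm{Fix}(\widetilde\psi)\subseteq\mathrm{Fix}(\Psi)$.

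Next I would establish finiteness. If an irreducible component $Z\subseteq\mathrm{Fix}(\Psi)$ had positive dimension, then $\Psi|_{Z}=\mathrm{id}_{Z}$ would give $\mathcal{O}(1)|_{Z}\cong\Psi^{*}\mathcal{O}(1)|_{Z}=\mathcal{O}((d-1)^{2})|_{Z}$, so $\mathcal{O}((d-1)^{2}-1)|_{Z}$ would be trivial, contradicting the ampleness of $\mathcal{O}(1)|_{Z}$ since $(d-1)^{2}-1>0$ for $d\ge3$. Thus $\mathrm{Fix}(\Psi)$, and a fortiori $\mathrm{Fix}(\widetilde\psi)$, is a finite scheme; I would define the multiplicity of an anti-eigenspace $[\x]$ to be the local intersection multiplicity $m([\x])$ of $\mathrm{Fix}(\Psi)$ there, and $\mu(\cS):=\sum_{[\x]\in\mathrm{Fix}(\widetilde\psi)}m([\x])$. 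For the upper bound: $\Psi$ being holomorphic, each of its fixed-point indices equals the positive integer $m(\cdot)$, so Lefschetz--Hopf gives $\sum_{[\u]\in\mathrm{Fix}(\Psi)}m([\u])=L(\Psi)$; with $H^{*}(\P\C^n;\Q)=\Q[h]/(h^{n})$ and $\Psi^{*}h=(d-1)^{2}h$ this equals $\sum_{i=0}^{n-1}(d-1)^{2i}=\frac{(d-1)^{2n}-1}{(d-1)^{2}-1}$, and $\mathrm{Fix}(\widetilde\psi)\subseteq\mathrm{Fix}(\Psi)$ yields $\mu(\cS)\le\frac{(d-1)^{2n}-1}{(d-1)^{2}-1}$.

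For the lower bound I would apply Lefschetz--Hopf to the merely continuous map $\widetilde\psi$. From $\widetilde\psi=\kappa\circ\varphi_\bF$, $\kappa^{*}h=-h$ and $\varphi_\bF^{*}h=(d-1)h$ one gets $\widetilde\psi^{*}h=(1-d)h$, so $L(\widetilde\psi)=\sum_{i=0}^{n-1}(1-d)^{i}=\frac{1-(1-d)^{n}}{d}$, which is an integer. Hence $L(\widetilde\psi)=\sum_{[\x]\in\mathrm{Fix}(\widetilde\psi)}\mathrm{ind}(\widetilde\psi,[\x])$, and using $|\mathrm{ind}(\widetilde\psi,[\x])|\le m([\x])$ I would conclude $\mu(\cS)=\sum m([\x])\ge|L(\widetilde\psi)|=\frac{|1-(1-d)^{n}|}{d}$; since $|1-(1-d)^{n}|$ equals $(d-1)^{n}-1$ for $n$ even and $(d-1)^{n}+1$ for $n$ odd, in both cases $\mu(\cS)\ge\frac{(d-1)^{n}-1}{d}$. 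The inequality $|\mathrm{ind}(\widetilde\psi,[\x])|\le m([\x])$ I would obtain by perturbing $\varphi_\bF$ within degree $(d-1)$ morphisms: the square of the resulting anti-holomorphic perturbation of $\widetilde\psi$ is a holomorphic perturbation of $\Psi$, hence has at most $m([\x])$ fixed points near $[\x]$, each of nonnegative index, so the perturbed $\widetilde\psi$ has at most that many nearby fixed points, each of index $\pm1$.

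I expect the two non-formal points to be exactly these: the finiteness step, which I resolve by running the ampleness/torsion argument on the holomorphic square $\Psi$ rather than on $\bF$ itself; and the local comparison $|\mathrm{ind}(\widetilde\psi,\cdot)|\le m(\cdot)$ between the topological fixed-point index of the anti-holomorphic map $\widetilde\psi$ and the intersection multiplicity of its holomorphic square $\Psi=\widetilde\psi^{2}$. The rest is the routine Lefschetz bookkeeping on $\P\C^n$, together with the observation that $d$ divides $1-(1-d)^{n}$. The case $d=2$ (where $\Psi$ has degree $1$) is excluded by this argument but is classical: it is the Takagi/Schur factorization of complex symmetric matrices, already discussed before the theorem.
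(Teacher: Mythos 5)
Your proposal is correct in substance but runs along a genuinely different track from the paper's. The paper stays affine: it normalizes an anti-eigenvector to a solution of $\bF(\y)=\bar\y$, forms the holomorphic square $\bH=\bar\bF\circ\bF$, and for the upper bound counts the zeros of $\mathbf{K}(\y)=\bH(\y)-\y$ as a proper polynomial map of $\R^{2n}$ (a branched cover of degree $(d-1)^{2n}$), while the lower bound comes from the topological degree of the one-point compactification of $\mathbf{G}_t(\y)=\bF(\y)-t\bar\y$ on $\rS^{2n}$; the divisors $d$ and $(d-1)^2-1$ are then inserted by hand as the sizes of the orbits $\{\zeta\y\}$ of rescaled solutions, after splitting off $\y=\0$ as a simple root. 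You instead descend immediately to $\P\C^n$ and apply Lefschetz--Hopf to $\Psi=\widetilde\psi^{2}$ (holomorphic) and to $\widetilde\psi=\kappa\circ\varphi_{\bF}$ (anti-holomorphic), so the two denominators appear automatically as the geometric series $\sum_{i}(d-1)^{2i}$ and $\sum_{i}(1-d)^{i}$, the trivial solution never arises, finiteness follows from ampleness rather than properness, and for odd $n$ you even obtain the marginally sharper lower bound $((d-1)^{n}+1)/d$. The one step you should not leave as a plan is the comparison $|\mathrm{ind}(\widetilde\psi,[\x])|\le m([\x])$: your perturbation sketch is the right idea, but it needs (i) a parametric transversality argument showing that a generic small perturbation of $\bF$ within degree-$(d-1)$ systems makes the nearby fixed points of the perturbed $\widetilde\psi$ nondegenerate, and (ii) stability of the local index, before you may conclude $|\sum\pm1|\le m([\x])$. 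This is precisely the point at which the paper's own proof is also informal (its passage from $\deg\widehat{\mathbf{G}}=(d-1)^{n}$ to ``at least $(d-1)^{n}$ preimages counted with multiplicities''), so your argument sits at a comparable level of rigor; note also that your $\mu(\cS)$ is defined via the $\Psi$-multiplicity whereas the paper tacitly uses two different affine multiplicities in the two directions, and that both treatments must, and do, dispose of $d=2$ separately via Schur's factorization.
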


\begin{proof} Assume that $\cS\in\rS^d\C^n$ is nonsingular.  Suppose first that $d=2$. Schur's theorem implies that the number of different positive anti-eigenvalues of a complex symmetric 
	matrix, which are the singular values of $\cS$, is at most $n$.  Hence  (\ref{estnumbanteig1}) holds.
	
	Suppose second that $d>2$.
	Assume that $\x\in\rS(n,\C)$ is an anti-eigenvector with corresponding anti-eigenvalue $\lambda> 0$.  Choose $\y=(\lambda)^{-\frac{1}{d-2}}\x$ to obtain:
	\begin{equation}\label{antifixpoint}
	\cS\times \otimes^{d-1}\y=\bF(\y)=\bar \y.
	\end{equation}
	
	Consider the system (\ref{antifixpoint}).  It can be viewed as a system of $2n$ polynomial equations with $2n$ real variables when we identify $\C^n$ with $\R^{2n}$.
	Then it has a trivial solution $\y=0$ with multiplicity one since the linear term of $\mathbf{G}(\y):=\cS\times\otimes^{d-1}\y-\bar\y$ is $-\bar\y$.  Hence the Jacobian of $\mathbf{G}(\x)$ at $\y=\0$
	is invertible.
	Let $\y\ne \0$ be a solution of (\ref{antifixpoint}).  Then $\x=\frac{1}{\|\y\|}\y$ is an anti-eigenvector corresponding to the anti-eigenvalue $\|\y\|^{-\frac{1}{d-2}}$.
	
	We now show that (\ref{antifixpoint}) has a finite number of solutions.  Denote 
	\begin{equation}\label{defFbarF}
	\bar{\bF}:\C^n\to\C^n,\bar{\bF}(\y):=\bar{\cS}\times \otimes^{d-1}\y.
	\end{equation}
	Hence (\ref{antifixpoint}) is equivalent to  $\y=\overline{\bF(\y)}=\bar{\bF}(\bar\y)$.
	Let 
	\begin{equation}\label{defH}
	\bH: \;\C^n\to \C^n, \quad \bH=\bar{\bF}\circ\bF.
	\end{equation}
	Observe first that $\cS$ is nonsingular if and only if $\bar\cS$ is nonsingular.  The assumption that $\cS$ is nonsingular yields 
	\begin{equation}\label{nonsingHmap}
	\bH(\y)=\bar\bF(\bF(\y))=\0 \Rightarrow \bF(\y)=\0 \Rightarrow \y=\0.
	\end{equation}
	Let $\bH(\y)=(H_1(\y),\ldots,H_n(\y))\trans$.  Then each $H_i(\y)$ is a nonzero homogeneous polynomial of degree $(d-1)^2$.
	
	Observe next that each $\y\in\C^n$ that satisfies (\ref{antifixpoint}) is a fixed point of $\mathbf{H}$:
	\begin{equation}\label{fixpointH}
	\bH(\y)=\y.
	\end{equation}
	Let $\mathbf{K}(\y):=\bH(\y)-\y$.   As the principle homogeneous part of $\mathbf{K}$ is $\mathbf{H}$ it follows 
	that the map $\mathbf{K}:\C^n\to \C^n$ is a proper map \cite{Fri77}, i.e. $\lim_{\|\y\|\to\infty}\|\mathbf{K}(\y)\|=\infty$.
	Hence $\mathbf{K}$ is a branched cover of $\C^n$ of degree $\deg(\mathbf{K})=(d-1)^{2n}$.  In particular, $\mathbf{K}^{-1}(\0)$
	consists of at most $(d-1)^{2n}$ distinct points.  If we count these points with multiplicities then their number is exactly $(d-1)^{2n}$ .
	Clearly, $\mathbf{K}(\0)=\0$.  As the Jacobian of $\mathbf{K}$ at $\0$ is nonsingular it follows that $\0$ is a simple solution of $\mathbf{K}(\y)=\0$.
	Hence the number of nonzero points in $\mathbf{K}^{-1}(\0)$ is exactly $(d-1)^{2n}-1$, if we count each nonzero point with its multiplicity.
	As we explained above if $\mathbf{K}(\y)=\0, \y\ne \0$ then  $\mathbf{K}(\zeta\y)=0$ for each $\zeta$ satisfying $\zeta^{(d-1)^2-1}=1$.This observation
	yields the second inequality in (\ref{estnumbanteig1}).
	
	We now prove the first inequality in (\ref{estnumbanteig1}) using the degree theory as in \cite{Fri77}.  For $t\in\R$ let
	$\mathbf{G}_t(\y)=\bF(\y)-t\bar\y$.  As the principle homogeneous part of $\mathbf{G}_t$ is $\mathbf{F}$ for each $t\in\R$ it follows that $\mathbf{G}_t:\C^n\to \C^n$
	is a proper map. View the $2n$-dimensional sphere $\rS^{2n}\subset \R^{2n+1}$ as the one point compactification of $\C^n$: $\rS^{2n-1}\sim \C^n\cup\{\infty\}$.
	Extend $\mathbf{G}_t$ to the map $\widehat {\mathbf{G}}_t: \C^n\cup\{\infty\}\to\C^n\cup\{\infty\}$ by letting $\widehat {\mathbf{G}}_t(\infty)=\infty$.
	As $\mathbf{G}_t$ is proper it follows that $\widehat{\mathbf{G}}_t$ is continuous on  $\C^n\cup\{\infty\}$.  Hence we can define the topological degree of the map 
	$\widehat{\mathbf{G}}_t$ denoted $\deg \widehat{\mathbf{G}}_t$.   It is straightforward to show that the map $\widehat{\mathbf{G}}_t$ is continuous in the parameter
	$t$.  Hence $\deg \widehat{\mathbf{G}}_t$ does not depend on $t$.  In particular  $\deg \widehat{\mathbf{G}}_t= \deg \widehat{\mathbf{G}}_0= \deg \widehat{\mathbf{F}}$.
	$\mathbf{F}$ is a proper polynomial map in $n$-complex variables on $\C^n$.  Hence its degree is $(d-1)^n>0$.  Therefore $\deg \widehat{\mathbf{G}}_1= \deg \widehat{\mathbf{G}}=(d-1)^n$.  As $\mathbf{G}$ is a real polynomial map in $2n$ real variables it follows that $\mathbf{G}^{-1}(\w)$ is a finite set $\{z_1(\w),\ldots,\z_{N(\w)}(\w)\}$, for most of the points $\w\in\C^n$,
	where the Jacobian of $\mathbf{G}$ is invertible.  Let $\varepsilon(\z_i(\w))\in \{-1,1\}$ be the sign of the determinant of the Jacobian of $\mathbf{G}$ at $\z_i(\w)$, viewed as a real matrix of order $2n$.   Then
	\[(d-1)^n=\deg\widehat{\mathbf{G}}=\sum_{i=1}^{N(\w)} \varepsilon(\z_i(\w)).\]
	Therefore the number of preimages of most of $\w$ is at least $(d-1)^n$.  Recall that we showed that the set $\mathbf{G}^{-1}(\0)$ is a finite set.
	Hence counting with multiplicities, i.e. the minimum number of preimages of $\mathbf{G}^{-1}(\w)$ for small $\|\w\|$, we deduce that this number is at least
	$(d-1)^n$.  Recall that $\y=0$ is a simple root of $\mathbf{G}(\y)=\0$.  Hence the number of nonzero roots of $\mathbf{G}(\y)=\0$, counted with their multiplicities is at
	least $(d-1)^n-1$.  Each nonzero root $\y$ gives rise to $d$ distinct solutions $\zeta\y$, where $\zeta^d=1$.  These arguments give the lower bound in  (\ref{estnumbanteig1}).\qed
\end{proof} 

\begin{remark}\label{remark1} In \cite{HS14} the authors consider the dynamics of a special anti-holomorphic map of $\C$ of the form $z\mapsto \bar z^d +c$.  They also note that the dynamics of the ``squared" map is given by the holomorphic map $z\mapsto (z^d +\bar c)^d +c$.
\end{remark}
Thus the dynamics of the maps $\bF$ and $\bH$ are generalizations of the dynamics studied in \cite{HS14}.

In what follows we will need the following observation:
\begin{lemma}\label{FHest} Assume that $\cS\in\rS^d\F^n$.  Let $\bF$ and $\mathbf{H}$ be defined as in (\ref{defFbarF}) and (\ref{defH}) respectively.
	Then
	\begin{equation}\label{FHest1}
	\|\bF(\y)\|\le \|\cS\|_{\sigma,\F}\|\y\|^{d-1}, \quad \|\bH(\y)\|\le \|\cS\|_{\sigma,\F}^d \|\y\|^{(d-1)^2}.
	\end{equation}
	For $\y\in\rS(n,\F)$ satisfying $|\cS\times \otimes^d\y|=\|\cS\|_{\sigma,\F}$ equality holds in the above inequalities.  Suppose furthermore that $d>2$ and $\y\ne \0$ is a fixed point of $\bH$.  Then
	\begin{equation}\label{FHest2}
	\|\y\|^{-(d-2)}\le \|\cS\|_{\sigma,\F}.
	\end{equation}
\end{lemma}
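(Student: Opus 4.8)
The plan is to reduce the two inequalities in \eqref{FHest1} to the definition \eqref{specnrmdef} of the spectral norm, then read the equality case off the anti-eigenvector equation satisfied by a global maximizer of $|\cS\times\otimes^d\x|$, and finally deduce \eqref{FHest2} by rearranging the bound on $\bH$.

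For the first inequality I would discard the trivial case $\y=\0$ and, using the homogeneity $\bF(c\y)=c^{d-1}\bF(\y)$, reduce to $\y\in\rS(n,\F)$, where the claim is $\|\bF(\y)\|\le\|\cS\|_{\sigma,\F}$. The key identity, immediate from \eqref{Fformulas} and the definition of contraction, is $\v\trans\bF(\y)=\cS\times(\v\otimes\otimes^{d-1}\y)$ for every $\v\in\F^n$. Taking $\v=\overline{\bF(\y)}/\|\bF(\y)\|$ when $\bF(\y)\ne\0$ (with $\v=\bF(\y)/\|\bF(\y)\|$ in the real case) gives $\|\bF(\y)\|=|\v\trans\bF(\y)|=|\cS\times(\v\otimes\otimes^{d-1}\y)|\le\|\cS\|_{\sigma,\F}$, the last step being \eqref{specnrmdef} applied to the unit rank-one tensor $\v\otimes\y\otimes\cdots\otimes\y$. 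For the bound on $\bH=\bar\bF\circ\bF$ I would apply the same estimate to $\bar\bF$, using $\|\bar\cS\|_{\sigma,\F}=\|\cS\|_{\sigma,\F}$ (conjugating a tensor and all its test vectors does not change the modulus of any contraction), to get $\|\bH(\y)\|\le\|\cS\|_{\sigma,\F}\|\bF(\y)\|^{d-1}\le\|\cS\|_{\sigma,\F}\big(\|\cS\|_{\sigma,\F}\|\y\|^{d-1}\big)^{d-1}=\|\cS\|_{\sigma,\F}^{d}\|\y\|^{(d-1)^2}$.

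For the equality case, let $\y\in\rS(n,\F)$ satisfy $|\cS\times\otimes^d\y|=\|\cS\|_{\sigma,\F}$. If $\F=\C$, multiply $\y$ by a unimodular scalar $\zeta$ so that $\y'=\zeta\y$ satisfies $\cS\times\otimes^d\y'=\|\cS\|_{\sigma,\C}$; then $\y'$ attains the maximum in \eqref{maxcharspecnrmc}, hence is a critical point of $\Re(\cS\times\otimes^d\x)$ on $\rS(n,\C)$, and Lemma \ref{critptlem} gives $\cS\times\otimes^{d-1}\y'=\mu\bar\y'$ with $\mu\in\R$. If $\F=\R$, take $\y'=\y$ directly; it is a critical point (a maximum or a minimum) of $\cS\times\otimes^d\x$ on $\rS(n,\R)$, and Lemma \ref{critptlem} gives $\cS\times\otimes^{d-1}\y'=\mu\y'$, $\mu\in\R$. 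In both cases Euler's identity \eqref{Eulerform} forces $\mu=\cS\times\otimes^d\y'$, so $|\mu|=\|\cS\|_{\sigma,\F}$; hence $\|\bF(\y')\|=\|\mu\bar\y'\|=|\mu|=\|\cS\|_{\sigma,\F}$, and since $\bar\bF(\bar\y')=\overline{\cS\times\otimes^{d-1}\y'}=\mu\y'$ we obtain $\bH(\y')=\bar\bF(\mu\bar\y')=\mu^{d}\y'$, so $\|\bH(\y')\|=|\mu|^{d}=\|\cS\|_{\sigma,\F}^{d}$. Because $\bF$ and $\bH$ are homogeneous, passing back from $\y'$ to $\y$ only multiplies $\bF(\y)$ and $\bH(\y)$ by unimodular scalars, so equality in \eqref{FHest1} holds at $\y$ as well.

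Finally, for \eqref{FHest2}: if $d>2$ and $\bH(\y)=\y$ with $\y\ne\0$, the bound on $\bH$ gives $\|\y\|\le\|\cS\|_{\sigma,\F}^{d}\|\y\|^{(d-1)^2}$, i.e. $1\le\|\cS\|_{\sigma,\F}^{d}\|\y\|^{(d-1)^2-1}$; since $(d-1)^2-1=d(d-2)$ this reads $\|\y\|^{-d(d-2)}\le\|\cS\|_{\sigma,\F}^{d}$, and taking positive $d$-th roots yields $\|\y\|^{-(d-2)}\le\|\cS\|_{\sigma,\F}$. I expect the only delicate point to be the bookkeeping with complex conjugates — namely the identity $\|\bar\cS\|_{\sigma,\F}=\|\cS\|_{\sigma,\F}$ and the observation that the phase rotation by $\zeta$ leaves $\|\bF(\y)\|$ and $\|\bH(\y)\|$ unchanged; everything else is homogeneity together with the anti-eigenvector characterization already in hand.
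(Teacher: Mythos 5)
Your proposal is correct and follows essentially the same route as the paper: reduce to unit $\y$ by homogeneity, bound $\|\bF(\y)\|$ by pairing it with the unit vector $\overline{\bF(\y)}/\|\bF(\y)\|$ inside a rank-one test tensor, chain the estimate through $\bar\bF$ for $\bH$, derive the equality case from the anti-eigenvector equation \eqref{critpt} at a maximizer, and rearrange the $\bH$ bound for \eqref{FHest2}. The only cosmetic difference is that you invoke Euler's identity to pin down $\mu=\|\cS\|_{\sigma,\F}$, where the paper cites Corollary \ref{maxeig} directly; both are fine.
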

\begin{proof}  Since $\bF$ and $\mathbf{H}$ are homogeneous maps of degree $d-1$ and $(d-1)^2$ respectively, it is enough to prove the inequalities (\ref{FHest1}) for $\y\in\rS(n,\F)$.
	Assume that $\y\in\rS(n,\F)$.  Let $\w=\cS\times \otimes^{d-1}\y$.  Assume first that $\w=\0$.  Then $\bF(\y)=\bH(\y)=\0$ and  (\ref{FHest1}) trivially holds.
	Assume second that $\w\ne \0$.  Let $\z=\frac{1}{\|\w\|}\overline {\w}$.  Hence 
	\[\|\bF(\y)\|=|\cS\times (\z\otimes (\otimes^{d-1}\y))|\le \|\cS\|_{\sigma,\F}.\]
	This establishes the first inequality in (\ref{FHest1}).  Clearly, $\|\bar\cS\|_{\sigma,\F}=\|\cS\|_{\sigma,\F}$.  Hence
	\[\|\bH(\y)\|=\|\bar\bF(\bF(\y))\|\le \|\cS\|_{\sigma,\F}(\|\bF(\y)\|)^{d-1}\le  \|\cS\|_{\sigma,\F} ( \|\cS\|_{\sigma,\F})^{d-1}= \|\cS\|_{\sigma,\F}^d.\]
	This establishes the second inequality in (\ref{FHest1}).
	
	Suppose that  $|\cS\times \otimes^d\y|=\|\cS\|_{\sigma,\F}$ for $\y\in\rS(n,\F)$.
	Assume first that $\F=\C$.
	Hence there exists $\zeta\in\C,|\zeta|=1$ such that $\x=\zeta\y$ satisfies (\ref{critpt}) with $\lambda=\|\cS\|_{\sigma}$.  Clearly $\|\bF(\x)\|=\lambda=\|\cS\|_{\sigma}$.
	Moreover 
	\[\bH(\x)=\bar \bF(\lambda\bar\x)=\lambda^{d-1}\bar \bF(\bar\x)=\lambda^d\x=
	\|\cS\|^{d}_{\sigma}\x.\]
	Hence $\|\bH(\x)\|=\|\cS\|^{d}_{\sigma}$.  Since $\bF$ and $\bH$ are homogeneous it follows that $\|\bF(\y)\|=\|\cS\|_{\sigma}$ and $\|\bH(\y)\|=\|\cS\|_{\sigma}^d$.
	
	Assume second that $\F=\R$.  Then $\y\in\rS(n,\R)$ is a critical point of $\cS\times \otimes^d\x$ on $\rS(n,\R)$.  Corollary \ref{maxeig} yields that $\cS\times \otimes^{d-1}\y=\pm \|\cS\|_{\sigma,\R}\y$.  Hence $\|\bF(\y)\|=\|\cS\|_{\sigma,\R}$ and $\|\bH(\y)\|=\|\cS\|_{\sigma,\R}^d$.
	
	Assume finally that $\bH(\y)=\y, \y\ne \0$.  The second inequality of (\ref{FHest1}) yields $\|\y\|= \|\bH(\y)\|\le  \|\cS\|^d_{\sigma,\F}\|\y\|^{(d-1)^2}$.  Hence
	$ \|\cS\|^d_{\sigma,\F}\ge \|\y\|^{-(d-1)^2+1}=\|\y\|^{-d(d-2)}$ which implies (\ref{FHest2}).\qed
\end{proof}
\section{Polynomial-time computability of spectral norm of symmetric $d$-qubits}\label{sec:dqubit}
In this section we slightly change our notations as follows.  First, the vectors in $\F^2$ are denoted as $\x=(x_0,x_1)\trans$.  Second, as common in physics, we assume that the 
$\cS\in \rS^d\F^2$ has entries $s_{i_1,\ldots,i_d}$ where $i_1,\ldots,i_d\in\{0,1\}$. 
(That is, $s_{i_1,\ldots,i_d}=\cS_{i_1+1,\ldots, i_d+1}$.)
Thus $\cS$ is parametrized by a vector $\mathbf{s}=(s_0,\ldots,s_d)$ where $s_{i_1,\ldots,i_d}=s_k$ if exactly $k$ indices from the multiset
$\{i_1,\ldots,i_d\}$ are equal to $1$.  Note that exactly $d \choose k$ entries of $\cS$ are equal to $s_k$.
Hence
\begin{equation}\label{normalization}
\|\cS\|=\sqrt{\sum_{k=0}^d {d\choose k}|s_k|^2}.
\end{equation} 
Recall that a qubit state is identified with the class of all vectors 
\[\{\zeta\x: \x=(x_0,x_1)\trans\in\C^2, \|\x\|=1, \zeta\in\C,|\zeta|=1\}.\] 
Denote by $\Gamma$ the set of all qubits.  Then $\Gamma$ can be identified with the Riemann sphere $\C\cup\{\infty\}$.   Indeed associate with a qubit $\x=(x_0,x_1)\trans, x_0\ne 0$ a unique complex number $z=\frac{x_1}{x_0}\in\C$.  The qubit $\x=(0,x_1), |x_1|=1$ corresponds to $z=\infty$.

The following lemma is a preparation result to state the main theorem of this section.
\begin{lemma}\label{qubspecnrmlem}  Let $\cS\in\rS^d\C^2$ and associate with $\cS$ the vector $\mathbf{s}=(s_0,\ldots,s_d)\trans\in \C^{d+1}$.
	Then
	\begin{enumerate}
		\item
		Let $f(\x)=\cS\times\otimes ^d\x$ and  $\cS\times\otimes^{d-1} \x=\mathbf{F}(\x)=(F_0(\x),F_1(\x))\trans$, where $\x=(x_0,x_1)\trans$.  Then
		\begin{eqnarray}\label{deffx}
		&&f(\x)=\sum_{j=0}^s {d\choose j} s_j x_0^{d-j}x_1^j=x_0^d \phi(\frac{x_1}{x_0}), \textrm{ where }\phi(z)=\sum_{j=0}^s {d\choose j} s_j z^j,\\
		\label{formF0}
		&&F_0(\x)=\sum_{j=0}^{d-1} {d-1\choose j} s_jx_0^{d-1-j}x_1^j =\frac{1}{d}\frac{\partial f}{\partial x_0},\\
		&&F_1(\x)=\sum_{j=0}^{d-1} {d-1\choose j}s_{j+1} x_0^{d-j-1}x_1^j=\frac{1}{d}\frac{\partial f}{\partial x_1}.\label{formF1}
		\end{eqnarray}
		\item
		\[x_0F_0(\x)+x_1F_1(\x)=f(\x)=\cS\times \otimes^d\x.\]
		\item
		The system (\ref{antifixpoint}) is
		\begin{equation}\label{antifixqub}
		F_0(\x)=\bar x_0, \quad F_1(\x)=\bar x_1.
		\end{equation}
		\item For $\mathbf{s}=(0,\ldots,0,s_d)\trans$ we have $\|\cS\|_{\sigma,\F}=|s_d|$.
	\end{enumerate}	
\end{lemma}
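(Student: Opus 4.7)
My plan is to verify each of the four assertions by a direct expansion of the tensor-action notation, using the parameterization $\mathbf{s}=(s_0,\ldots,s_d)\trans$.

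For part 1, I would expand $f(\x)=\cS\times\otimes^d\x$ as a sum over index tuples $(i_1,\ldots,i_d)\in\{0,1\}^d$ and then group by $j$, the number of coordinates equal to $1$. Since the entry depends only on $j$ (it equals $s_j$) and there are $\binom{d}{j}$ tuples with exactly $j$ ones, each producing $x_0^{d-j}x_1^j$, I obtain the claimed formula for $f(\x)$; factoring $x_0^d$ yields the one-variable polynomial $\phi(z)$ with $z=x_1/x_0$. The cleanest way to derive the formulas for $F_0$ and $F_1$ is to invoke Lemma \ref{relatfxS}, which identifies $\bF(\x)=\frac{1}{d}\nabla f(\x)$: differentiating the explicit form of $f$ termwise and applying the standard identities $(d-j)\binom{d}{j}=d\binom{d-1}{j}$ and $j\binom{d}{j}=d\binom{d-1}{j-1}$ (with an index shift $k=j-1$ in the second case) produces the stated binomial sums of degree $d-1$.

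Part 2 is Euler's identity for the homogeneous polynomial $f$ of degree $d$, equivalently the $n=2$ case of (\ref{Eulerform}) in Lemma \ref{relatfxS}. Part 3 is purely notational: writing the vector equation (\ref{antifixpoint}), i.e.\ $\bF(\y)=\bar{\y}$, componentwise with $\bF=(F_0,F_1)\trans$ and $\bar{\y}=(\bar x_0,\bar x_1)\trans$ (renaming $\y$ as $\x$) is exactly (\ref{antifixqub}).

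Part 4 follows from the collapse of the formula in part 1 to $f(\x)=s_d x_1^d$ when only $s_d$ is nonzero, so $|f(\x)|=|s_d|\,|x_1|^d$. Banach's characterization (\ref{Banthm}) then gives $\|\cS\|_{\sigma,\F}=|s_d|\max_{|x_0|^2+|x_1|^2=1}|x_1|^d=|s_d|$, with the maximum attained at the real unit vector $\x=(0,1)\trans$; since this extremizer lies in $\R^2$, the identity holds for both $\F=\R$ and $\F=\C$. There is no real obstacle in any of these steps; this lemma serves as computational scaffolding for the main theorem of the section.
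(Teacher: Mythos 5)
Your proposal is correct and follows essentially the same route as the paper: the same counting argument over index tuples for $f(\x)$, the same binomial identities for $F_0,F_1$, Euler's identity for part 2, and the direct evaluation $f(\x)=s_dx_1^d$ for part 4. The only (immaterial) difference is that you obtain the explicit sums for $F_0,F_1$ by differentiating $f$ and invoking $\bF=\frac{1}{d}\nabla f$ from Lemma \ref{relatfxS}, whereas the paper computes them directly from the contraction $F_l(\x)=\sum s_{l,i_2,\ldots,i_d}x_{i_2}\cdots x_{i_d}$ and then checks the gradient identity afterwards.
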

\begin{proof} \emph{1}.  The formula for $f(\x)=\sum_{j=0}^s {d\choose j} s_j x_0^{d-j}x_1^j$ follows from the observation that there exactly ${d\choose k}$ entries $\cS\in\rS^d\F^2$ that are equal to $s_k$.  Define $\phi(z)=\sum_{j=0}^s {d\choose j} s_j z^j\in\F[z]$.  Then $f(\x)=x_0^d\phi(\frac{x_1}{x_0})$.  This proves (\ref{deffx}).
	
	By definition $F_l(x_0,x_1)=\sum_{i_2,\ldots,i_d\in\{0,1\}} s_{l,i_2,\ldots,i_d}x_{i_2}\cdots x_{i_d}$ for $l=0,1$.
	Assume that exactly $j$ indices in the set $\{i_2,\ldots,i_d\}$ equal to $1$.  (Hence the other $d-1-j$ indices equal to $0$.)
	Then $x_{i_2}\cdots x_{i_d}=x_0^{d-1-j}x_1^j$.  The number of choices to have exactly $j$ indices in the set $\{i_2,\ldots,i_d\}$ equal to $1$ is $d-1\choose j$.
	Note that in this case $ s_{0,i_1,\ldots,i_d}=s_j$ and $ s_{1,i_1,\ldots,i_d}=s_{j+1}$.  These arguments prove the first part of (\ref{formF0}) and (\ref{formF1}) respectively.
	
	The equalities $F_l=\frac{1}{d}\frac{\partial f}{\partial x_l}$ for $l=0,1$ follow from the binomial identities
	\[{d\choose j}(d-j)=d{d-1 \choose j}, \quad {d\choose j}j=d{d-1\choose j-1}.\]
	\emph{2} is Euler's identity.
	
	\noindent
	\emph{3} is straightforward.
	
	\noindent
	\emph{4}. Assume that $\mathbf{s}=(0,\ldots,0,s_d)\trans$.  Then $\cS\times \otimes^d\x=s_d x_1^d$.  Hence $\|\cS\|_{\sigma,\F}=|s_d|$.\qed
\end{proof}	

The following theorem enables us to conclude that the geometric entanglement of qubits is polynomially computable. 
\begin{theorem}\label{computdqubspecnrm}  Let $\cS\in\rS^d\C^2$ and associate with $\cS$ the vector $\mathbf{s}=(s_0,\ldots,s_d)\trans\in \C^{d+1}$.
Assume that 
		\begin{equation}\label{Sassump}
		\mathbf{s}\ne (0,\ldots,0,s_d)\trans.
		\end{equation}
	Let $\phi(z)=\sum_{j=0}^s {d\choose j} s_j z^j$.  Then
	\begin{enumerate}
		\item 
		Define polynomials $p(z), q(z)$ and the rational rational function $r(z)$ as follows
		\begin{equation}\label{fzfor}
		p(z)=\sum_{j=0}^{d-1} {d-1\choose j}s_{j+1} z^j,\; q(z)=\sum_{j=0}^{d-1} {d-1\choose j} s_jz^j,\; r(z)=\frac{p(z)}{q(z)},\;
		\end{equation} 
		Then
		\begin{equation}\label{relpqrphi}
		p(z)=\frac{1}{d}\phi'(z),\; q(z)=\phi(z)-\frac{1}{d}z\phi'(z), \;r(z)=\frac{\phi'(z)}{d\phi(z)-z\phi'(z)}.
		\end{equation}
		The system (\ref{antifixqub}) for $(x_0,x_1)\trans\ne \0$ is equivalent to either
		\begin{equation}\label{antifixqub1}
		\bar z q(z)-p(z)=0,
		\end{equation}
		and $q(z)\ne 0$, i.e., $(x_0,x_1)\trans\ne (0,x_1)\trans$,  or  
		\begin{equation}\label{antifixqubinfty}
		s_{d-1}=0 \textrm{ and } s_d\ne 0 \;(z=\infty).
		\end{equation}
		\item Let
		\begin{equation}\label{defbarpqf}
		\bar p(z)=\sum_{j=0}^{d-1} {d-1\choose j}\bar s_{j+1} z^j, \bar q(z)=\sum_{j=0}^{d-1} {d-1\choose j} \bar s_jz^j,\bar r(z)=\frac{\bar p(z)}{\bar q(z)},g(z)=\bar r (r(z)).
		\end{equation}
		Then $g(z)=\frac{u(z)}{v(z)}$, where
		\begin{eqnarray}\label{defuz}
		u(z)= \sum_{j=0}^{d-1} {d-1\choose j}\bar s_{j+1} 
		\big(\sum_{k=0}^{d-1} {d-1\choose k} s_{k+1} z^k\big)^{j}\big(\sum_{k=0}^{d-1} {d-1\choose k}  s_kz^k\big)^{d-1-j},\\ 
		\label{defvz}
		v(z)= \sum_{j=0}^{d-1} {d-1\choose j}\bar s_{j} 
		\big(\sum_{k=0}^{d-1} {d-1\choose k} s_{k+1} z^k\big)^{j}\big(\sum_{k=0}^{d-1} {d-1\choose k} s_kz^k\big)^{d-1-j}.\quad
		\end{eqnarray} 
		Furthermore, the system (\ref{fixpointH}) for $(x_0,x_1)\trans\ne (0,x_1)\trans$ is equivalent to
		\begin{equation}\label{poleqfixpoint}
		zv(z)-u(z)=0,
		\end{equation}
		and $v(z)\ne 0$. 
		This is a polynomial equation of degree $(d-1)^2+1$ at most.   
		\item  Let 
		\begin{eqnarray}\label{defR}
		&&R=\{z\in\C, \;\bar z q(z)-p(z)=0\},\quad R'=R\cap\R,\\ 
		&&R_1=\{z\in\C,\;zv(z)-u(z)=0\},\quad R_1'=R_1\cap \R.\label{defR1}
		\end{eqnarray}
		Then $R'$ is the set of the real zeros of $zq(z)-p(z)=0$.  Furthermore, $R\subseteq R_1$.  In particular, if $z\in R$ and $q(z)=0$ then $z\in R_1$ and $v(z)=0$.
		\item The polynomial $zv(z)-u(z)$ is a zero polynomial if and only if one of the following conditions hold.  Either 
		\begin{equation}\label{geqiden}
		\mathbf{s}=A(\delta_{1(k+1)},\ldots,\delta_{(d+1)(k+1)}\trans) \textrm{ for } k\in[d-1], (f(\x)=A{d\choose k}x_0^{d-k}x_1^k),
		\end{equation}
		where $A$ is a nonzero scalar constant.  For this $\cS\in\rS^d\F^2$ we have
		\begin{equation}\label{geqidenSform}
		\|\cS\|_{\sigma,\F}=|A|{d \choose k} \big(1-\frac{k}{d}\big)^{\frac{d-k}{2}}\big(\frac{k}{d}\big)^{\frac{k}{2}}.
		\end{equation}
		Or $\cS$ has corresponding $\phi$ given by
		\begin{eqnarray}\label{geqiden1}
		&&\phi(z)=A (z+a)^p(z+b)^{d-p},\quad A\ne 0,\\ 
		&&a=e^{-s\i}c,\; b=-e^{-s\i}c^{-1},\;c\in\R\setminus\{0\}, \; s\in\R,\;p\in[d-1].\notag
		\end{eqnarray}
		Assume that $\cS\in\rS^d\R^2$ of the form \eqref{geqiden1}, i.e., $A\in\R$ and $s=0$.  Then $\|\cS\|_{\sigma,\R}$ is given by (\ref{realforspecnrm}), where  $R'$ is the set real solutions, (consisting of one or two real roots), of the quadratic equation $r(t)=t$.   
		
		Assume that $\cS\in\rS^d\C^2$ is of the form (\ref{geqiden1}).  Then $\|\cS\|_{\sigma}$ can be computed to an arbitrary precision as explained in \S\ref{sec:excepcase}.
		\item  Assume that $d>2$ and $\cS\in\rS^d\C^2$ is not of the form given in \emph{4}.    Then
		\begin{equation}\label{specnrmSform}
		\|\cS\|_{\sigma}= \max\left\{|s_d|, \max\{\frac{|q(z)|}{(1+|z|^2)^{\frac{d-2}{2}}}, z\in R\}\right\}.
		\end{equation}
		\item  Assume that $d>2$ and $\cS\in\rS^d\C^2$ is not of the form given in \emph{4}.  Then
		\begin{equation}\label{specnrmSform1}
		\|\cS\|_{\sigma}= \max\left\{|s_d|, \max\{\frac{|v(z)|^{\frac{1}{d}}}{(1+|z|^2)^{\frac{d-2}{2}}}, z\in R_1\}\right\}.
		\end{equation}
		\item  Assume that $\cS\in\rS^d\R^2$,  $d> 2$ and $\cS$ is not of the form given in \emph{4}. Then  
		\begin{equation}\label{realforspecnrm}
		\|\cS\|_{\sigma,\R}=\max\left\{|s_d|,  \max\{\frac{|q(t)|}{(1+|t|^2)^{\frac{d-2}{2}}}, t\in R'\}\right\}.
		\end{equation}
		\item  Assume that $\cS\in\rS^d\R^2$, $d> 2$ and $\cS$ is not of the form given in \emph{4}.  Then  
		\begin{equation}\label{realforspecnrm1}
		\|\cS\|_{\sigma,\R}= 
		\max\left\{|s_d|, \max\{\frac{|v(t)|^{\frac{1}{d}}}{(1+|t|^2)^{\frac{d-2}{2}}}, t\in R_1'\}\right\}.
		\end{equation}
		
	\end{enumerate}
\end{theorem}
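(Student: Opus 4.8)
**The plan is to reduce everything to the critical-point characterization of the spectral norm together with the parametrization of qubits by the Riemann sphere, and then grind out the algebra translating anti-fixed-point equations into polynomial equations in one variable.**

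First I would set up the dictionary between $\x=(x_0,x_1)\trans\in\rS(2,\C)$ and $z=x_1/x_0\in\C\cup\{\infty\}$, and between the tensor contractions $f(\x),\bF(\x)$ and the polynomials $\phi,p,q$ of Lemma~\ref{qubspecnrmlem}. The identities in part~\emph{1} — namely $p=\frac1d\phi'$, $q=\phi-\frac1dz\phi'$, $r=p/q$ — follow directly from the binomial identities $\binom{d}{j}(d-j)=d\binom{d-1}{j}$ and $\binom{d}{j}j=d\binom{d-1}{j-1}$ already invoked in Lemma~\ref{qubspecnrmlem}, together with Euler's identity $x_0F_0+x_1F_1=f$. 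To translate the anti-fixed-point system $F_0(\x)=\bar x_0$, $F_1(\x)=\bar x_1$ into a one-variable equation: dividing the two equations when $x_0\neq 0$ gives $F_1/F_0=\bar x_1/\bar x_0=\bar z$, and homogeneity of $F_0,F_1$ of degree $d-1$ lets one write $F_1(\x)=x_0^{d-1}p(z)$, $F_0(\x)=x_0^{d-1}q(z)$; hence $\bar z q(z)=p(z)$, which is \eqref{antifixqub1}, with the case $x_0=0$ (i.e. $z=\infty$) corresponding exactly to $s_{d-1}=0,s_d\neq 0$ by inspecting the leading coefficients — that is \eqref{antifixqubinfty}. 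For part~\emph{2}, the map $\bF$ corresponds on the Riemann sphere to $z\mapsto r(z)$ and $\bar\bF$ to $z\mapsto\bar r(z)$, so $\bH=\bar\bF\circ\bF$ becomes $g=\bar r\circ r$; clearing denominators in $g(z)=u(z)/v(z)$ gives the explicit \eqref{defuz}--\eqref{defvz}, and the fixed-point equation $g(z)=z$ becomes $zv(z)-u(z)=0$, of degree $\le(d-1)^2+1$ since $u,v$ have degree $\le(d-1)^2$. Part~\emph{3} is then a bookkeeping statement: every solution $z$ of $\bar z q(z)=p(z)$ gives, after applying $\bar r$ to both sides and using the anti-fixed-point relation, a solution of $g(z)=z$, hence $R\subseteq R_1$; that $z\in R,q(z)=0$ forces $v(z)=0$ is seen by tracking the pole cancellations.

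For part~\emph{4}, the exceptional cases: $zv(z)-u(z)$ vanishing identically means every $z$ is an anti-fixed point of the projectivized dynamics, which is a rigidity condition. One analyzes it by factoring $\phi$ over $\C$; a clean invariant is that $r(z)=\phi'(z)/(d\phi(z)-z\phi'(z))$ must satisfy $\bar r(r(z))\equiv z$, i.e. $r$ conjugates to an anti-Möbius involution, forcing $\phi$ either to be a single monomial $A\binom{d}{k}x_0^{d-k}x_1^k$ (the case \eqref{geqiden}) or to have exactly two distinct roots $-a,-b$ with the reciprocity relation $ab=-$(something forced by the unitary normalization) giving $a=e^{-s\i}c,b=-e^{-s\i}c^{-1}$ as in \eqref{geqiden1}. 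For the monomial case, \eqref{geqidenSform} is just Lemma~\ref{lem:Sdn1}, part~\emph{1}, rescaled by $|A|$. The remaining claims in part~\emph{4} about $\cS$ of the form \eqref{geqiden1} are deferred: the real case reduces to solving the quadratic $r(t)=t$ and plugging into \eqref{realforspecnrm}, and the complex case is handed to \S\ref{sec:excepcase}.

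Parts \emph{5}--\emph{8} are the payoff and all have the same shape. By Corollary~\ref{maxeig} and the reduction in \S\ref{sec:critpts}, $\|\cS\|_\sigma$ equals the maximum of $|\lambda|$ over solutions of $\cS\times\otimes^{d-1}\x=\lambda\bar\x$, $\x\in\rS(2,\C)$, and via $\y=\lambda^{-1/(d-2)}\x$ these correspond to nonzero anti-fixed points $\bF(\y)=\bar\y$, for which Lemma~\ref{FHest} gives $\|\cS\|_\sigma=\|\y\|^{-(d-2)}$ at the maximizing $\y$. Writing $\y=(y_0,y_1)$ with $z=y_1/y_0$, the relation $F_0(\y)=\bar y_0$ reads $y_0^{d-1}q(z)=\bar y_0$, whence $|y_0|^{d-2}=|q(z)|^{-1}\cdot|y_0|\cdot\overline{(\cdots)}$; carefully, $|y_0|^{d}=|q(z)|^{-1}|y_0|$ is not quite it — rather from $y_0^{d-1}q(z)=\bar y_0$ one gets $|y_0|^{2(d-1)}|q(z)|^2=|y_0|^2$... the correct bookkeeping yields $\|\y\|^2=|y_0|^2(1+|z|^2)$ and $|y_0|^{-(d-2)}=|q(z)|/(1+|z|^2)^{(d-2)/2}$ after using $\|\y\|^{-(d-2)}=\|\cS\|_\sigma$ — I will do this computation carefully. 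The $z=\infty$ case contributes $|s_d|$, explaining the outer $\max\{|s_d|,\dots\}$. This gives \eqref{specnrmSform}; \eqref{specnrmSform1} follows since on $R$ we have $q(z)^{\text{(stuff)}}$ related to $v(z)$ via $v(z)=\bar q(r(z))\cdot(\text{denominator powers})$, so $|q(z)|$ can be rewritten as $|v(z)|^{1/d}$; and \eqref{realforspecnrm}, \eqref{realforspecnrm1} are the restrictions to $\F=\R$, where Corollary~\ref{maxeig} part~\emph{1} tells us the relevant $\x$ are real, i.e. $z\in R'=R\cap\R$, and $R'$ coincides with the real zeros of $zq(z)-p(z)$ since $\bar z=z$ there.

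\textbf{The main obstacle} I expect is part~\emph{4}: proving that the identical vanishing of $zv(z)-u(z)$ forces exactly the two listed normal forms, and in particular pinning down the precise reciprocity constraint $b=-e^{-s\i}c^{-1}$ on the two roots — this is a genuine classification argument about when the rational map $\bar r\circ r$ is the identity, requiring care with degree drops in $u,v$ (cancellation between $p$ and $q$) and with the normalization coming from the $\bar{\,\cdot\,}$ operation, rather than a routine computation. The translation steps in parts \emph{1}--\emph{3} and the norm formulas in \emph{5}--\emph{8} are, by contrast, bookkeeping with binomial coefficients and absolute values that I would carry out but not belabor here.
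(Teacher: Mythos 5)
Your route is essentially the paper's: translate the anti-fixed-point system into one-variable equations via $z=x_1/x_0$, identify $\bF$ and $\bH=\bar\bF\circ\bF$ with $r$ and $g=\bar r\circ r$ on the Riemann sphere, classify the degenerate case by the degree argument forcing $r$ to be a M\"obius map with $\phi$ having one or two roots, and extract the norm from the critical-value computation $\lambda=\|\x\|^{-(d-2)}$ with $\|\x\|^2=|x_0|^2(1+|z|^2)$ and $|x_0|=|q(z)|^{1/(2-d)}$. Parts \emph{1}--\emph{5} and \emph{7} of your sketch match the paper's proof step for step, and your deferral of the full classification in part \emph{4} to the analysis of \S\ref{sec:excepcase} is exactly what the paper does.

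There is, however, one concrete gap in your treatment of parts \emph{6} and \emph{8}. You justify (\ref{specnrmSform1}) by arguing that on $R$ the quantity $|q(z)|$ can be rewritten as $|v(z)|^{1/d}$, so that $\lambda_q=\lambda_v$ there. That only shows the right-hand side of (\ref{specnrmSform1}) is at least $\|\cS\|_\sigma$. But $R_1$ is in general strictly larger than $R$ (the fixed points of $\bH$ need not be anti-fixed points of $\bF$; several of the paper's numerical examples exhibit roots of $zv-u$ that fail (\ref{antifixqub1})), so you must also show that $\lambda_v(z)\le\|\cS\|_\sigma$ for every $z\in R_1\setminus R$, otherwise the maximum over $R_1$ could exceed the spectral norm. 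The paper closes this with inequality (\ref{FHest2}) of Lemma \ref{FHest}: any nonzero fixed point $\y$ of $\bH$ satisfies $\|\y\|^{-(d-2)}\le\|\cS\|_{\sigma,\F}$, and $\lambda_v(z)$ is exactly $\|\y\|^{-(d-2)}$ for the fixed point with slope $z$. You cite Lemma \ref{FHest} only for the equality at the maximizer, not for this upper bound over all of $R_1$; without it, parts \emph{6} and \emph{8} do not follow. The fix is immediate once the inequality is invoked, but as written the argument is incomplete.
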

\begin{proof}  We use the notations and the results of Lemma \ref{qubspecnrmlem}.

\noindent
\emph{1}.  The assumption  $\mathbf{s}\ne (0,\ldots,0,s_d)\trans$ is equivalent to the assumption that the polynomial $q(z)$ is not zero identically.  Observe next that if $x_0\ne 0$ then
	\[ z=\frac{x_1}{x_0},\; \bar z=\frac{\bar x_1}{\bar x_0},\; \frac{F_1(x_0,x_1)}{x_0^{d-1}}=p(z),\; \frac{F_0(x_0,x_1)}{x_0^{d-1}}=q(z).\]
	Recall that
	\begin{eqnarray*}
		dF_1=\frac{\partial f}{\partial x_1}=\frac{\partial (x_0^{d}\phi(\frac{x_1}{x_0}))}{\partial x_1}=x_0^{d-1}\phi'(\frac{x_1}{x_0}),\\
		dF_0=\frac{\partial f}{\partial x_0}=\frac{\partial (x_0^{d}\phi(\frac{x_1}{x_0}))}{\partial x_0}=dx_0^{d-1}\phi(\frac{x_1}{x_0})-x_1x_0^{d-2}\phi'(\frac{x_1}{x_0}).
	\end{eqnarray*}
	These equalities yield (\ref{relpqrphi}).
	
	Suppose first that (\ref{antifixqub}) holds.  Assume that $(x_0,x_1)\trans\ne \0$.
	Suppose first that $x_0\ne 0$.  Then $q(z)=\frac{F_0(x_0,x_1)}{x_0^{d-1}}=\frac{\bar x_0}{x_0^{d-1}}\ne 0$.  Furthermore
	\[\bar z= \frac{F_1(x_0,x_1)}{F_0(x_0,x_1)}=\frac{F_1(x_0,x_1)x_0^{-d+1}}{F_0(x_0,x_1)x_0^{-d+1}}=r(z).\]
	This shows the conditions (\ref{antifixqub1}).  Assume now that $x_0=0$. So $0=F_0(0,x_1)=s_{d-1}x_1^{d-1}$.  As $\x=(0,x_1)\ne \0$ it follows that $x_1\ne 0$.
	Hence $s_{d-1}=0$.  Furthermore $\bar x_1=F_1(0,x_1)=s_d x_1^{d-1}$.  Hence $s_d\ne 0$.  This shows (\ref{antifixqubinfty}).
	
	Conversely, suppose first that (\ref{antifixqubinfty}) holds.  Let $\x=(0,x_1)\trans$.  Then $F_0(\x)=0$ and $F_1(\x)=s_dx_1^{d-1}$.  Then choose $x_1$ to be a nonzero solution of $s_d x_1^{d-1}=\bar x_1$.  That is $|x_1|=|s_d|^{-\frac{1}{d-2}}$ and $\arg x_1=-\frac{\arg s_d}{d}$.  Thus $(0,x_1)\trans\ne \0$ is a solution of (\ref{antifixqub}).
	
	Assume second that (\ref{antifixqub1}) holds and $q(z)\ne 0$.  The above arguments show that there exists a nonzero solution to the equation $x_0^{d-1}q(z)=\bar x_0$.  Let $x_1=x_0 z, \x=(x_0,x_1)\trans$.  Then $F_0(x_0,x_1)=x_0^{d-1}q(z)=\bar x_0$.  Observe next \[F_1(x_0,x_1)=x_0^{d-1}p(z)=x_0^{d-1}q(z)\bar z=F_0(x_0,x_1)\bar z=\bar x_0 \bar z=\bar x_1.\] 
	Hence (\ref{antifixqub}) holds for 
	\begin{equation}\label{formbx}
	\x=x_0(1,z)\trans, \quad |x_0|=|q(z)|^{\frac{1}{2-d}},\;\arg x_0=-\frac{\arg q(z)}{d}.
	\end{equation}
	
	\noindent
	\emph{2}.  Assume that $\bar p(z), \bar q(z), \bar r(z) $ and $g(z)$  are defined in (\ref{defbarpqf}).   The definitions of $p(z),q(z)$ given by (\ref{fzfor}) yield the identities (\ref{defuz}) and (\ref{defvz}).
	Let
	\[u(z)=\sum_{k=0}^{(d-1)^2} u_kz^k, \quad v(z)=\sum_{k=0}^{(d-1)^2} v_kz^k.\]
	Then
	\begin{equation}\label{uvdfor}
	u_{(d-1)^2}=\sum_{j=0}^{d-1} {d-1\choose j}\bar s_{j+1}s_{d}^j s_{d-1}^{d-1-j},\;
	v_{(d-1)^2}= \sum_{j=0}^{d-1} {d-1\choose j}\bar s_{j}s_{d}^j s_{d-1}^{d-1-j}.
	\end{equation}
	Hence the polynomial $zv(z)-u(z)$ is of at most degree $(d-1)^2+1$.  Suppose $\x=(x_0,x_1)\trans, x_0\ne 0$ satisfies (\ref{fixpointH}): 
	\begin{equation}\label{fixpointH1}
	\overline{\mathbf{F}}(\bF(\x))=\x.
	\end{equation}
	Since we assumed that $x_0\ne 0$ it follows that 
	\[u(z)=\frac{\bar\bF_1(\bF(\x))}{x_0^{(d-1)^2}}, \quad v(z)=\frac{\bar\bF_0(\bF(\x))}{x_0^{(d-1)^2}}=x_0^{-(d-1)^2 +1}\ne 0.\]
	Hence the system (\ref{fixpointH1}) implies
	\[\bar r( r(z))=\frac{\bar\bF_1(\bF(\x))x_0^{-(d-1)^2}}{\bar\bF_0(\bF(\x))x_0^{-(d-1)^2}}=\frac{x_1}{x_0}=z.\]
	This yields (\ref{poleqfixpoint}), which is a polynomial equation of degree at most $(d-1)^2 +1$.
	
	\noindent
	\emph{3}.  As $\bar z= z$ for $z\in\R$ we deduce that $R'$ is the set of the real zeros of $zq(z)-p(z)=0$.  Assume that $z\in\R$.  Suppose first that $q(z)\ne 0$.  Part \emph{1} yields that there exists $(x_0,x_1)\trans, x_0\ne 0$ which satisfies  (\ref{antifixqub}).
	Hence $(x_0,x_1)\trans$ is a fixed point of $\bH$.  Thus $z=\frac{x_1}{x_0}\in R_1$ and $v(z)\ne 0$.  Suppose second that $q(z)=0$.  The equality $zq(z)-p(z)=0$ yields that $p(z)=0$. The equalities (\ref{defuz}) and (\ref{defvz}) show that $u(z)$ and $v(z)$ are homogeneous polynomials in variables $p(z),q(z)$.  Hence $u(z)=v(z)=0$.  In particular, $z\in R_1$. 
	
	\noindent
	\emph{4}.  The analysis of the exceptional cases is given in \S\ref{sec:excepcase}.
	
	\noindent
	\emph{5}.  Assume that $d>2$, and  $\mathbf{s}=(s_0,\ldots,s_d)\trans\in\C^{d+1}$ is the corresponding vector to $\cS=[\cS_{i_1,\ldots,i_d}]\in\rS^d\C^2$.
	Clearly, $\|\cS\|_{\sigma}\ge |\cS\times\otimes^d\e_1|=|s_{1,\ldots,1}|=|s_d|$.  
	
	Without loss of generality we may assume that $\cS\ne 0$.  Hence $\|\cS\|_{\sigma}>0$.
	Suppose first $\mathbf{s}=(0,\ldots,0,s_d)\trans$. Then $q(z)\equiv 0$ and $R=\emptyset$.   Hence (\ref{specnrmSform}) is equivalent to part \emph{4} of Lemma
	\ref{qubspecnrmlem}.
	
	Let $R_0=\{z\in R,\;q(z)=0\}$.  Clearly $|s_d|\ge \frac{|q(z)|}{(1+|z|^2)^{\frac{d-2}{2}}}$ for $z\in R_0$.  Thus it suffices to show that 
	\[\|\cS\|_{\sigma}= \max\left\{|s_d|, \max\{\frac{|q(z)|}{(1+|z|^2)^{\frac{d-2}{2}}}, z\in R\setminus R_0\}\right\}.\]
	
	The proof of Theorem \ref{estnumbanteig} yields that $(x_0,x_1)\trans$ is a critical point 
	of $f(\x):=\Re(\cS\times\otimes^d\x)$ on $\rS(2,\C)$ if and only if it satisfies  (\ref{antifixqub}).  Part \emph{1} yields that $(x_0,x_1)\trans\in \rS(2,\C)$ satisfies (\ref{antifixqub}) if and only if either $z=\frac{x_1}{x_0}\in R\setminus R_0$ or (\ref{antifixqubinfty}) holds.  Clearly $\|\cS\|_{\sigma}$ is a critical value of $f|\rS(2,\C)$.
	
	Suppose first that (\ref{antifixqubinfty}) holds.  Then the critical point of $f$ is $\y=(0,y_1)\trans, |y_1|=1$. The corresponding critical value of $f$ is $\pm |s_d|$, and $|s_d|\le \|\cS\|_{\sigma}$. 
	
	Assume second that $z\in R\setminus R_0$.
	Then $\x=(x_0,x_1)\trans$ is given by (\ref{formbx}) and satisfies (\ref{antifixqub}).  Clearly, $\|\x\|=|q(z)|^{\frac{1}{2-d}}\sqrt{1+|z|^2}$. (We do not assume that $\|\x\|=1$.)
	Let $\y=\frac{1}{\|\x\|}\x\in\rS(2,\C)$.  Then $\y$ is a critical point of $f|\rS(2,\C)$ which corresponds to the positive critical value 
	\begin{equation}\label{lamda:specnorm}
	\lambda_q(z)=\frac{1}{\|\x\|^{d-2}}=\frac{|q(z)|}{(\sqrt{1+|z|^2})^{d-2}}.
	\end{equation}
	Hence (\ref{specnrmSform}) holds.\\
	
	\noindent
	\emph{6}.  We now repeat the arguments of \emph{5} with corresponding modifications.  Let $R_{1,0}=\{z\in R_1, \;v(z)=0\}$.  Suppose that $z\in R_1\setminus R_{1,0}$.   Let $\x=(x_0,x_1)\trans$ and assume that $x_0\ne 0$ and $x_1=x_0 z$. 
	The assumption that $\bH(\x)=(H_0(\x),H_1(\x))=(x_0,x_1)$ yields that $H_0(\x)=x_0^{(d-1)^2}v(z)=x_0$.  Hence 
	\[|x_0|=|v(z)|^{-\frac{1}{(d-1)^2-1}}, \quad \arg x_0=-\frac{\arg v(z)}{(d-1)^2-1}.\]
	Clearly $H_1(\x)=x_0^{(d-1)^2}u(z)=x_0^{(d-1)^2}v(z)z=x_0z=x_1$.  Thus $\bH(\x)=\x$.  Let
	\begin{equation}\label{lamda:specnorm1}
	\lambda_v(z)=\frac{1}{\|\x\|^{d-2}}=(\frac{|v(z)|^{\frac{1}{(d-1)^2-1}}}{\sqrt{1+|z|^2}})^{d-2}=\frac{|v(z)|^{\frac{1}{d}}}{(\sqrt{1+|z|^2})^{d-2}}.
	\end{equation}
	The inequality (\ref{FHest2}) yields that $\lambda_v(z)\le \|\cS\|_{\sigma}$.  \emph{5}
	yields that if $|s_d|<\|\cS\|_{\sigma}$, then there exists $\x=(x_0,x_1)\trans, x_0\ne 0$ such that $\bF(\x)=\overline{\x}$ and $\|\cS\|_\sigma=\|\x\|^{-(d-2)}$.  Clearly, $\bH(\x)=\x$.  Hence (\ref{specnrmSform1}) holds.
	
	\noindent
	\emph{7}.  Assume that $d>2$, and  $\mathbf{s}=(s_0,\ldots,s_d)\trans\in\R^{d+1}$ is the corresponding vector to $\cS=[s_{i_1,\ldots,i_d}]\in\rS^d\R^2$.
	We now follow the notations and the arguments of \emph{5}.   It is enough to consider the case (\ref{Sassump}).  Then either $\|\cS\|_{\sigma}$ or $-\|\cS\|_{\sigma}$
	is a nonzero critical point of $\phi$ restricted to $\rS(2,\R)$.  Consider first a positive critical value of $\phi$.  The arguments of the proof of Theorem \ref{estnumbanteig} yields that such a critical point induces a nonzero solution to (\ref{antifixpoint}), which is equal to $\bF(\x)=\x$ since $\x=(x_0,x_1)\trans\in\R^2$.  For $x_0\ne 0$ we obtain that $t=z=\frac{x_1}{x_0}\in\R$.  In this case $t$ is a real solution of $p(t)-tq(t)=0, q(t)\ne 0$.  Consider second a negative critical values of $\phi$ restricted to $\rS(2,\R)$.  The arguments of the proof of Theorem \ref{estnumbanteig} yields that such a critical value induces a nonzero solution to $\bF(\x)=-\x$.   For $x_0\ne 0$ we obtain that $t=z=\frac{x_1}{x_0}\in\R$ satisfies
	the same equation $tq(t)-p(t)=0$.
	
	Vice versa, suppose that $t\in\R$ satisfies the equation $p(t)-tq(t)=0$ and $q(t)\ne 0$.  We claim that such $t$ induces a real solution to $\bF(\x)=\varepsilon\x$ for $\varepsilon\in\{-1,1\}$.
	As in \emph{1}  we need to find $x_0\in\R$ such that the equation $x_0^{d-1}q(t)=\varepsilon x_0$ has a nonzero real solution $x_0$.  Choosing $\varepsilon$ to be the sign of $q(t)$ we always have a positive solution $x_0$ to this equation.  Then we let $\x=x_0(1,t)\trans$.  Hence (\ref{realforspecnrm}) holds.
	
	\noindent
	\emph{8} follows from the arguments of \emph{6} and \emph{7}.\qed
\end{proof}

\begin{remark}\label{remark2}  It is straightforward to show using the arguments of the proof of Theorem \ref{computdqubspecnrm} that for $d>2$ every root of $zq(z)-p(z)=0, q(z)\ne 0$ corresponds to an eigenvector
	\begin{equation}\label{fixpointSFmap}
	\cS\times \otimes^{d-1}\y=\y, \quad \cS\in\rS^d\C^2, \y=(y_0,y_1)\trans \in\C^2, y_0\ne 0.
	\end{equation}
	Furthermore the inequality (\ref{FHest2}) holds.
\end{remark}
\section{The exceptional cases}\label{sec:excepcase}
In this section we discuss part \emph{4} of Theorem \ref{computdqubspecnrm}. 
Assume that $g(z)=\bar r(r(z))=z$ identically.  Recall that $r(z)$ can be viewed as a holomorphic map of the Riemann sphere.  The degree of this map is $\delta\in\N$ since $g$ is not a constant map.  Hence the degree of the map $g$ is $\delta^2$.  Since $g$ is the identity map and its degree is $1$.  Hence $\delta=1$ and $r(z)$ is a M\"obius map:
\[r(z)=\frac{az+b}{cz+d}, \quad ad-bc\ne 0.\]
Use the formula for $r(z)$ in (\ref{relpqrphi}) to deduce
\[\frac{1}{d}(\log \phi(z))'=\frac{1}{d}\frac{\phi'(z)}{\phi(z)}=\frac{az+b}{cz+d +z(az+b)}.\]
Let $l$ be the number of distinct roots of $\phi(z)$.  Then the logarithmic derivative of $\phi(z)$ has exactly $l$ distinct poles.  Compare that with the above formula of the logarithmic  derivative of $\phi$ we deduce that $\phi$ has either one (possibly) multiple root or two distinct roots.  

Assume first that $\phi(z)$ has one root of multiplicity $k$: $\phi(z)=A(z+a)^k$ for $k\in[d]$.  Then 
\[r(z)=\frac{k}{(d-k)z+d a}, \quad \bar r(z)=\frac{k}{(d-k)z+d \bar a}.\]
In this case $g(z)\equiv z$ if and only if $k\in[d-1]$ and $a=0$.
Clearly, if $\phi(z)=Az^k$, where $A\ne 0$, then $g(z)\equiv z$.
In this case $\cS\times\otimes^d \x=A{d\choose k} x_0^{d-k}x_1^k$.  To find $\|\cS\|_{\sigma,\F}$ we need to maximize $|A|{d\choose k}|x_0|^{d-k}|x_1|^k$ subject to
$|x_0|^2+|x_1|^2=1$.  The maximum is obtained for 
$|x_0|^2=1-\frac{k}{d}, |x_1|^2=\frac{k}{d}$.  This proves (\ref{geqidenSform}).

Assume now that $\phi(z)$ has two distinct zeros: $\phi(z)=A(z+a)^p(z+b)^q$, where $a\ne b$, $p,q\in\N$ and $p+q\le d$.  Then
\[r(z)=\frac{(z+a)^{p-1}(z+b)^{q-1}\big((p+q)z+pb+qa\big)}{(z+a)^{p-1}(z+b)^{q-1}\big(d(z+a)(z+b)-z\big((p+q)z+pb+qa\big)\big)}.\]
Suppose first that $p+q<d$.  In order that $r(z)$ will be a M\"obius transformation
we need to assume that $(p+q)z+pb+qa$ divides $(z+a)(z+b)$.  This is impossible,
since $\phi'$ has exactly $p+q-2$ common roots with $\phi$.  Hence we are left with the case $p+q=d$.  In this case
\begin{equation}\label{rxfroexcase}
r(z)=\frac{dz+\alpha}{\beta z+dab}, \quad \alpha=pb+qa, \beta=d(a+b)-\alpha, p+q=d.
\end{equation}
The assumption that $g(z)\equiv z$ is equivalent to the following matrix equality
\begin{equation}\label{defmatA}
\bar A A=\gamma^2 I_2, \quad A=\left[\begin{array}{cc}d&\alpha\\\beta&dab\end{array}\right] \quad \gamma\ne 0.
\end{equation}
Taking the determinant of the above identity we deduce that $\gamma^4=|\det A|^2>0$.  So $\gamma^2=\pm \tau^{-2}$ for some $\tau>0$.
Let $B=\tau A$.  Suppose first that $\gamma^2=\tau^{-2}$.  Then $\bar B$ is the inverse of $B$.  So $\det B=\delta, |\delta|=1$.  Then
\begin{equation}\label{analeq}
dab=\delta d,\; d=\delta d\bar a\bar b,\;-\alpha=\delta\bar \alpha, -\beta=\delta \bar\beta.
\end{equation}
Hence $ab=\delta$.  

We next observe that if we replace $\phi(z)$ with $\phi_s(z):=\phi(e^{s\i }z)$ for any $s\in\R$ we will obtain the following relations
\[p_s(z)=e^{s \i}p(e^{s \i}z), q_s(z)=q(e^{s \i}z), \overline{p_s}(z)=e^{-s \i}p(e^{-s \i}z), \overline{q_s}(z)=\bar q(e^{-s \i}z).\]
Let 
\[r_s(z)=\frac{p_s(z)}{q_s(z)}, \;\overline{r_s}(z)=\frac{\overline{p_s}(z)}{\overline{q_s}(z)},\; g_s(z)=\overline{r_s}(r_s(z)).\]
A straightforward calculation shows that $g_s(z)=z$ for all $s\in\R$.  Note the two roots of $\phi_s(z)$ are $-ae^{-s\i}, -be^{-s\i}$.  Hence we can choose $s$ such that $\delta=-1$.  Assume for simplicity of the exposition this condition holds for $s=0$, i.e., for $\phi$.  So $\alpha$ and $\beta$ are real.  In particular $a+b$ is real.  So $b=-a^{-1}$ and $a-a^{-1}$ is real.  Hence $a$ is real and also $b$ is real.

Suppose now that $\gamma^2=-\tau^2$.  Then $\bar B=-B^{-1}$. So $\det B=\delta, |\delta|=1$.  Then
\begin{equation}\label{analeq1}
dab=-\delta d,\; d=-\delta d\bar a\bar b,\;\alpha=\delta\bar \alpha, \beta=\delta \bar\beta.
\end{equation}
By considering $\phi_t$ as above we may assume that $\delta=1$ which gives again that $a\in\R\setminus\{0\}$ and $b=-a^{-1}$.  This proves (\ref{geqiden1}).

Vice versa, assume that $\phi(z)$ is of the form (\ref{geqiden1}), where $a\in\R\setminus\{0\}$ and $b=-\frac{1}{a}$.  We claim that $\bar r(r(z))\equiv z$.
The above arguments show that (\ref{rxfroexcase}) holds.  Furthermore
\begin{equation}\label{excforalphbet}
ab=-1,\;\alpha=(d-p)a -\frac{p}{a}, \beta=pa -\frac{d-p}{a}, \; p\in[d-1].
\end{equation}
Consider the matrix $A$ given by (\ref{defmatA}). Note the trace of this matrix is zero.  We claim that $A$ is not singular.  Indeed
\begin{equation}\label{formdetA}
\det A=-(d^2 +\alpha\beta)=-(d^2 - (d-p)^2 -p^2 +p(d-p)(a^2+a^{-2}))=-p(d-p)(a+a^{-1})^2.
\end{equation}
Hence $A$ has two distinct eigenvalues $\{\gamma,-\gamma\}$ and is diagonalizable.
As $A$ is a real matrix we get that $\bar A A= A^2=\gamma^2 I_2$.  Therefore 
$\bar r(r(z))\equiv z$.  As $\bar r_s(r_s(z))\equiv z$ for each $s\in\R$ we deduce that for each $\phi$ of the form (\ref{geqiden1}) $zq(z)-p(z)$ is a zero polynomial.

We now give an algorithm to compute $\|\cS\|_{\sigma,\F}$ corresponding to $\phi$ of the form (\ref{geqiden1}).  Clearly
\begin{equation}\label{formFxcom}
\cS\times\otimes^d\x=A(x_1+ce^{-s\i}x_0)^p(x_1-c^{-1}e^{-s\i}x_0)^{d-p}
, \; c\in\R\setminus\{0\},p\in[d-1],\x=(x_0,x_1)\trans.
\end{equation}
Assume first that $\cS\in\rS^d\R^2$.  It is straightforward to show that $A\in\R$ and $e^{s\i}=\pm 1$ unless $c\ne \pm 1$ and $d=2p$.  These two cases are equivalent to the assumption that 
\begin{equation}\label{specformFx1}
\cS\times\otimes^d\x= A(x_1+cx_0)^p(x_1-c^{-1}x_0)^{d-p}, \; c\in\R\setminus\{0\},d\in[d-1],\x=(x_0,x_1)\trans.
\end{equation}
In the case that $c=\pm 1$ and $d=2p$ we have another possibility
\begin{equation}\label{specformFx2}
\cS\times\otimes^d\x=A(x_1^2+x_0^2)^p, \quad p\in\N.
\end{equation}

We first consider the case (\ref{specformFx2}).  A straightforward calculation shows that $r(z)\equiv z$.  Clearly $\bar r(z)=r(z)$.  Thus the polynomials $zq(z)-z$ and $zv(z)-z$ are zero polynomials.  In that case $\|\cS\|_{\sigma,\R}=|A|$ and $\cS\times \otimes^{2p}\x$ has value $A$ for all $\x\in\rS(2,\R)$. 

Next we consider the case (\ref{specformFx1}).  Then $r(z)$ is given by 
\begin{equation}\label{formrzcase1}
r(z)=\frac{dz+\alpha}{\beta z -d}, \quad \alpha=(d-p)c-pc^{-1}, \beta=pc -(d-p)c^{-1}.
\end{equation}
Hence the equation $r(z)=z$ boils down to the quadratic equation
\begin{equation}\label{realquadeq}
\beta z^2-2d z-\alpha=0.
\end{equation}
The equality (\ref{formdetA}) yields that $d^2+\alpha\beta>0$.  Hence the above quadratic equation has two or one real roots.  Let $R'$ the set of the roots of (\ref{realquadeq}).  Then part \emph{6} of Theorem \ref{computdqubspecnrm} implies the equality (\ref{realforspecnrm1}).

Assume now that $\rS_s\in\rS^d\C^2$ is induced by $\phi$ of the form (\ref{geqiden1}). The equality (\ref{formFxcom}) yields that $\|{\cS}_s\|_{\sigma}=\|\cS_0\|_{\sigma}$.  Thus it is enough to find $\|\cS_0\|_{\sigma}$.  For simplicity of notation we let $\cS=\cS_0$.  In this case $\r(z)$ is of the form (\ref{formrzcase1}).  The equation $r(z)=\bar \z=x-y\i$ boils down to one equation in two variables $x,y\in\R$:
\begin{equation}\label{xyeq}
\beta(x^2+y^2)-2dx-\alpha=0, \quad z=x+y\i.
\end{equation}
Suppose first that $\beta=0$.   Then  $x=-\frac{\alpha}{2d}$ and $y$ is a free variable.  For $\beta\ne 0$ the pair $(x,y)$ lie on a circle
\[ (x-\frac{d}{\beta})^2 +y^2=\frac{d^2+\alpha\beta}{\beta^2}.\] 
This means that we replaced the problem of finding the maximum of $|\cS\times \otimes^d\x|$ over the three dimensional sphere $\rS(2,\C)$ with another problem with three (or two) real parameters $|x_0|,x,y$ satisfying the condition
$|x_0|^2(1+x^2+y^2)=1$. 

We now suggest following simple approximations to find $\|\cS\|_{\sigma}$ using the case \emph{4} (or \emph{5}) of Theorem \ref{computdqubspecnrm}.   Assume first that $A=1$, i.e., $\cS$ corresponds to $\phi(z)=(z+c)^p(z-c^{-1})^{d-p}$.  Choose $\varepsilon>0$ such that our desired approximation should be not more than $2\epsilon$.  Let $\phi(z,\varepsilon)=\phi(z)+\varepsilon$.
Let $\cS(\varepsilon)\in\rS^d\R^2$ be the symmetric tensor corresponding to $\phi(z,\varepsilon)$.  It is straightforward to show that 
\[\|\cS(\varepsilon)-\cS\|_{\sigma}=\|\cS(\varepsilon)-\cS\| = \varepsilon.\] 
Hence  $\|\cS\|_{\sigma}\in [\|\cS(\varepsilon)\|_{\sigma}-\varepsilon,\|\cS(\varepsilon)\|_{\sigma}+\varepsilon]$.  

Observe next that $\phi'(z,\varepsilon)=\phi'(z)$.  Hence a common root of $\phi(z,\varepsilon)$ and 
$\phi'(z,\varepsilon)$ can not be a common root of $\phi(z)$ and $\phi'(z)$.  Hence
$\phi(z,\varepsilon)$ and $\phi'(z,\varepsilon)$ can have at most one common root.  As $d\ge 3$ we deduce that
$\cS(\varepsilon)$ does not satisfy the assumptions of part \emph{3} of Theorem \ref{computdqubspecnrm}.  Apply the case \emph{4} (or \emph{5}) of Theorem \ref{computdqubspecnrm} to compute $\|\cS(\varepsilon)\|_{\sigma}$ within precision $\varepsilon$.  The value of  $\|\cS(\varepsilon)\|_{\sigma}$ gives the value of $\|\cS\|_{\sigma}$ with precision $2\varepsilon$.

A disadvantage of this approximation that $\|\cS(\varepsilon)\|$ depends on $\varepsilon$.  We now consider the case where $S\in\rS^d\R^2$ corresponds to  $\phi(z)=\frac{1}{A}(z+c)^p(z-c^{-1})^{d-p}$, where $A>0$ and $\|\cS\|=1$.
Let $\cT=[\cT_{i_1,\ldots,i_d}]\in\rS^d\R^2$ corresponding to $\phi(z)=1$.  That is $\cT_{1,\ldots,1}=1$ and all other entries of $\cT$ are zero.    Then we let 
\[\cS(\varepsilon)=\frac{1}{\|\cS+\varepsilon\cT\|}(\cS+\varepsilon \cT).\]
It is not difficult to show that $\|\cS(\varepsilon)-\cS\|\le \varepsilon$.  Hence $\|\cS(\varepsilon)-\cS\|_{\sigma}\le \varepsilon$ and $\|\cS\|_{\sigma}\in [\|\cS(\varepsilon)\|_{\sigma}-\varepsilon, \|\cS(\varepsilon)\|_{\sigma}+\varepsilon]$.

In the following three examples below we consider $\cS(m)\in \rS^d\R^2$ corresponding to 
\begin{equation}\label{examexceptS}
\phi(z,m)=\frac{1}{A(m)}(z+1)^m(z-1)^m=\frac{1}{A(m)}(z^2-1)^m, \; \|\cS(m)\|=1,\; d=2m,
\end{equation}
where $A(m)>0$.
Note that $\cS(m)\times \otimes^{2m}\x=\frac{1}{A(m)}(x_1^2-x_0^2)^m$, and $|x_1^2-x_0^2|\le |x_1|^2 +|x_0|^2$, and equality holds if $x_1$ and $\i x_0$ have the same
argument. Hence $\|\cS\|_{\sigma}=\frac{1}{A(m)}$.  Note that $|x_1^2-x_0^2|=x_1^2+x_0^2$ for $x=(x_0,x_1)\trans \in\R^2$ and $x_0x_1=0$.  Hence $\|\cS(m)\|_{\sigma,\R}=\|\cS(m)\|_{\sigma}=\frac{1}{A(m)}$.

Let $\cS^{\varepsilon}(m)$ be the perturbation of $\cS(m)$ given by 
\[\frac{1}{A(m)}\big((z^2-1)^m -(z^{2m}+(-1)^m)\big)+\frac{1}{A(m)}\big(\sqrt{1-\varepsilon}z^{2m} +(-1)^m\sqrt{1+\varepsilon}\big).\]  
Note that $\|\cS^{\varepsilon}(m)\|=1$.  Furthermore
\[\|\cS(m)-\cS^{\varepsilon}(m)\|=\frac{1}{A(m)}\sqrt{\big(\sqrt{1-\varepsilon}-1\big)^2 +\big(\sqrt{1+\varepsilon}-1\big)^2}\approx \frac{\varepsilon}{\sqrt{2}A(m)}.\] 
\begin{exm}\label{special:case:exm:d:4} 
	\[d=2m=4, \quad A(2)=  \sqrt{\frac{8}{3}}, \quad \|\cS(2)\|_{\sigma}=\sqrt{\frac{3}{8}}\approx 0.61237243569.\]
	For $\varepsilon=\{0.5,0.1,0.05,0.01,0.005,0.001,0.0005,0.0001\}$, we calculate the real and complex spectral norms for $\cS^{\varepsilon}(2)$, the computational results are shown in Table \ref{table:special:case:exm:d:4}. 	
	
	\begin{table}\caption{Computational results for Example \ref{special:case:exm:d:4}. } \label{table:special:case:exm:d:4}
		\centering
		\begin{scriptsize}
			\begin{tabular}{|c|c|c||c|c|c|c|c|c|c|c|} \hline
				$\varepsilon$ &   $\|\cS^{\varepsilon}(2)\|_{\sigma,\R}$ & $\|\cS^{\varepsilon}(2)\|_{\sigma}$  & 	$\varepsilon$ &   $\|\cS^{\varepsilon}(2)\|_{\sigma,\R}$ & $\|\cS^{\varepsilon}(2)\|_{\sigma}$  \\ \hline 
				0.5& 0.75 & 0.75 & 0.1 & 0.64226 & 0.64226 \\ \hline 
				0.05& 0.62750 & 0.62750 & 0.01 & 0.61543 & 0.61543 \\ \hline 
				0.005& 0.61390 & 0.61390 & 0.001 & 0.61268 & 0.61268 \\ \hline 	
				0.0005& 0.61253 & 0.61253 & 0.0001 & 0.61240 & 0.61240\\ \hline 				
			\end{tabular} 
		\end{scriptsize}
	\end{table}	 
	
\end{exm}

\begin{exm}\label{special:case:exm:d:6}
	\[d=2m=6, \quad A(3)=  \frac{4}{\sqrt{5}}, \quad \|\cS(3)\|_{\sigma}=\frac{\sqrt{5}}{4}\approx 0.55901699437.\]
	For $\varepsilon=\{0.5,0.1,0.05,0.01,0.005,0.001,0.0005,0.0001\}$, we calculate the real and complex spectral norms for $\cS^{\varepsilon}(3)$, the computational results are shown in Table \ref{table:special:case:exm:d:6}.

	\begin{table}\caption{Computational results for Example \ref{special:case:exm:d:6}. } \label{table:special:case:exm:d:6}
		\centering
		\begin{scriptsize}
			\begin{tabular}{|c|c|c||c|c|c|c|c|c|c|c|} \hline
				$\varepsilon$ &   $\|\cS^{\varepsilon}(3)\|_{\sigma,\R}$ & $\|\cS^{\varepsilon}(3)\|_{\sigma}$  & 	$\varepsilon$ &   $\|\cS^{\varepsilon}(3)\|_{\sigma,\R}$ & $\|\cS^{\varepsilon}(3)\|_{\sigma}$  \\ \hline 
				0.5& 0.68465  &  0.68465 & 0.1 &  0.58630 & 0.58630  \\ \hline 
				0.05&  0.57282 &  0.57282 & 0.01 & 0.56181  & 0.56181 \\ \hline 
				0.005& 0.56041  & 0.56041  & 0.001 & 0.55930  & 0.55930  \\ \hline 	
				0.0005& 0.55916 & 0.55916  & 0.0001 & 0.55904  & 0.55904 \\ \hline 				
			\end{tabular} 
		\end{scriptsize}
	\end{table}	 
	
\end{exm}

\begin{exm}\label{special:case:exm:d:8}
	\[d=2m=8, \quad A(4)=  \sqrt{\frac{128}{35}}, \quad \|\cS(4)\|_{\sigma}=\sqrt{\frac{35}{128}}\approx 0.52291251658.\]
	For $\varepsilon=\{0.5,0.1,0.05,0.01,0.005,0.001,0.0005,0.0001\}$, we calculate the real and complex spectral norms for $\cS^{\varepsilon}(4)$, the computational results are shown in Table \ref{table:special:case:exm:d:8}. 	
	
	\begin{table}\caption{Computational results for Example \ref{special:case:exm:d:8}. } \label{table:special:case:exm:d:8}
		\centering
		\begin{scriptsize}
			\begin{tabular}{|c|c|c||c|c|c|c|c|c|c|c|} \hline
				$\varepsilon$ &   $\|\cS^{\varepsilon}(4)\|_{\sigma,\R}$ & $\|\cS^{\varepsilon}(4)\|_{\sigma}$  & 	$\varepsilon$ &   $\|\cS^{\varepsilon}(4)\|_{\sigma,\R}$ & $\|\cS^{\varepsilon}(4)\|_{\sigma}$  \\ \hline 
				0.5&  0.64043  & 0.64043   & 0.1 &  0.54844  & 0.54844   \\ \hline 
				0.05&  0.53583  &   0.53583 & 0.01 &  0.52552  & 0.52552  \\ \hline 
				0.005&  0.52422  &  0.52422  & 0.001 &   0.52317  &  0.52317   \\ \hline 	
				0.0005&  0.52304 &  0.52304  & 0.0001 &  0.52294  & 0.52294   \\ \hline 				
			\end{tabular} 
		\end{scriptsize}
	\end{table}	 
	
\end{exm}

\section{The case $n>2$}\label{sec:nge3}
We now discuss briefly the complexity of finding $\|\cS\|_{\sigma}$, where $\cS\in \rS^d\C^n$, for a fixed $n>2$ and $d\in\N$.  Recall that $\rS^d\C^n$ is a vector space of dimension ${n+d-1\choose n-1}=O(d^{n-1})$.  As discussed in \S\ref{sec:critpts} we need to find the nonzero solutions of (\ref{antifixpoint}).
To use algebraic geometry it is more convenient to consider the system (\ref{fixpointH}).  
The proof of Theorem \ref{estnumbanteig}  that the number of different solutions of the system (\ref{fixpointH}) counted with multiplicities  is  $(d-1)^{2n}$ if $\cS$ is not singular.  Let $\mathcal{A}_d=[\mathcal{A}_{i_1,\ldots,i_d}]$ be the \emph{identity} symmetric tensor, i.e.,
$\mathcal{A}_{i_1,\ldots,i_d}=1$ if $i_1=\cdots=i_d$ and $\mathcal{A}_{i_1,\ldots,i_d}=0$ otherwise.
Clearly $\mathcal{A}_d$ is nonsingular.
Then $\bH(\y)=\mathcal{A}_{(d-1)^2+1}\times \otimes^{(d-1)^2}\y$.  Hence the system $\bH(\y)=\y$ has exactly $(d-1)^{2n}$ distinct solutions.  Assume that $\cS(t)\in \rS^d\C^n$ is a continuous function in $t\in[0,1]$ such that $\cS(0)=\mathcal{A}_d$ and $\cS(1)=\cS$.  Assume that $\cS$ is nonsingular.  Since the variety of the singular symmetric tensors in $\rS^d\C^n$ is of codimension at least $1$ \cite{FO14}, with probability $1$
each $\cS(t), t\in (0,1)$ is nonsingular.  Thus we can apply the homotopy method of finding all  $(d-1)^{2n}$ fixed points of  $\bH(\y,t):=\overline{\cS(t)}\times \otimes^{d-1}(\cS(t)\times\otimes^{d-1}\y)$ \cite{BHSW06,BCSS98}.  On the other hand, if $\cS\in\rS^d\C^n$ is singular then while carrying out the homotopy method we either have that for some $\cS(t)$ the set of fixed points of $\bH(\y,t)$ is infinite, or some of the fixed points ``escape to infinity". 
(This follows from the fact that homogenizing the system $\bH(\y)-\y=\0$ to $\bH(\y)-y_0^{(d-1)^2}\y=\0$, where $(y_0,\y\trans)\trans\in \P\C^{n+1}$, we get a solution $(0,\y\trans)\trans\in \P\C^{n+1}$.
This corresponds to a solution of $\bH(\y)-\y=\0$ at infinity.)
That is, the homotopy method fails.

In particular we conjecture:
\begin{con}\label{polconjspecnrmS}  Assume that $2<n\in\N$ is fixed.  Suppose that $\cS\in\rS^d\C^n$ is not singular.  Then with probability $1$ we can find in polynomial time in $d$
	all fixed points of $\bH$ within $\varepsilon$ approximation.  In particular, we can compute $\|\cS\|_{\sigma}$ within $\varepsilon$ approximation  in polynomial time in $d$  with probability $1$.
\end{con}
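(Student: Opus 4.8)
\medskip\noindent\textbf{A proof strategy for Conjecture~\ref{polconjspecnrmS}.}
The idea is to compute $\|\cS\|_{\sigma}$ by numerical polynomial homotopy continuation and to argue that, with probability $1$ over the randomization, the continuation runs in time polynomial in $d$. First I would reduce the problem to locating zeros. By the inequality $\|\y\|^{-(d-2)}\le\|\cS\|_{\sigma}$ of Lemma~\ref{FHest}, valid for every nonzero fixed point $\y$ of $\bH$, together with the fact from the proof of Theorem~\ref{estnumbanteig} that a maximizing anti-eigenvector $\x$ produces the fixed point $\y=\|\cS\|_{\sigma}^{-1/(d-2)}\x$ of $\bH$ realizing equality, one has the exact formula
\[
\|\cS\|_{\sigma}=\max\{\,\|\y\|^{-(d-2)}:\ \bH(\y)=\y,\ \y\ne\0\,\}.
\]
Since $\cS$ is nonsingular, the proof of Theorem~\ref{estnumbanteig} shows that $\mathbf{K}(\y):=\bH(\y)-\y$ is a proper map whose zero set $\mathbf{K}^{-1}(\0)$ has exactly $(d-1)^{2n}$ points counted with multiplicity; moreover every nonzero $\y\in\mathbf{K}^{-1}(\0)$ satisfies $\|\y\|\ge\|\cS\|_{\sigma}^{-1/(d-2)}$, so these points are bounded away from $\0$ and evaluating $\|\y\|^{-(d-2)}$ amplifies errors only polynomially. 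Hence it suffices to approximate all $(d-1)^{2n}$ fixed points of $\bH$ to precision $\varepsilon$ and then take the displayed maximum, and $(d-1)^{2n}=O(d^{2n})$ is polynomial in $d$ for fixed $n$.

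For the continuation I would use the total-degree start system coming from the identity tensor $\mathcal{A}_d$, for which $\bH(\y)=\mathcal{A}_{(d-1)^2+1}\times\otimes^{(d-1)^2}\y$ has $i$-th component $y_i^{(d-1)^2}$; the start system $\bH(\y)=\y$ then has the $(d-1)^{2n}$ explicit solutions whose coordinates equal $0$ or a $\big((d-1)^2-1\big)$-th root of unity, and at each of them the Jacobian of $\mathbf{K}$ is diagonal with entries $-1$ or $(d-1)^2-1$, hence nonsingular for $d\ge 3$. I would connect $\mathcal{A}_d$ to $\cS$ along $\cS(t)=(1-t)\gamma\mathcal{A}_d+t\cS$ with a random phase $\gamma\in\C$ (the standard ``$\gamma$-trick'') and track the $(d-1)^{2n}$ paths $\y(t)$ solving $\bH(\y(t),t)=\y(t)$ by an Euler predictor with Newton corrections. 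Two bad events have to be excluded for $t\in(0,1)$: a path crossing, which can occur only when $\cS(t)$ lies on the discriminant hypersurface where the $\bH$-system loses a simple zero (a proper subvariety of $\rS^d\C^n$ containing the singular locus of \cite{FO14}), and a path escaping to infinity, which by the homogenization argument given above forces $\bH(\y)=\0$ with $\y\ne\0$, i.e.\ $\cS(t)$ singular. A generic $\gamma$ avoids both for all $t\in(0,1)$ with probability $1$, so all $(d-1)^{2n}$ paths are smooth, disjoint and bounded; since $\cS$ is nonsingular, and with probability $1$ has simple $\bH$-fixed points, the paths terminate at the $(d-1)^{2n}$ distinct target fixed points.

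The cost is then the number of paths, $O(d^{2n})$, times the cost of tracking one path to accuracy $\varepsilon$, which by the Shub--Smale / B\"urgisser--Cucker theory of adaptive path-following is $O\big(\mathrm{poly}(d)\cdot\log(1/\varepsilon)\big)$ provided the condition number $\mu\big(\cS(t),\y(t)\big)$ stays polynomially bounded along the path, since the number of predictor--corrector steps is controlled by the integral of $\mu$ along the path (its ``condition-metric length''). Establishing this polynomial bound is the main obstacle---and the reason the statement is only a conjecture: the systems $\bH_{\cS}$ do not form a generic dense family but lie on the image of the Veronese-type map $\cS\mapsto\bar{\bF}\circ\bF$, so the Beltr\'an--Pardo average-case analysis cannot be invoked off the shelf. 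I would attempt to recover the bound by combining the explicit good conditioning of the start system $\mathcal{A}_d$ above with a co-area / volume estimate bounding, for $\cS$ drawn from the Haar measure on $\{\,\cS\in\rS^d\C^n:\|\cS\|=1\,\}$ (or for the random $\gamma$), the probability that the segment $\cS(t)$ comes $\delta$-close to the discriminant, in the spirit of \cite{BCSS98}. Granting such an estimate, the algorithm outputs all fixed points of $\bH$ to precision $\varepsilon$ in time polynomial in $d$ and $\log(1/\varepsilon)$ with probability $1$, and the displayed maximum yields $\|\cS\|_{\sigma}$ within $\varepsilon$, as claimed.
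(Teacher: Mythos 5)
The statement you set out to prove is stated in the paper as a \emph{conjecture}, and the paper offers no proof of it: \S\ref{sec:nge3} contains only an informal discussion of exactly the strategy you describe --- a homotopy in $\cS$ from the identity tensor $\mathcal{A}_d$ to $\cS$, tracking the $(d-1)^{2n}$ solutions of $\bH(\y,t)=\y$, with the failure modes (an infinite fixed-point set, or paths escaping to infinity) tied to $\cS(t)$ meeting the singular locus, which has codimension at least one by \cite{FO14}. Your write-up follows the same route and supplies several correct details the paper leaves implicit: the formula $\|\cS\|_{\sigma}=\max\{\|\y\|^{-(d-2)}:\bH(\y)=\y,\ \y\ne\0\}$ for $d>2$ (the inequality from Lemma~\ref{FHest} combined with the fixed point realizing equality constructed in the proof of Theorem~\ref{estnumbanteig}), the explicit start solutions of the total-degree system and the nonsingularity of the Jacobian of $\mathbf{K}$ at them for $d\ge 3$, and the $\gamma$-trick to keep the segment off the discriminant for $t\in(0,1)$.

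That said, what you have is a strategy and not a proof, and you correctly locate the gap yourself. Genericity of $\gamma$ yields that the paths are smooth, disjoint and bounded with probability $1$, but it gives no \emph{quantitative} lower bound on the distance of $\cS(t)$ to the discriminant, hence no bound on the condition number along the paths, hence no bound on the number of predictor--corrector steps; "probability $1$" alone does not imply "polynomial time." The Shub--Smale/Beltr\'an--Pardo analysis that would supply such a bound averages over the full space of polynomial systems of the given degree, whereas here the systems $\bH_{\cS}-\mathrm{id}$ sweep out only the image of the quadratic map $\cS\mapsto\bar\bF\circ\bF$, a thin subvariety; no off-the-shelf co-area estimate controls, within that subvariety, the measure of the set of $\cS$ (or $\gamma$) for which the segment comes $\delta$-close to the discriminant. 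That missing estimate is precisely the content of Conjecture~\ref{polconjspecnrmS}, so your argument should be framed, as the paper frames its own discussion, as evidence for the conjecture rather than a proof of it.
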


We now discuss briefly the computational aspect of the problem if a given $\cS\in\rS^d\C^n$ is nonsingular.  This problem is equivalent to the problem if the homogeneous system $\bF(\y)=\0$ has only the trivial solution.   As we discussed above, this is equivalent to the statement that the system $\bF(\y)-\y=\0$ has $(d-1)^n$ distinct isolated solutions counting with
multiplicities.  Let $\cS(t)\in\rS^d\C^n, t\in [0,1]$ be defined as above.  Then, as explained above, $\cS(t)$ is nonsingular for $t\in [0,1)$ with probability $1$. 
Thus we can apply the homotopy method of finding all  $(d-1)^{n}$ fixed points of  $\bF(\y,t):=\cS(t)\times\otimes^{d-1}\y$ \cite{BHSW06, BCSS98}. 
If we can carry out the homotopy method for $t\in[0,1]$ then $\cS$ is nonsingular.  On the other hand, if $\cS\in\rS^d\C^n$ is singular then while carrying out the homotopy method we either have that for some $\cS(t)$ the set of fixed points of $\bF(\y,t)$ is infinite, or some of the fixed points ``escape to infinity". 
Thus we conjecture that with probability $1$ we can decide if $\cS$ is nonsingular in polynomial time.

\appendix
\section{Examples}\label{sec:examples}
In this section, we give some numerical examples of applications of Theorem \ref{computdqubspecnrm}. All the examples are real symmetric $d$-qubits since we expect to compute both the complex and real spectral norms of a given tensor $\cS$. 
In Theorem \ref{computdqubspecnrm}, we showed that the spectral norm of a given $d$-qubit can be found by solving the polynomial equation $zv(z)-u(z) =0$ (see (\ref{poleqfixpoint})), which is of degree $(d-1)^2+1$ at most.  In what follows that we assume that the polynomial $zv(z)-u(z)$ is not a zero polynomial.  (That is, we are not dealing with the exceptional cases that are discussed in \S\ref{sec:excepcase}.)  

In this paper we use Bertini \cite{BHSW06} (version 1.5, released in 2015), which is a well developed software to find the set $R_1$ and its real subset $R_1'$, which are given by (\ref{defR1}). We also find the subsets $R$  and $R'$,  which are given by (\ref{defR}). We calculate the parameters $\lambda_q$ and $\lambda_v$ which are given in (\ref{lamda:specnorm}) and (\ref{lamda:specnorm1}) respectively. We use formulas (\ref{specnrmSform}), (\ref{specnrmSform1}) to find $\|\cS\|_{\sigma}$, and formulas (\ref{realforspecnrm}), (\ref{realforspecnrm1}) to find $\|\cS\|_{\sigma,\R}$.


All the computation are implemented with Matlab R2012a on a MacBook Pro 64-bit OS X (10.9.5) system with 16GB memory and 2.3 GHz Intel Core i7 CPU. In the display of the computational results, only four decimal digits are shown. The default parameters in Bertini are used to solve the polynomial equation $zv(z)-u(z) =0$. 
Since all examples only take few seconds we will not show the computing time. 

In the following examples we specify only the nonzero entries of the symmetric tensor $\cS$.  We also assume that the symmetric tensor $\cS\in\rS^d\R^2$ is a state, i.e., $\|\cS\|=1$.

\begin{exm}\emph{\cite{Nie14}} \label{exm:1:d:qubit} Given tensor $\cS\in\rS^3\R^2$ with: 
	$$\cS_{1,1,1} =  0.3104, ~\cS_{2,2,2} =   0.2235,~\cS_{1,1,2}=-0.4866,~\cS_{1,2,2}=-0.2186.$$ 
	The polynomial $zv(z)-u(z)$ has degree 5. It has 5 roots, 3 of them are real and the other 2 are complex. The computational results are shown in Table \ref{table:exm:1:d:qubit}.  Clearly $|s_{3}| = |\cS_{2,2,2}|=0.2235$, and we have $R=R_1$, $R'=R_1'$. By using formulas (\ref{specnrmSform}), (\ref{specnrmSform1}), (\ref{realforspecnrm}), (\ref{realforspecnrm1}), we get:
	\begin{equation*}
	\begin{aligned}
	\|\cS\|_{\sigma~~} &  = \max\{|s_3|, \lambda_q(z), z\in R \} = \max\{|s_3|, \lambda_v(z), z\in R_1 \} = 0.7027,\\
	\|\cS\|_{\sigma,\R} & = \max\{|s_3|,\lambda_q(z),z\in R'\}= \max\{|s_3|,\lambda_v(z),z\in R_1'\} = 0.6205.
	\end{aligned}
	\end{equation*}

	\begin{table}\caption{Computational results for Example \ref{exm:1:d:qubit}. } \label{table:exm:1:d:qubit}
		\centering
		\begin{scriptsize}
			\begin{tabular}{|c|r|c|c|c||c|r|c|c|c|c|} \hline
				No. &  $z$\quad\quad\quad\quad  & $\F$ & $\lambda_q(z)$ & $\lambda_v(z)$   & No. &  $z$\quad\quad\quad\quad  & $\F$ &   $\lambda_q(z)$ & $\lambda_v(z)$ \\ \hline 
				1& 0.9157 & $\R$   &  0.5635  & 0.5635 & 4 &  
				-0.0240+ 0.7928i &$\C$&      0.7027 & 0.7027\\ \hline              
				2& -5.9838  & $\R$ &      0.2791  & 0.2791& 5& -0.0240 - 0.7928i &$\C$&       0.7027 & 0.7027 \\ \hline             
				3& -0.4063  & $\R$ &     0.6205 &  0.6205  & &&&& \\ \hline             
			\end{tabular} 
		\end{scriptsize}
	\end{table}	
	
\end{exm}

\begin{exm}  \emph{\cite{FLTN16}}  \label{exm:1:B:d:qubit} Given tensor $\cS\in\rS^3\R^2$ with: 
	$$\cS_{1,1,2} =  \frac{1}{2}, ~\cS_{2,2,2} = -\frac{1}{2}.$$
	The polynomial $zv(z)-u(z)$ has degree 4.  It has 4 roots, 2 of them are real and the other 2 are complex.  The computational results are shown in Table \ref{table:exm:1:B:d:qubit}. Clearly $|s_{3}| = |\cS_{2,2,2}|=0.5$, and we have $R=R_1$, $R'=R_1'$. By using formulas (\ref{specnrmSform}), (\ref{specnrmSform1}), (\ref{realforspecnrm}), (\ref{realforspecnrm1}), we get:	
	\begin{equation*}
	\begin{aligned}
	\|\cS\|_{\sigma~~} & = \max\{|s_3|, \lambda_q(z), z\in R \} = \max\{|s_3|, \lambda_v(z), z\in R_1 \} =  0.7071,\\
	\|\cS\|_{\sigma,\R} & = \max\{|s_3|,\lambda_q(z),z\in R'\} = \max\{|s_3|,\lambda_v(z),z\in R_1'\} =0.5.
	\end{aligned}
	\end{equation*}

	\begin{table}\caption{Computational results for Example \ref{exm:1:B:d:qubit}. } \label{table:exm:1:B:d:qubit}
		\centering
		\begin{scriptsize}
			\begin{tabular}{|c|c|c|c|c||c|c|c|c|c|} \hline
				No. &  $z$   & $\F$ &  $\lambda_q(z)$ & $\lambda_v(z)$    & No. &  $z$   & $\F$ &  $\lambda_q(z)$ & $\lambda_v(z)$   \\ 
				\hline 1 &  0.5774 & $\R$ & 0.5   & 0.5   & 3 &    -0.5774 & $\R$ & 0.5   & 0.5      
				\\  \hline
				2 &    1i & $\C$ & 0.7071 & 0.7071  & 4 &  -1i & $\C$ &  0.7071 & 0.7071  \\ \hline 
			\end{tabular} 
		\end{scriptsize}
	\end{table}	
\end{exm}

\begin{exm}\label{exm:2:d:qubit} Given tensor $\cS\in\rS^3\R^2$ with: 
	$$\cS_{1,1,1} = \frac{1}{\sqrt{5}}, ~\cS_{2,2,2} =  \frac{1}{2\sqrt{5}},~\cS_{1,1,2}=-\frac{1}{2\sqrt{5}},~\cS_{1,2,2}=-\frac{1}{\sqrt{5}}.$$ 
	The polynomial $zv(z)-u(z)$ has degree  5.  It has 5 roots, 3 of them are real and the other 2 are complex.  The computational results are shown in Table \ref{table:exm:2:d:qubit}.  Clearly $|s_{3}| = |\cS_{2,2,2}|=\frac{1}{2\sqrt{5}}$, and we have $R=R_1$, $R'=R_1'$. By using formulas (\ref{specnrmSform}), (\ref{specnrmSform1}), (\ref{realforspecnrm}), (\ref{realforspecnrm1}), we get:
	\begin{equation*}
	\begin{aligned}
	\|\cS\|_{\sigma~~} & = \max\{|s_3|, \lambda_q(z), z\in R \} = \max\{|s_3|, \lambda_v(z), z\in R_1 \} =  0.7071,\\
	\|\cS\|_{\sigma,\R} & = \max\{|s_3|,\lambda_q(z),z\in R'\} = \max\{|s_3|,\lambda_v(z),z\in R_1'\} =0.5000.
	\end{aligned}
	\end{equation*}

	\begin{table}\caption{Computational results for Example \ref{exm:2:d:qubit}. } \label{table:exm:2:d:qubit}
		\centering
		\begin{scriptsize}
			\begin{tabular}{|c|r|c|c|c||c|r|c|c|c|} \hline
				No. &  $z$\quad\quad   & $\F$ & $\lambda_q(z)$ & $\lambda_v(z)$ & No. &  $z$\quad & $\F$ & $\lambda_q(z)$ & $\lambda_v(z)$ \\ 
				\hline
				1 & 1.2413   & $\R$ & 0.5000 & 0.5000 & 4 &  -1i  & $\C$ & 0.7071 & 0.7071  \\  \hline
				2 &   -0.1558   & $\R$  & 0.5000 & 0.5000 & 5 &  1i  & $\C$  & 0.7071 & 0.7071  \\ \hline
				3 & -2.5855  &  $\R$ & 0.5000 & 0.5000 &   &     & &   &\\  \hline 
			\end{tabular} 
		\end{scriptsize}
	\end{table} 
	
\end{exm}

\begin{exm} \label{exm:3:d:qubit} Given tensor $\cS\in\rS^4\R^2$ with: 
	$$\cS_{1,1,1,1} = \frac{2}{11};\cS_{1,1,1,2} = -\frac{1}{11};\cS_{2,2,1,1}= \frac{4}{11}; \cS_{2,2,2,1} = -\frac{2}{11}; \cS_{2,2,2,2} = \frac{1}{11}.$$ 
	The polynomial $zv(z)-u(z)$ has degree 10.  It has 10 roots, 4 of them are real and the other 6 are complex.  The computational results are shown in Table \ref{table:exm:3:d:qubit}.  Clearly $|s_{4}| = |\cS_{2,2,2,2}|=\frac{1}{11}$. For the 10 roots $z$, (\ref{antifixqub1}) fails to satisfy for $z_i,~i\in\{3,4,8,9\}$, so we have $R= \{z_i,i\in\{1,2,5,6,7,10\}\}$. The four real roots all satisfy $zq(z)-p(z)=0$, so we have $R'=R_1'$. By using formulas (\ref{specnrmSform}), (\ref{specnrmSform1}), (\ref{realforspecnrm}), (\ref{realforspecnrm1}), we get:
	\begin{equation*}
	\begin{aligned}
	\|\cS\|_{\sigma~~} &  = \max\{|s_4|, \lambda_q(z), z\in R \} = \max\{|s_4|, \lambda_v(z), z\in R_1 \} =  0.8872,\\
	\|\cS\|_{\sigma,\R} & = \max\{|s_4|,\lambda_q(z),z\in R'\} = \max\{|s_4|,\lambda_v(z),z\in R_1'\} =0.8872.
	\end{aligned}
	\end{equation*}
	
	\begin{table}\caption{Computational results for Example \ref{exm:3:d:qubit}.} \label{table:exm:3:d:qubit}
		\centering
		\begin{scriptsize}
			\begin{tabular}{|c|r|c|c|c||c|r|c|c|c|c|} \hline
				No. &    $z$\quad\quad\quad\quad  & $\F$ & $\lambda_q(z)$ & $\lambda_v(z)$  & No. &    $z$\quad\quad\quad\quad  & $\F$ & $\lambda_q(z)$ & $\lambda_v(z)$  \\ 
				\hline 
				1& 0.8217  & $\R$  & 0.3543 & 0.3543 & 6 & 0.1045    & $\R$   & 0.1632   &  0.1632  \\ \hline              
				2 & 0.8070+0.2388i & $\C$ & 0.3540&  0.3540 & 7 &-1.0376     &   $\R$ & 0.8872   &    0.8872              \\  \hline   
				3& 4.6051+0.9267i &  $\C$    & 0.2484 &  0.0838 &   8& 0.1990-0.3520i   &  $\C$ & 0.0639   &  0.1894            \\  \hline 
				4 & 0.1990+0.3520i  &  $\C$  & 0.0639  &   0.1894&    9 & 4.6051-0.9267i     & $\C$& 0.2484  & 0.0838             \\  \hline 
				5 & 5.6114  & $\R$  & 0.0270  & 0.0270   &  10 & 0.8070-0.2388i  &    $\C$& 0.3540 & 0.3540     \\  \hline   
			\end{tabular} 
		\end{scriptsize}
	\end{table} 
	
\end{exm}

\begin{exm}\emph{\cite[Example 6.1]{AMM10}} \label{exm:9:d:qubit} Given tensor $\cS\in\rS^4\R^2$ with: 
	$$\cS_{1,1,1,1} = \sqrt{\frac{1}{3}};\quad \cS_{1,2,2,2} = \frac{1}{2}\sqrt{\frac{2}{3}}.$$ 
	The polynomial $zv(z)-u(z)$ has degree 10. It 10 roots, 4 of them are real and the other 6 are complex.  The computational results are shown in Table \ref{table:exm:9:d:qubit}.  Clearly, $|s_{4}| = |\cS_{2,2,2,2}|=0$ and we have $R= R_1$, $R'=R_1'$. By using formulas (\ref{specnrmSform}), (\ref{specnrmSform1}), (\ref{realforspecnrm}), (\ref{realforspecnrm1}), we get:	
	\begin{equation*}
	\begin{aligned}
	\|\cS\|_{\sigma~~} &  = \max\{|s_4|, \lambda_q(z), z\in R \} = \max\{|s_4|, \lambda_v(z), z\in R_1 \} =  0.5774,\\
	\|\cS\|_{\sigma,\R} & = \max\{|s_4|,\lambda_q(z),z\in R'\} = \max\{|s_4|,\lambda_v(z),z\in R_1'\} =0.5774.
	\end{aligned}
	\end{equation*}
	According to \cite{AMM10}, $\|\cS\|_{\sigma}=\frac{1}{\sqrt{3}}$.
	
	\begin{table}\caption{Computational results for Example \ref{exm:9:d:qubit}.} \label{table:exm:9:d:qubit}
		\centering
		\begin{scriptsize}
			\begin{tabular}{|c|r|c|c|c||c|r|c|c|c|c|} \hline
				No. &  $z$\quad\quad\quad\quad   & $\F$ & $\lambda_q(z)$ & $\lambda_v(z)$ & No. &  $z$\quad\quad\quad\quad   & $\F$ & $\lambda_q(z)$ & $\lambda_v(z)$ \\ \hline 
				1& 0.5176           &  $\R$ & 0.5  &   0.5  &  6& 0 & $\R$&     0.5774& 0.5774 \\ \hline              
				2& 0.9659+1.6730i & $\C$  &      0.5 &   0.5&   7&-0.7071-1.2247i  & $\C$&   0.5774& 0.5774   \\ \hline            
				3&-0.2588+0.4483i & $\C$ &        0.5 &     0.5 &  8&-0.2588-0.4483i  & $\C$&       0.5 &   0.5    \\ \hline             
				4&-0.7071+1.2247i & $\C$ &   0.5774 & 0.5774  &   9& 0.9659-1.6730i   & $\C$&      0.5 &  0.5      \\ \hline          
				5&-1.9319   & $\R$ &      0.5 &    0.5    & 10& 1.4142     & $\R$&  0.5774 & 0.5774  \\ \hline           
			\end{tabular} 
		\end{scriptsize}
	\end{table} 
\end{exm}

\begin{exm} \label{exm:4:d:qubit} Given tensor $\cS\in\rS^5\R^2$ with:
	$$\cS_{1,1,1,1,1} = \frac{2}{\sqrt{574}};\cS_{1,1,1,1,2} =  \frac{-1}{\sqrt{574}};\cS_{1,1,1,2,2}= \frac{4}{\sqrt{574}}; $$
	$$\cS_{1,1,2,2,2} =  \frac{-6}{\sqrt{574}}; \cS_{1,2,2,2,2} =  \frac{-2}{\sqrt{574}}; \cS_{2,2,2,2,2} = \frac{5}{\sqrt{574}}. $$	
	The polynomial $zv(z)-u(z)$ has degree 17.   It has 17 roots, 9 of them are real and the other 8 are complex.  The computational results are shown in Table \ref{table:exm:4:d:qubit}.  Clearly $|s_{5}| = |\cS_{2,2,2,2,2}|=\frac{5}{\sqrt{574}}$. For the 17 roots $z$, (\ref{antifixqub1}) fails to hold for 12 roots, so we have $R= \{z_i,i\in\{1,6,9,10,16\}\}$. These five roots are also real, so we have $R'=R$. By using formulas (\ref{specnrmSform}), (\ref{specnrmSform1}), (\ref{realforspecnrm}), (\ref{realforspecnrm1}), we get:	
	\begin{equation*}
	\begin{aligned}
	\|\cS\|_{\sigma~~} & = \max\{|s_5|, \lambda_q(z), z\in R \} = \max\{|s_5|, \lambda_v(z), z\in R_1 \} =  0.6930,\\
	\|\cS\|_{\sigma,\R} & = \max\{|s_5|,\lambda_q(z),z\in R'\} = \max\{|s_5|,\lambda_v(z),z\in R_1'\} =0.6930.
	\end{aligned}
	\end{equation*}

	\begin{table}\caption{Computational results for Example \ref{exm:4:d:qubit}.} \label{table:exm:4:d:qubit}
		\centering
		\begin{scriptsize}
			\begin{tabular}{|c|r|c|c|c||c|r|c|c|c|} \hline
				No. &   $z$\quad\quad\quad\quad  & $\F$ & $\lambda_q(z)$ & $\lambda_v(z)$ & No. &  $z$\quad\quad\quad\quad  & $\F$ & $\lambda_q(z)$ & $\lambda_v(z)$\\ 
				\hline 
				1& 1.4857     & $\R$ & 0.2864  &    0.2864 &  10& 0.0928    & $\R$  &   0.0748  &    0.0748      \\  \hline      
				2& 0.3050                & $\R$ &   0.0845  &   0.0809& 11& -1.3611 -    0.6846i & $\C$    & 0.9249  &    0.5834  \\  \hline            
				3& 2.8803 +    0.6699i    &$\C$  & 0.7952  &    0.2302    & 12& -0.4087 -    0.4279i  & $\C$    & 0.3564  &    0.5650  \\  \hline        
				4& 0.1503     & $\R$  & 0.0750  &    0.0784     &  13& -3.7577   & $\R$   &  0.8736  &    0.2489  \\  \hline     
				5& 0.1379 +    0.2586i   & $\C$   & 0.0434  &   0.1501  & 14& 0.1379 -    0.2586i  & $\C$    & 0.0434  &    0.1501   \\  \hline         
				6& -15.466               & $\R$     &  0.2224  &    0.2224   & 15& 2.8803 -    0.6699i  & $\C$     & 0.7952  &    0.2302   \\  \hline          
				7& -0.4087 +    0.4279i  & $\C$    &  0.3564  &    0.5650  &  16& 0.2667                & $\R$    & 0.0819  &   0.0819  \\  \hline          
				8& -1.3611 +    0.6846i  & $\C$    & 0.9249  &    0.5834  & 17& 0.6923     & $\R$   & 0.0536  &    0.1882         \\  \hline
				9& -0.8795    & $\R$  &  0.6930  &    0.6930 &   &&&&    \\  \hline  
			\end{tabular} 
		\end{scriptsize}
	\end{table}  
	
\end{exm}

\begin{exm}\emph{\cite[Example 6.2(b)]{AMM10}} \label{exm:10:d:qubit} Given tensor $\cS\in\rS^5\R^2$ with:
	$$\cS_{1,1,1,1,1} = \frac{1}{\sqrt{1+A^2}}; \cS_{1,2,2,2,2} = \frac{A}{\sqrt{5(1+A^2)}}; A \approx 1.53154.$$  
	The polynomial $zv(z)-u(z)$ has degree 17.  It has 17 roots, 5 of them are real and the other 12 are complex. The computational results are shown in Table \ref{table:exm:10:d:qubit}. Clearly $|s_{5}| = |\cS_{2,2,2,2,2}|=0$. For the 17 roots $z$, (\ref{antifixqub1}) fails to hold for 4 roots $z_i,i\in\{2,8,12,16\}$, so we have $R=\{z_i,i\in[17]\backslash\{2,8,12,16\}\}$.  Furthermore $R'=R_1'$. By using formulas (\ref{specnrmSform}), (\ref{specnrmSform1}), (\ref{realforspecnrm}), (\ref{realforspecnrm1}), we get:	
	\begin{equation*}
	\begin{aligned}
	\|\cS\|_{\sigma~~} & = \max\{|s_5|, \lambda_q(z), z\in R \} = \max\{|s_5|, \lambda_v(z), z\in R_1 \} =  0.5492,\\
	\|\cS\|_{\sigma,\R} & = \max\{|s_5|,\lambda_q(z),z\in R'\}= \max\{|s_5|,\lambda_v(z),z\in R_1'\} =0.5492.
	\end{aligned}
	\end{equation*}
	According to \cite{AMM10}, $\|\cS\|_{\sigma}=\frac{1}{\sqrt{1+A^2}}$.

	\begin{table}\caption{Computational results for Example \ref{exm:10:d:qubit}.} \label{table:exm:10:d:qubit}
		\centering
		\begin{scriptsize}
			\begin{tabular}{|c|r|c|c|c||c|r|c|c|c|c|} \hline
				No. &  $z$\quad\quad\quad\quad   & $\F$ & $\lambda_q(z)$ & $\lambda_v(z)$ & No. &  $z$\quad\quad\quad\quad   & $\F$ & $\lambda_q(z)$ & $\lambda_v(z)$ \\ \hline 
				1& 0.6398 & $\R$& 0.3657 &  0.3657  & 10&-0.6398 &$\R$&  0.3657 & 0.3657      \\ \hline           
				2& 0.4115 +    0.4115i &$\C$&  0.3269 &  0.3269   &   11&-1.4729 -     1.4729i   &$\C$&     0.5258 & 0.5258    \\ \hline         
				3& 1.4729 +     1.4729i &$\C$&  0.5258 &  0.5258    & 12&-0.4115 -    0.4115i &$\C$&       0.3269&  0.3269 \\ \hline           
				4& 0 &$\R$&  0.5492& 0.5492 &    13&  -     1.8949i  &$\C$&     0.5457 & 0.5457   \\ \hline          
				5&    1.8949i &$\C$&  0.5457 & 0.5457           &  14&   -    0.6398i &$\C$&      0.3657 & 0.3657  \\ \hline  
				6&    0.6398i&$\C$&   0.3657 & 0.3657  & 15& 1.4729 -     1.4729i   &$\C$&     0.5258 &  0.5258   \\ \hline             
				7&-1.4729 +     1.4729i  &$\C$&  0.5258&  0.5258      &  16& 0.4115 -    0.4115i  &$\C$&      0.3269&  0.3269  \\ \hline       
				8&-0.4115 +    0.4115i &$\C$&  0.3269&  0.3269      &   17& 1.8949   &$\R$&     0.5457& 0.5457  \\ \hline     
				9&-1.8949 &$\R$&   0.5457& 0.5457  &&&&& \\ \hline   
			\end{tabular} 
		\end{scriptsize}
	\end{table}
\end{exm}

\begin{exm} \label{exm:5:d:qubit} Given tensor $\cS\in\rS^6\R^2$ with:
	$$\cS_{1,1,1,1,1,1} = \frac{1}{\sqrt{641}};\cS_{1,1,1,1,1,2} =  \frac{-1}{\sqrt{641}};\cS_{1,1,1,1,2,2}= \frac{2}{\sqrt{641}};$$ 
	$$\cS_{1,1,1,2,2,2} =  \frac{-3}{\sqrt{641}}; \cS_{1,1,2,2,2,2} = \frac{4}{\sqrt{641}}; \cS_{1,2,2,2,2,2} = \frac{5}{\sqrt{641}}; \cS_{2,2,2,2,2,2} =  \frac{-2}{\sqrt{641}}.$$  	
	The polynomial $zv(z)-u(z)$ has degree 26. It has 26 roots, 6 of them are real and the other 20 are complex.  The computational results are shown in Table \ref{table:exm:5:d:qubit}.  Clearly $|s_{6}| = |\cS_{2,2,2,2,2,2}|=\frac{2}{\sqrt{641}}$. For the 26 roots $z$, (\ref{antifixqub1}) fails to hold for 20 roots, so we have $R= \{z_i,i\in \{1,7,8,14,21,26\}\}$. 4 of the 6 real roots satisfy $zq(z)-p(z)=0$, so we have $R'=\{z_i,i\in\{ 1,8,14,26\}\}$. By using formulas (\ref{specnrmSform}), (\ref{specnrmSform1}), (\ref{realforspecnrm}), (\ref{realforspecnrm1}), we get:
	\begin{equation*}
	\begin{aligned}
	\|\cS\|_{\sigma~~} &  = \max\{|s_6|, \lambda_q(z), z\in R \} = \max\{|s_6|, \lambda_v(z), z\in R_1 \} =  0.6336,\\
	\|\cS\|_{\sigma,\R} & = \max\{|s_6|,\lambda_q(z),z\in R'\} = \max\{|s_6|,\lambda_v(z),z\in R_1'\} =0.6336.
	\end{aligned}
	\end{equation*}

	\begin{table}\caption{Computational results for Example \ref{exm:5:d:qubit}.} \label{table:exm:5:d:qubit}
		\centering
		\begin{scriptsize}
			\begin{tabular}{|c|r|c|c|c||c|r|c|c|c|c|} \hline
				No. &  $z$\quad\quad\quad\quad   & $\F$ & $\lambda_q(z)$ & $\lambda_v(z)$ & No. &  $z$\quad\quad\quad\quad   & $\F$ & $\lambda_q(z)$ & $\lambda_v(z)$ \\  
				\hline 
				1 & 1.8997     & $\R$          & 0.4503  &    0.4503 &  14& -0.8764   & $\R$&     0.6336  &    0.6336   \\  \hline           
				2 & 0.7256   & $\R$  &     0.0507  &    0.1951 &  15&-2.3660 -    0.5050i &  $\C$  &  0.7105  &    0.2176 \\  \hline             
				3& 0.2341 +    0.2263i  & $\C$  &  0.0136  &    0.0136   & 16& 0.1920 -    0.0393i  & $\C$ &   0.0213  &   0.0201     \\  \hline           
				4& 0.2284 +    0.2369i  & $\C$ &  0.0130  &    0.0138   & 17&-1.3863 -    0.7210i &  $\C$ &   0.8063  &    0.5087   \\  \hline            
				5& 7.5466 +     4.4315i &$\C$    &  2.3011  &    0.1821 & 18&-0.3753 -    0.5248i & $\C$ &     0.3004  &     0.4761  \\  \hline             
				6& 0.2460 +    0.3647i  & $\C$ &  0.0193  &    0.0631   &   19& 0.1379 -    0.2571i  & $\C$  & 0.0029  &    0.0365   \\  \hline  
				7& 0.2179 +    0.2468i  & $\C$ &  0.0121  &    0.0121  & 20& 7.5466 -     4.4315i & $\C$   &  2.3011  &    0.1821   \\  \hline            
				8& -6.0779              & $\R$   &    0.1831  &    0.1831 & 21& 0.2179 -    0.2468i  &  $\C$&  0.0121  &    0.0121  \\  \hline                
				9& 0.1379 +    0.2571i  &  $\C$ & 0.0029  &    0.0365   &  22& 0.2460 -    0.3647i  &  $\C$&  0.0193  &    0.0631  \\  \hline        
				10&-0.3753 +    0.5248i & $\C$ &     0.3004  &    0.4761  &  23& 4.1026     & $\R$&  1.2078  &    0.3138            \\  \hline         
				11&-1.3863 +    0.7210i &$\C$   &   0.8063  &    0.5087   &24& 0.2284 -    0.2369i  & $\C$ &  0.0130  &   0.0138              \\  \hline          
				12& 0.1920 +    0.0393i  & $\C$ &  0.0213  &    0.0201     &  25& 0.2341 -    0.2263i  &  $\C$& 0.0136  &   0.0136               \\  \hline       
				13& -2.366 +    0.5050i  & $\C$  &  0.7105  &   0.2176  &  26& 0.1865     &$\R$ &  0.0219  &   0.0219              \\  \hline 
			\end{tabular} 
		\end{scriptsize}
	\end{table} 
	
\end{exm}

\begin{exm}\emph{\cite[Example 6.3]{AMM10}} \label{exm:11:d:qubit} Given tensor $\cS\in\rS^6\R^2$ with:
	$$\cS_{1,1,1,1,1,2} = \frac{1}{2\sqrt{3}};\cS_{1,2,2,2,2,2} = \frac{1}{2\sqrt{3}}.$$ 
	The polynomial $zv(z)-u(z)$ has degree 25.  It has 25 roots, 7 of them are real and the other 18 are complex. The computational results are shown in Table \ref{table:exm:11:d:qubit}. Clearly $|s_{6}| = |\cS_{2,2,2,2,2,2}|=0$. 	For the 25 roots $z$, (\ref{antifixqub1}) fails to hold for 5 roots $z_i,i\in \{4,10,12,16,22\}$, so we have $R= \{z_i,i\in [25]\backslash \{4,10,12,16,22\}\}$. 6 of the 7 real roots satisfy $zq(z)-p(z)=0$, so we have 
	$R'=R\cap \R=\{z_i,i\in \{1,2,13,14,15,25\}\}$. By using formulas (\ref{specnrmSform}), (\ref{specnrmSform1}), (\ref{realforspecnrm}), (\ref{realforspecnrm1}), we get:
	\begin{equation*}
	\begin{aligned}
	\|\cS\|_{\sigma~~} &  = \max\{|s_6|, \lambda_q(z), z\in R\} = \max\{|s_6|, \lambda_v(z), z\in R_1 \} = 0.4714,\\
	\|\cS\|_{\sigma,\R} & = \max\{|s_6|,\lambda_q(z),z\in R'\} = \max\{|s_6|,\lambda_v(z),z\in R_1'\} =0.4714.
	\end{aligned}
	\end{equation*}
	According to \cite{AMM10}, $\|\cS\|_{\sigma}=\frac{\sqrt{2}}{3}$.
	
	\begin{table}\caption{Computational results for Example \ref{exm:11:d:qubit}.} \label{table:exm:11:d:qubit}
		\centering
		\begin{scriptsize}
			\begin{tabular}{|c|r|c|c|c||c|r|c|c|c|c|} \hline
				No. &  $z$\quad\quad\quad\quad   & $\F$ & $\lambda_q(z)$ & $\lambda_v(z)$ & No. &  $z$\quad\quad\quad\quad   & $\F$ & $\lambda_q(z)$ & $\lambda_v(z)$ \\ \hline  
				1& 1    &$\R$ & 0.4330 & 0.4330 &  14& -1.9319      &$\R$& 0.4714  &  0.4714   \\ \hline             
				2& 1.9319 &$\R$& 0.4714 & 0.4714   &   15& -0.5176   &$\R$&      0.4714 & 0.4714\\ \hline          
				3& 0.2929 +    0.2929i &$\C$&     0.4330& 0.4330 &  16& -0.7071 -  0.7071i &$\C$&     0.2887 & 0.2887 \\ \hline           
				4& 0.7071 +    0.7071i  &$\C$&    0.2887& 0.2887 & 17& -1.7071 -     1.7071i  &$\C$&   0.4330& 0.4330\\ \hline             
				5& 1.7071 +     1.7071i    &$\C$&  0.4330& 0.4330  & 18& -0.2929 -    0.2929i &$\C$&     0.4330 & 0.4330  \\ \hline             
				6&   0.5176i &$\C$&      0.4714&  0.4714 &   19&   -  1i&$\C$&      0.4330  & 0.4330  \\ \hline           
				7&   1i  &$\C$&    0.4330 & 0.4330 & 20&  -     1.9319i   &$\C$&    0.4714   & 0.4714  \\ \hline            
				8&  1.9319i     &$\C$&  0.4714 &  0.4714     & 21&  -    0.5176i    &$\C$&   0.4714&  0.4714     \\ \hline          
				9& -0.2929 +    0.2929i &$\C$&     0.4330 & 0.4330&   22& 0.7071 -    0.7071i   &$\C$&   0.2887& 0.2887 \\ \hline           
				10& -0.7071 +    0.7071i  &$\C$&    0.2887 & 0.2887& 23& 0.2929 -    0.2929i  &$\C$&    0.4330& 0.4330 \\ \hline              
				11& -1.7071 +     1.7071i    &$\C$&  0.4330& 0.4330  & 24& 1.7071 -     1.7071i &$\C$&     0.4330 & 0.4330 \\ \hline            
				12& 0 &$\R$& 0 & 0.2887& 25& 0.5176    &$\R$&     0.4714 &  0.4714 	\\ \hline             
				13& -1         &$\R$& 0.4330  & 0.4330         &   &&&&   \\ \hline 
			\end{tabular} 
		\end{scriptsize}
	\end{table}  
\end{exm}

\begin{exm} \label{exm:6:2:d:qubit} Given tensor $\cS\in\rS^7\R^2$ with:
	$$\cS_{1,1,1,1,1,1,1} = \frac{1}{\sqrt{37}};\cS_{1,1,1,1,1,1,2} =  \frac{-1}{\sqrt{74}};\cS_{1,1,1,1,1,2,2}=  \frac{1}{\sqrt{111}};\cS_{1,1,1,1,2,2,2} = \frac{1}{\sqrt{185}}; $$ 
	$$\cS_{1,1,1,2,2,2,2} = \frac{1}{\sqrt{185}}; \cS_{1,1,2,2,2,2,2} = \frac{1}{\sqrt{111}}; \cS_{1,2,2,2,2,2,2} =\frac{1}{\sqrt{74}}; \cS_{2,2,2,2,2,2,2} = \frac{1}{\sqrt{37}};$$  	   
	The polynomial $zv(z)-u(z)$ has degree 37.  It has 37 roots, 3 of them are real and the other 34 are complex.  The computational results are shown in Table \ref{table:exm:6:2:d:qubit}.  Clearly $|s_{7}| = |\cS_{2,2,2,2,2,2,2}|=\frac{1}{\sqrt{37}}$. For the 37 roots $z$, (\ref{antifixqub1}) fails to hold for 28 roots, so we have $R= \{z_i,i\in \{1,2,7,13,17,21,23,30,32\}\}$.  3 real roots all satisfy $zq(z)-p(z)=0$, so we have $R'=R\cap \R=\{z_i,i\in \{1,2,21\}\}$. By using formulas (\ref{specnrmSform}), (\ref{specnrmSform1}), (\ref{realforspecnrm}), (\ref{realforspecnrm1}), we get:	
	\begin{equation*}
	\begin{aligned}
	\|\cS\|_{\sigma~~} &  = \max\{|s_7|, \lambda_q(z), z\in R \} = \max\{|s_7|, \lambda_v(z), z\in R_1 \} = 0.8741,\\
	\|\cS\|_{\sigma,\R} & = \max\{|s_7|,\lambda_q(z),z\in R'\} = \max\{|s_7|,\lambda_v(z),z\in R_1'\} = 0.8741.
	\end{aligned}
	\end{equation*}

	\begin{table}\caption{Computational results for Example \ref{exm:6:2:d:qubit}.} \label{table:exm:6:2:d:qubit}
		\centering
		\begin{scriptsize}
			\begin{tabular}{|c|r|c|c|c||c|r|c|c|c|c|} \hline
				No. &  $z$\quad\quad\quad\quad   & $\F$ & $\lambda_q(z)$ & $\lambda_v(z)$ & No. &  $z$\quad\quad\quad\quad   & $\F$ & $\lambda_q(z)$ & $\lambda_v(z)$ \\ \hline 
				1& 1.2348    &$\R$&    0.8741  & 0.8741 & 20&-1.4583 -    0.2619i &$\C$&     0.1131  & 0.0750   \\ \hline               
				2& 0.1653    &$\R$&    0.0893 & 0.0893  & 21&-0.3919    &$\R$&     0.4121   &   0.4121         \\ \hline             
				3&-2.2667 +     1.2252i   &$\C$&   0.0404 & 0.0310 & 22&-2.0392 -    0.5786i &$\C$&    0.0292  & 0.0269    \\ \hline              
				4&0.1442 +    0.1391i  &$\C$&      0.0581&  0.1150 &  23&-2.1260 -    0.7952i  &$\C$&    0.0218 &  0.0218   \\ \hline             
				5&0.1046 +    0.2732i   &$\C$&    0.1120& 0.1851   & 24&-1.5868 -     1.4793i &$\C$&     0.0982  & 0.0496  \\ \hline           
				6&-1.5868 +     1.4793i   &$\C$&    0.0982 & 0.0496&  25& 0.0713 -    0.1854i &$\C$&     0.1062 &  0.1601   \\ \hline              
				7&0.0208  +    0.5582i  &$\C$&     0.3209 & 0.3209 &  26&-1.1085 -     1.4166i  &$\C$&    0.1252  & 0.0758  \\ \hline           
				8&-0.2315 +     1.5252i  &$\C$&     0.0482 &  0.0629& 27&-0.3327 -     1.5213i &$\C$&    0.0533&  0.0579     \\ \hline              
				9&-0.0557 +     1.4844i   &$\C$&   0.0955 & 0.0955  & 28&-0.2315 -     1.5252i &$\C$&     0.0482  &  0.0629  \\ \hline            
				10&0.0368 +     1.0175i   &$\C$&    0.2890  & 0.1871& 29&0.0368 -     1.0175i &$\C$&     0.2890&   0.1871    \\ \hline             
				11&0.0713 +    0.1854i   &$\C$&    0.1062 &  0.1601 &  30&0.0208  -    0.5582i &$\C$&     0.3209  &  0.3209  \\ \hline             
				12&-0.3327 +     1.5213i  &$\C$&    0.0533  & 0.0579& 31&-0.0557 -     1.4844i &$\C$&    0.0955   & 0.0955  \\ \hline                
				13&-0.4844 +     1.6068i  &$\C$&     0.0652&  0.0652& 32&-0.4844 -     1.6068i  &$\C$&    0.0652   &  0.0652 \\ \hline               
				14&-1.1085 +     1.4166i  &$\C$&     0.1252  &  0.0759 & 33&0.1046 -    0.2732i  &$\C$&    0.1120  &  0.1851    \\ \hline               
				15&-2.1908 +    0.9311i  &$\C$&    0.0224   & 0.0224   &  34&0.1442 -    0.1391i  &$\C$&     0.0581 &  0.1150   \\ \hline           
				16&0.1701 +    0.0760i   &$\C$&   0.0750     & 0.1158  &  35&-2.2667 -     1.2252i &$\C$&    0.0404   & 0.0310  \\ \hline           
				17&-2.1260 +    0.7952i   &$\C$&    0.0218   & 0.0218  & 36&0.1701 -   0.0760i  &$\C$&   0.0750   &   0.1158   \\ \hline            
				18&-1.4583 +    0.2619i &$\C$&      0.1131  & 0.0750   &  37&-2.1908 -    0.9311i  &$\C$&   0.0224    & 0.0224   \\ \hline  
				19&-2.0392 +    0.5786i &$\C$&    0.0292  & 0.0269   & &&&& \\ \hline  
			\end{tabular} 
		\end{scriptsize}
	\end{table}   
	
\end{exm}

\begin{exm}\emph{\cite[Example 6.4]{AMM10}} \label{exm:12:d:qubit} Given tensor $\cS\in\rS^7\R^2$ with:
	$$ \cS_{1,1,1,1,1,1,2} = \frac{1}{\sqrt{14}};\cS_{1,2,2,2,2,2,2} = \frac{1}{\sqrt{14}}.$$ 	
	The polynomial $zv(z)-u(z)$ has degree 36.  It has 36 roots, 6 of them are real and the other 30 are complex.  The computational results are shown in Table \ref{table:exm:12:d:qubit}.  Clearly $|s_{7}| = |\cS_{2,2,2,2,2,2,2}|=0$. For the 36 roots $z$, (\ref{antifixqub1}) fails to hold for 11 roots $z_i,i\in I$, so we have $R= \{z_i,i\in [36]\backslash I\}$, where $I=\{3,4,5,11,12,18,20,25,27,33,34\}$. 5 of the 6 real roots satisfy $zq(z)-p(z)=0$, so we have 
	$R'=R\cap \R=\{z_i,i\in \{1,2,19,21,36\}\}$. By using formulas (\ref{specnrmSform}), (\ref{specnrmSform1}), (\ref{realforspecnrm}), (\ref{realforspecnrm1}), we get:	
	\begin{equation*}
	\begin{aligned}
	\|\cS\|_{\sigma~~} &  = \max\{|s_7|, \lambda_q(z), z\in R \} = \max\{|s_7|, \lambda_v(z), z\in R_1 \} = 0.4508,\\
	\|\cS\|_{\sigma,\R} & = \max\{|s_7|,\lambda_q(z),z\in R'\} = \max\{|s_7|,\lambda_v(z),z\in R_1'\} =0.4508.
	\end{aligned}
	\end{equation*}
	
	\begin{table}\caption{Computational results for Example \ref{exm:12:d:qubit}.} \label{table:exm:12:d:qubit}
		\centering
		\begin{scriptsize}
			\begin{tabular}{|c|r|c|c|c||c|r|c|c|c|c|} \hline
				No. &  $z$\quad\quad\quad\quad   & $\F$ & $\lambda_q(z)$ & $\lambda_v(z)$ & No. &  $z$\quad\quad\quad\quad   & $\F$ & $\lambda_q(z)$ & $\lambda_v(z)$ \\ \hline  
				1& 1     &$\R$&    0.3307 & 0.3307&  19&-0.3964   &$\R$&     0.4406 & 0.4406 \\ \hline             
				2& 2.3570           &$\R$&  0.4508& 0.4508    & 20&-0.9914 -    0.1312i &$\C$&      0.2478& 0.2478 \\ \hline          
				3& 0 &$\R$&  0  &  0.2673 &  21&-2.5227                &$\R$&     0.4406& 0.4406  \\ \hline            
				4& 0.8791 +    0.4766i &$\C$&      0.2478 & 0.2478 & 22&-0.3432 -    0.2494i &$\C$&      0.4508& 0.4508 \\ \hline             
				5& 0.7249 +    0.6888i &$\C$&      0.2478 & 0.2478 & 23&-0.8090 -    0.5878i &$\C$&      0.3307& 0.3307 \\ \hline             
				6& 2.0409 +     1.4828i  &$\C$&     0.4406 & 0.4406  & 24&-1.9069 -     1.3854i   &$\C$&    0.4508& 0.4508  \\ \hline            
				7& 0.3207 +      0.2330i &$\C$&      0.4406& 0.4406   & 25&-0.4311 -    0.9023i   &$\C$&   0.2478&  0.2478  \\ \hline            
				8& 0.3090 +    0.9511i&$\C$&       0.3307& 0.3307  & 26&-0.1225 -      0.3770i   &$\C$&   0.4406 & 0.4406    \\ \hline           
				9& 0.1311 +     0.4035i &$\C$&      0.4508& 0.4508  &  27&-0.1816 -    0.9834i   &$\C$&    0.2478& 0.2478  \\ \hline          
				10& 0.7284 +     2.2417i&$\C$&       0.4508& 0.4508 & 28&-0.7796 -     2.3992i  &$\C$&     0.4406& 0.4406  \\ \hline             
				11&-0.1816 +    0.9834i&$\C$&       0.2478& 0.2478  & 29& 0.1311 -     0.4035i   &$\C$&    0.4508 & 0.4508 \\ \hline            
				12&-0.4311 +    0.9023i&$\C$&       0.2478& 0.2478  &  30& 0.3090 -    0.9511i   &$\C$&    0.3307& 0.3307 \\ \hline           
				13&-0.1225 +      0.3770i&$\C$&       0.4406 & 0.4406 &  31& 0.7284 -     2.2417i   &$\C$&    0.4508& 0.4508  \\ \hline           
				14&-0.7796 +     2.3992i&$\C$&       0.4406& 0.4406 &  32& 0.3207 -      0.2330i   &$\C$&    0.4406 & 0.4406 \\ \hline           
				15&-0.8090 +    0.5878i&$\C$&       0.3307& 0.3307 & 33& 0.7249 -    0.6888i  &$\C$&     0.2478& 0.2478 \\ \hline              
				16&-0.3432 +    0.2494i&$\C$&       0.4508& 0.4508 & 34& 0.8791 -    0.4766i &$\C$&      0.2478& 0.2478 \\ \hline            
				17&-1.9069 +     1.3854i &$\C$&      0.4508& 0.4508  & 35& 2.0409 -     1.4828i  &$\C$&    0.4406 & 0.4406  \\ \hline            
				18&-0.9914 +    0.1312i &$\C$&      0.2478& 0.2478 &  36& 0.4243  &$\R$&    0.4508 & 0.4508	\\ \hline   
			\end{tabular} 
		\end{scriptsize}
	\end{table} 
\end{exm}

\begin{exm}
	\label{exm:7:d:qubit} Given tensor $\cS\in\rS^8\R^2$ with:
	\begin{equation*}
	\begin{aligned}
	\cS_{1,1,1,1,1,1,1,1} & = \frac{1}{\sqrt{52}};\cS_{1,1,1,1,1,1,1,2} = \frac{1}{2\sqrt{26}};\cS_{1,1,1,1,1,1,2,2}= \frac{1}{\sqrt{182}}; \cS_{1,1,1,1,1,2,2,2}   =  \frac{-1}{2\sqrt{52}};\\
	\cS_{1,1,1,2,2,2,2,2} &  =  \frac{-1}{2\sqrt{52}}; \cS_{1,1,2,2,2,2,2,2}  =  \frac{1}{\sqrt{182}};\cS_{1,2,2,2,2,2,2,2} = \frac{-1}{2\sqrt{52}};\cS_{2,2,2,2,2,2,2,2}  = \frac{1}{\sqrt{52}}. 
	\end{aligned}
	\end{equation*}	 
	The polynomial $zv(z)-u(z)$ has degree 50.  It has 50 roots, 6 of them are real and the other 44 are complex.  The computational results are shown in Table \ref{table:exm:7:d:qubit}.  Clearly $|s_{8}| = |\cS_{2,2,2,2,2,2,2,2}|=\frac{1}{\sqrt{52}}$. For the 50 roots $z$, (\ref{antifixqub1}) fails to hold for 38 roots, so we have $R= \{z_i,i\in \{1,7,10,13,16,26,32,33,42,44,45,46\}\}$. 4 of the 6 real roots satisfy $zq(z)-p(z)=0$, so we have $R'=R\cap \R=\{z_i,i\in\{1,26,44,46\}\}$. By using formulas (\ref{specnrmSform}), (\ref{specnrmSform1}), (\ref{realforspecnrm}), (\ref{realforspecnrm1}), we get: 
	\begin{equation*}
	\begin{aligned}
	\|\cS\|_{\sigma~~} &  = \max\{|s_8|, \lambda_q(z), z\in R \} = \max\{|s_8|, \lambda_v(z), z\in R_1 \} = 0.7713,\\
	\|\cS\|_{\sigma,\R} & = \max\{|s_8|,\lambda_q(z),z\in R'\} = \max\{|s_8|,\lambda_v(z),z\in R_1'\} = 0.7713.
	\end{aligned}
	\end{equation*}

	\begin{table}\caption{Computational results for Example \ref{exm:7:d:qubit}.} \label{table:exm:7:d:qubit}
		\centering
		\begin{scriptsize}
			\begin{tabular}{|c|r|c|c|c||c|r|c|c|c|c|} \hline
				No. &  $z$\quad\quad\quad\quad   & $\F$ & $\lambda_q(z)$ & $\lambda_v(z)$ & No. &  $z$\quad\quad\quad\quad   & $\F$ & $\lambda_q(z)$ & $\lambda_v(z)$ \\ \hline   
				1& 1.1587    &$\R$&    0.2125 &  0.2125 &  26&-1.2070  &$\R$&     0.7713  & 0.7713  \\ \hline            
				2& 0.8721 +   0.0819i &$\C$&    0.0781  &  0.1757  &  27&-0.1650 -    0.0499i &$\C$&    0.0658  & 0.0959  \\ \hline              
				3& 0.6710 +  0.2000i &$\C$&     0.2161 &  0.2474  &  28& 3.2498 -    3.2320i  &$\C$&   0.0732& 0.0732    \\ \hline           
				4&2.4138 +    0.1986i &$\C$&     0.2289 & 0.0944   & 29&-0.1584 -    0.1218i &$\C$&    0.0342 &  0.1110  \\ \hline           
				5&0.4552  &$\R$&     0.2959 &  0.2384 &             30& 1.6006 -    4.6960i &$\C$&     0.2977 &  0.1037  \\ \hline
				6&3.0067 +     1.0869i  &$\C$&    0.1502 &  0.0822  &  31&-0.1569 -    0.2446i &$\C$&    0.0723 &   0.1837\\ \hline  
				7&3.2133 +     2.3777i &$\C$&    0.0707 & 0.0707   & 32&-0.2426 -     1.5964i &$\C$&     0.2144&   0.2144  \\ \hline            
				8&-0.0865   &$\R$&     0.0905 &  0.1123 & 33&-0.2436 -    0.5172i &$\C$&     0.3056 &   0.3056  \\ \hline              
				9&1.9392 +     2.5028i  &$\C$&    0.2103 & 0.0935    &  34&-0.5525 -    0.9157i &$\C$&     0.2502 &   0.2502 \\ \hline          
				10&0.3308 +    0.5789i  &$\C$&    0.4470 &  0.4470 &   35&-0.3519 -    0.7183i &$\C$&     0.3300 &   0.2266\\ \hline           
				11&0.3779 +     2.5267i &$\C$&     0.3744 &   0.1474 &  36&-0.4547 -    1.1690i&$\C$&     0.0638 &  0.1831  \\ \hline           
				12&-0.1053 +    0.1644i &$\C$&    0.0551  &  0.1338 &   37&-0.3127 -     1.2324i &$\C$&    0.0953  &  0.1742 \\ \hline            
				13&-0.2426 +     1.5964i  &$\C$&    0.2144 &  0.2144  &  38&-0.1312 -    0.9707i&$\C$&      0.2806  &  0.2451 \\ \hline  
				14&-0.1312 +    0.9707i &$\C$&     0.2806 &   0.2451 &   39&-0.1053 -    0.1644i &$\C$&    0.0551 &   0.1338 \\ \hline           
				15&-0.3127 +     1.2324i &$\C$&    0.0953 &  0.1742  &  40& 0.3779 -     2.5267i &$\C$&     0.3744 &   0.1474  \\ \hline            
				16&-0.2436 +    0.5172i &$\C$&     0.3056 & 0.3056  &  41& 0.5974 -     3.1012i&$\C$&      0.3610  &   0.1111 \\ \hline            
				17&-0.4547 +    1.1690i &$\C$&    0.0638 &  0.1831  &  42& 0.3308 -    0.5789i&$\C$&     0.4470 &   0.4470  \\ \hline          
				18&-0.5525 +    0.9157i &$\C$&     0.2502& 0.2502   & 43& 1.9392 -     2.5028i  &$\C$&    0.2103  & 0.0935   \\ \hline           
				19&-0.3519 +    0.7183i &$\C$&     0.3300&  0.2266   &  44&-0.1478  &$\R$&     0.0742  & 0.0742 \\ \hline          
				20&0.5974 +     3.1012i  &$\C$&    0.3610 &  0.1111   & 45& 3.2133 -     2.3777i &$\C$&    0.0707  & 0.0707    \\ \hline            
				21&-0.1569 +    0.2446i&$\C$&     0.0723 &  0.1837  & 46& 0.2848   &$\R$&     0.3181 &  0.3181   \\ \hline            
				22&1.6006 +     4.6960i &$\C$&     0.2977 &  0.1037   & 47& 3.0067 -     1.0869i  &$\C$&    0.1502  &  0.0822  \\ \hline            
				23&-0.1584 +    0.1218i &$\C$&    0.0342  &  0.1110  &  48& 0.6710  -    0.2000i&$\C$&      0.2161 &  0.2474  \\ \hline          
				24&3.2498 +      3.2320i  &$\C$&   0.0732 & 0.0732   &  49& 2.4138 -    0.1986i  &$\C$&    0.2289  & 0.0944   \\ \hline          
				25&-0.1650 +    0.0499i &$\C$&    0.0658  & 0.0959 & 50 & 0.8721 -   0.0819i &$\C$&    0.0781 &  0.1757  \\ \hline  
			\end{tabular} 
		\end{scriptsize}
	\end{table} 
	
\end{exm}

\begin{exm} \emph{\cite[Example 6.5]{AMM10}}
	\label{exm:13:d:qubit} Given tensor $\cS\in\rS^8\R^2$ with:
	$$ \cS_{1,1,1,1,1,1,1,2} = \frac{0.336}{\sqrt{2}}; \cS_{1,1,2,2,2,2,2,2} = \frac{0.741}{2\sqrt{7}}.$$  
	The polynomial $zv(z)-u(z)$ has degree 42, which has 41 roots, 7 of them are real and the other 34 are complex. One of the real roots $z=0$ has multiplicity 2.  The computational results are shown in Table \ref{table:exm:13:d:qubit}.
	Clearly, $|s_{8}| = |\cS_{2,2,2,2,2,2,2,2}|=0$. For the 41 roots $z$, (\ref{antifixqub1}) fails to hold for 10 roots, so we have $R=\{z_i,~i\in[41]\backslash I\}$, where
	$I=\{ 5,6,11,13,21,22,28,30,36,39\}$ 
	and $R'= R\cap\R$. By using formulas (\ref{specnrmSform}), (\ref{specnrmSform1}), (\ref{realforspecnrm}), (\ref{realforspecnrm1}), we get: 
	\begin{equation*}
	\begin{aligned}
	\|\cS\|_{\sigma~~} &  = \max\{|s_8|, \lambda_q(z), z\in R \} = \max\{|s_8|, \lambda_v(z), z\in R_1 \} = 0.4288,\\
	\|\cS\|_{\sigma,\R} & = \max\{|s_8|,\lambda_q(z),z\in R'\} = \max\{|s_8|,\lambda_v(z),z\in R_1'\} =0.4288.
	\end{aligned}
	\end{equation*}
	
	\begin{table}\caption{Computational results for Example \ref{exm:13:d:qubit}.} \label{table:exm:13:d:qubit}
		\centering
		\begin{scriptsize}
			\begin{tabular}{|c|r|c|c|c||c|r|c|c|c|c|} \hline
				No. &  $z$\quad\quad\quad\quad   & $\F$ & $\lambda_q(z)$ & $\lambda_v(z)$ & No. &  $z$\quad\quad\quad\quad   & $\F$ & $\lambda_q(z)$ & $\lambda_v(z)$ \\ \hline 
				1  &   0.8378            & $\R$& 0.3514 & 0.3514  & 22   &   -0.8187 -   0.0937i&$\C$&     0.2375 & 0.2375 \\ \hline               
				2  &   1.6193                &$\R$&    0.4286 & 0.4286 & 23   &   -1.8094               &$\R$&     0.4021 & 0.4021 \\ \hline            
				3  &   0.2941 +    0.2137i &$\C$&     0.4151& 0.4151 &  24  &   -0.3236 -    0.2351i &$\C$&     0.4288& 0.4288  \\ \hline             
				4  &   0.8992 +    0.6533i&$\C$&   1.3e-16 & 0.0040&  25   &   -0.6778 -    0.4924i &$\C$&     0.3514 & 0.3514\\ \hline             
				5  &   0.7174 +    0.4055i&$\C$&      0.2375& 0.2375   & 26   &   -1.3101 -    0.9518i &$\C$&     0.4286 & 0.4286 \\ \hline           
				6  &   0.6073 +    0.5570i&$\C$&      0.2375& 0.2375   & 27   &   0                    &$\R$&      0   &    0         \\ \hline             
				7  &   1.4638 +     1.0635i&$\C$&      0.4021 & 0.4021   & 28   &   -0.3421 -    0.7497i  &$\C$&    0.2375& 0.2375 
				\\ \hline            
				8  &   0.2589 +    0.7968i &$\C$&     0.3514 & 0.3514  &  29  &   -0.3435 -     1.0571i &$\C$&  3.3e-16 & 0.0045  \\ \hline           
				9  &   0.5004 +     1.5401i &$\C$&     0.4286 & 0.4286   & 30   &   -0.1639 -    0.8076i  &$\C$&    0.2375&  0.2375  \\ \hline          
				10  &   0.1236 +    0.3804i&$\C$&      0.4288  & 0.4288 & 31   &   -0.5591 -     1.7208i  &$\C$&    0.4021 & 0.4021   \\ \hline             
				11   &   -0.1639 +    0.8076i&$\C$&      0.2375  & 0.2375& 32   &   -0.1123 -    0.3457i &$\C$&     0.4151 & 0.4151 \\ \hline              
				12   &   -0.3435 +     1.0571i &$\C$&  3.3e-16 & 0.0045 &  33   &   0.2589 -    0.7968i &$\C$&     0.3514 &  0.3514  \\ \hline             
				13   &   -0.3421 +    0.7497i&$\C$&      0.2375 & 0.2375   & 34   &   0.5004 -     1.5401i  &$\C$&    0.4286 & 0.4286  \\ \hline            
				14   &   -0.5591 +     1.7208i &$\C$&     0.4021 & 0.4021  &  35   &   0.1236 -    0.3804i &$\C$&     0.4288 & 0.4288 \\ \hline              
				15   &   -0.1123 +    0.3457i&$\C$&      0.4151 &  0.4151 & 36   &   0.6073 -    0.5570i &$\C$&     0.2375 & 0.2375   \\ \hline             
				16   &   -0.6778 +    0.4924i &$\C$&     0.3514&  0.3514  &  37   &   0.8992 -    0.6533i &$\C$&  1.3e-16& 0.0040  \\ \hline             
				17   &   -1.3101 +    0.9518i &$\C$&     0.4286& 0.4286    & 38   &   0.2941 -    0.2137i &$\C$&     0.4151&  0.4151  \\ \hline           
				18   &   -0.3236 +    0.2351i&$\C$&      0.4288& 0.4288    & 39  &   0.7174 -    0.4055i &$\C$&     0.2375& 0.2375   \\ \hline         
				19   &   -0.3635               &$\R$&     0.4151 & 0.4151   &  40   &   1.4638 -     1.0635i  &$\C$&    0.4021 & 0.4021   \\ \hline           
				20   &   -1.1115              &$\R$&   1.2e-16 & 0.0040  &  41   &   0.4                &$\R$&    0.4288& 0.4288      \\ \hline             
				21   &   -0.8187 +   0.0937i &$\C$&     0.2375 & 0.2375  & &&&&	\\ \hline  
			\end{tabular} 
		\end{scriptsize}
	\end{table}  
\end{exm}

\begin{exm}
	\label{exm:14:d:qubit}\emph{\cite[Example 6.6]{AMM10}} Given tensor $\cS\in\rS^9\R^2$ with:
	$$ \cS_{1,1,1,1,1,1,1,2,2} = \frac{1}{6\sqrt{2}}; \cS_{1,1,2,2,2,2,2,2,2} = \frac{1}{6\sqrt{2}}.$$  
	The polynomial $zv(z)-u(z)$ has degree 55.  It has 46 roots, 8 of them are real and the other 38 are complex. One of the real roots $z=0$ has multiplicity 10.  The computational results are shown in Table \ref{table:exm:14:d:qubit}. 
	Clearly $|s_{9}| = |\cS_{2,2,2,2,2,2,2,2,2}|=0$. For the 46 roots $z$, (\ref{antifixqub1}) fails to hold for 14 roots, so we have $R= \{z_i,~i\in [46]\backslash I\}$, where
	$I= \{4,5,6,13,14,15,23,24,31,32,33,40,41,42\},$ and $R'= R\cap\R$. By using formulas (\ref{specnrmSform}), (\ref{specnrmSform1}), (\ref{realforspecnrm}), (\ref{realforspecnrm1}), we get: 
	\begin{equation*}
	\begin{aligned}
	\|\cS\|_{\sigma~~} &  = \max\{|s_9|, \lambda_q(z), z\in R \} = \max\{|s_9|, \lambda_v(z), z\in R_1 \} = 0.4127,\\
	\|\cS\|_{\sigma,\R} & = \max\{|s_9|,\lambda_q(z),z\in R'\} = \max\{|s_9|,\lambda_v(z),z\in R_1'\} =0.4127.
	\end{aligned}
	\end{equation*}

	\begin{table}\caption{Computational results for Example \ref{exm:14:d:qubit}.} \label{table:exm:14:d:qubit}
		\centering
		\begin{scriptsize}
			\begin{tabular}{|c|r|c|c|c||c|r|c|c|c|c|} \hline
				No. &  $z$\quad\quad\quad\quad   & $\F$ & $\lambda_q(z)$ & $\lambda_v(z)$ & No. &  $z$\quad\quad\quad\quad   & $\F$ & $\lambda_q(z)$ & $\lambda_v(z)$ \\ \hline 
				1&1   & $\R$&  0.375  &   0.375    &  24&-0.9956 -   0.0943i  &$\C$&    0.2207 & 0.2207 \\    \hline            
				2&1.6862   &$\R$&    0.4127 & 0.4127  &  25&-0.7784               &$\R$&    0.3499 &     0   \\    \hline            
				3&0.4087 +    0.2969i  &$\C$&    0.3765 & 0.3765   & 26&-1.9795     &$\R$&   0.3765  & 0.3765  \\    \hline          
				4&1.0394 +    0.7552i  &$\C$& 5.9e-17 & 0.0057&  27&-0.4798 -    0.3486i  &$\C$&    0.4127 & 0.4127 \\    \hline              
				5&0.8608 +     0.5089i  &$\C$&    0.2207 & 0.2207 & 28&-0.8090 -    0.5878i  &$\C$&      0.375 &  0.375  \\    \hline             
				6&0.7500 +    0.6614i    &$\C$&  0.2207 & 0.2207    & 29&-1.3642 -    0.9911i   &$\C$&   0.4127 & 0.4127  \\    \hline           
				7&0.6297 +    0.4575i  &$\C$&    0.3499 & 0.0001 &  30&-0.1561 -    0.4805i &$\C$&     0.3765 & 0.3765  \\    \hline             
				8&1.6014 +     1.1635i   &$\C$&   0.3765 & 0.3765    & 31&-0.3970 -     1.2219i  &$\C$& 3.3e-17  & 0.0054 \\    \hline          
				9&0.1833 +    0.5640i  &$\C$&    0.4127 & 0.4127   &  32&-0.3973 -    0.9177i &$\C$&     0.2207 & 0.2207  \\    \hline           
				10 & 0.3090 +    0.9511i  &$\C$&      0.375  & 0.375   &  33&-0.2180 -    0.9760i &$\C$&     0.2207 & 0.2207 \\    \hline         
				11& 0.5211 +     1.6037i  &$\C$&    0.4127  & 0.4127 &  34&-0.2405 -    0.7403i &$\C$&     0.3499 & 0.0001  \\    \hline            
				12&-0.1561 +    0.4805i &$\C$&     0.3765 & 0.3765 & 35&-0.6117 -     1.8826i  &$\C$&    0.3765 & 0.3765  \\    \hline              
				13&-0.3970 +     1.2219i   &$\C$& 3.3e-17  & 0.0054  &  36& 0.1833 -    0.5640i  &$\C$&    0.4127 &  0.4127 \\    \hline           
				14&-0.2180 +    0.9760i  &$\C$&    0.2207& 0.2207  &  37&0.3090 -    0.9511i &$\C$&       0.3750&   0.3750   \\    \hline            
				15&-0.3973 +    0.9177i   &$\C$&   0.2207  & 0.2207 &  38&0.5211 -     1.6037i &$\C$&     0.4127 &  0.4127 \\    \hline           
				16&-0.2405 +    0.7403i  &$\C$&    0.3499 & 0.0002&  39& 0.4087 -    0.2969i  &$\C$&    0.3765 & 0.3765     \\    \hline              
				17&-0.6117 +     1.8826i  &$\C$&    0.3765& 0.3765   & 40& 1.0394 -    0.7552i &$\C$&  5.9e-17& 0.0057     \\    \hline            
				18&-0.4798 +    0.3486i   &$\C$&   0.4127& 0.4127   & 41& 0.7500 -    0.6614i   &$\C$&   0.2207& 0.2207      \\    \hline           
				19&-0.8090 +    0.5878i  &$\C$&      0.375 & 0.375    & 42&0.8608 -     0.5089i &$\C$&     0.2207 & 0.2207   \\    \hline          
				20&-1.3642 +    0.9911i   &$\C$&   0.4127 & 0.4127    & 43& 0.6297 -    0.4575i &$\C$&     0.3499 & 0.0001  \\    \hline         
				21&-0.5052                &$\R$&    0.3765& 0.3765   &  44&1.6014 -     1.1635i  &$\C$&    0.3765 &  0.3765    \\    \hline          
				22&-1.2847    &$\R$& 4.4e-30  & 0.0002&  45& 0.5930  &$\R$&     0.4127  &  0.4127  \\    \hline             
				23&-0.9956 +   0.0943i  &$\C$&    0.2207 & 0.2207   & 46&0  &$\R$& 0 &    0    			\\    \hline          
			\end{tabular} 
		\end{scriptsize}
	\end{table} 
\end{exm}

\begin{exm}\emph{\cite[Example 6.7(b)]{AMM10}}
	\label{exm:15:2:d:qubit} Given tensor $\cS\in\rS^{10}\R^2$ with:
	$$ \cS_{1,1,1,1,1,1,1,1,2,2} = \frac{1}{3\sqrt{10}};  \cS_{1,1,2,2,2,2,2,2,2,2} = \frac{1}{3\sqrt{10}}.$$ 
	The polynomial $zv(z)-u(z)$ has degree 71.  It has 61 roots, 7 of them are real and the other 54 are complex. One of the real roots $z=0$ has multiplicity 11. For cleanness of the paper, we will not list all the roots. Clearly $|s_{10}| =0$.
	By using formulas (\ref{specnrmSform}), (\ref{specnrmSform1}), (\ref{realforspecnrm}), (\ref{realforspecnrm1}), we get: 
	\begin{equation*}
	\begin{aligned}
	\|\cS\|_{\sigma~~} & = \max\{|s_{10}|, \lambda_q(z), z\in R  \}  = \max\{|s_{10}|, \lambda_v(z), z\in R_1 \} = 0.3953,\\
	\|\cS\|_{\sigma,\R} &  = \max\{|s_{10}|,\lambda_q(z),z\in R'\} = \max\{|s_{10}|,\lambda_v(z),z\in R_1'\} =0.3953.
	\end{aligned}
	\end{equation*}
	According to \cite{AMM10}, $  \|\cS\|_{\sigma}=\sqrt{\frac{5}{32}}$.

\end{exm} 

\begin{exm} \label{exm:16:2:d:qubit} Given tensor $\cS\in\rS^{11}\R^2$ with:
	$$ \cS_{1,1,1,1,1,1,1,1,1,1,2} = \frac{\sqrt{7}}{5\sqrt{11}};  \cS_{1,1,1,1,1,1,2,2,2,2,2} = \frac{1}{5\sqrt{42}};  \cS_{1,2,2,2,2,2,2,2,2,2,2} =\frac{-\sqrt{7}}{5\sqrt{11}}.$$ 
	The polynomial $zv(z)-u(z)$ has degree 100. It has 100 roots, 8 of them are real and the other 92 are complex. For cleanness of the paper, we will not list all the roots. Clearly, $|s_{11}| =0$. By using formulas (\ref{specnrmSform}), (\ref{specnrmSform1}), (\ref{realforspecnrm}), (\ref{realforspecnrm1}), we get:  
	\begin{equation*}
	\begin{aligned}
	\|\cS\|_{\sigma~~} &  = \max\{|s_{11}|, \lambda_q(z), z\in R  \}  = \max\{|s_{11}|, \lambda_v(z), z\in R_1 \} =0.4125,\\
	\|\cS\|_{\sigma,\R} & = \max\{|s_{11}|,\lambda_q(z),z\in R'\} = \max\{|s_{11}|,\lambda_v(z),z\in R_1'\} =0.4125.
	\end{aligned}
	\end{equation*}
\end{exm}

\begin{exm}\emph{\cite[Example 6.9(b)]{AMM10}}
	\label{exm:17:d:qubit} Given tensor $\cS\in\rS^{12}\R^2$ with:
	$$ \cS_{1,1,1,1,1,1,1,1,1,1,1,2} = \frac{\sqrt{7}}{10\sqrt{3}} ; \cS_{1,1,1,1,1,1,2,2,2,2,2,2} =  \frac{-1}{10\sqrt{21}}  ; \cS_{1,2,2,2,2,2,2,2,2,2,2,2} = \frac{-\sqrt{7}}{10\sqrt{3}} .$$  
	The polynomial $zv(z)-u(z)$ has degree 121. It has 121 roots,  13 of them are real and the other 108 are complex. For cleanness of the paper, we will not list all the roots. Clearly
	$|s_{12}| =0$.  By using formulas (\ref{specnrmSform}), (\ref{specnrmSform1}), (\ref{realforspecnrm}), (\ref{realforspecnrm1}), we get:
	\begin{equation*}
	\begin{aligned}
	\|\cS\|_{\sigma~~} &  = \max\{|s_{12}|, \lambda_q(z), z\in R  \} = \max\{|s_{12}|, \lambda_v(z), z\in R_1 \} =0.3395,\\
	\|\cS\|_{\sigma,\R} & = \max\{|s_{12}|,\lambda_q(z),z\in R'\} = \max\{|s_{12}|,\lambda_v(z),z\in R_1'\} =0.3395.
	\end{aligned}
	\end{equation*}
	According to \cite{AMM10}, $\|\cS\|_{\sigma}=\sqrt{\frac{28}{243}}$.
	
\end{exm}

\bibliographystyle{plain}

\end{document}